\newtheorem{thm}{Theorem}[section]
\newtheorem{prop}[thm]{Proposition}
\newtheorem{lemma}[thm]{Lemma}
\newtheorem{cor}[thm]{Corollary}
\newtheorem{rem}{Remark}
\DeclareMathOperator{\ext}{Ext}
\DeclareMathOperator{\id}{id}
\DeclareMathOperator{\im}{Im}
\begin{document}

\title{The surface of circumferences for Jenkins-Strebel differentials}
\author{Masanori Amano}
\address{School of Management and Information University of Shizuoka 52-1 Yada, Suruga-ku, Shizuoka 422-8526, Japan}
\email{amano.m.ab@u-shizuoka-ken.ac.jp}

\begin{abstract}
There are some existence problems of Jenkins-Strebel differentials on a Riemann surface.
The one of them is to find a Jenkins-Strebel differential whose characteristic ring domains have given positive numbers as their circumferences, for any fixed underlying Riemann surface and core curves of the ring domains.
However, the solution may not exist for some given underlying surface, core curves, and positive numbers.
In this paper, we investigate the existence of such the solutions.
Our method is to use the surface of circumferences, which is determined by the extremal problem for Jenkins-Strebel differentials.
We can see degenerations of the characteristic ring domains of Jenkins-Strebel differentials by the surface.
Moreover, we also consider the behavior of the surface when the underlying Riemann surface varies.
\end{abstract}

\maketitle

\section{Introduction}

A holomorphic quadratic differential $\varphi $ on a Riemann surface $\Sigma $ admits a locally Euclidean structure on $\Sigma $.
The structure is obtained by a local coordinate $z$ of $\Sigma $ such that $\varphi =dz^2$.
Such $z$ also determines a (vertical) trajectory $z=\gamma (t)$ which is an analytic curve and satisfies $\varphi (\gamma (t))\{\gamma '(t)\}^2<0$.

There is a specific holomorphic quadratic differential $\varphi $, called a Jenkins-Strebel differential whose trajectories are either simple closed curves or curves joining critical points of $\varphi $.
By such $\varphi $, the surface $\Sigma $ is decomposed into a finite number of ring domains called the characteristic ring domains of $\varphi $, and the null set which consists of critical points and trajectories joining them.

A ring domain is conformally equivalent to a Euclidean cylinder, and it has three quantities; the circumference, the height, and the ratio of them, called the modulus.
There are three existence problems for Jenkins-Strebel differentials which use these quantities.
Let $\{\gamma _1,\ldots ,\gamma _k\}$ be a family of non-zero, non-peripheral, non-intersecting, and non-homotopic simple closed curves on $\Sigma $.
We call it an admissible curve family.
For any admissible curve family $\{\gamma _1,\ldots ,\gamma _k\}$ on $\Sigma $ and any positive numbers $x_1,\ldots ,x_k$, we consider the existence of a Jenkins-Strebel differential $\varphi $ on $\Sigma $ such that its characteristic ring domains admit $\{\gamma _1,\ldots ,\gamma _k\}$ as their core curves, and have $x_1,\ldots ,x_k$ as the pair of circumferences, heights, or (the ratio of) moduli of the ring domains with respect to the metric $|\varphi |^{\frac{1}{2}}$.
In the case of the heights, such $\varphi $ is always uniquely determined, and this result is called the height problem.
In the case of the moduli, such $\varphi $ also exists, it has the pair of moduli as $\lambda x_1,\ldots ,\lambda x_k$.
The constant $\lambda $ is uniquely determined, and $\varphi $ is also up to positive scalar multiples, this is called the moduli problem.
However, such $\varphi $ may not exist in the case of the circumferences.
Roughly speaking, this happens when some $x_j$ is relatively too larger or smaller than the others.

Instead of the above case of the circumferences, we consider the existence of a family of non-overlapping ring domains $\{R_1,\ldots ,R_k\}$ on $\Sigma $ whose core curves are $\{\gamma _1,\ldots ,\gamma _k\}$ (which allow the degeneration of each the ring domain, that is, $R_j=\varnothing $), such that it maximizes the sum $\sum A_j^2M_j$ where $M_j$ is the modulus of $R_j$, for any given positive numbers $A_1,\ldots ,A_k$.
Such family $\{R_j\}_{j=1}^k$ is always uniquely determined, and there is a Jenkins-Strebel differential $\varphi $ on $\Sigma $ whose norm coincides with the maximum sum, and whose characteristic ring domains coincide with $\{R_j\}_{j=1}^k$ such that each the circumference of $R_j$ is equal to $A_j$ if $R_j$ is non-degenerated.
However, such solution $\varphi $ may have some degenerated ring domains.
This result is called the extremal problem (Theorem \ref{Strebel_ext_prob}).

In 1950-80, Hubbard and Masur (\cite{HubMas76}), Jenkins (\cite{Jenkins57}, \cite{Jenkins58}), Jenkins and Suita (\cite{JenSui74}), Masur (\cite{Masur79}), Renelt (\cite{Renelt76}), and Strebel (\cite{Strebel66}, \cite{Strebel67}, \cite{Strebel76}) solved these the height, the moduli, and the extremal problems.
(Their works about quadratic differentials are summarized in the book of Strebel \cite{Strebel84}.)

In this paper, we investigate the behavior of degenerations of the characteristic ring domains in the extremal problem when the complex structure of the underlying Riemann surface $\Sigma $ varies.

Suppose that $\Gamma $ is an admissible curve family on $\Sigma $.
The space of all pairs of non-negative numbers $(A_1^2,\ldots ,A_k^2)$ such that any such point corresponds to a unit norm Jenkins-Strebel differential on $\Sigma $ whose core curves of the characteristic ring domains are $\Gamma $, is constructed by the extremal problem, and we denote the space by $\mathcal A(\Sigma ,\Gamma )$.
In \S \ref{surface_of_circumferences} of this paper, we define the space and consider its properties.
In the last of the chapter, we show that the space is a smooth convex hyper surface (Theorem \ref{surface_A}), then we call it the surface of circumferences for $\Gamma $ on $\Sigma $.

In next \S \ref{with_Teich}, the surface $\mathcal A(\Sigma ,\Gamma )$ is considered on the Teichm\"uller space.
Suppose that $p=(S,f)$ is a point in the Teichm\"uller space of $\Sigma $, where $S$ is a Riemann surface and $f:\Sigma \rightarrow S$ is a quasiconformal mapping, so that the surface $\mathcal A(S,f(\Gamma ))$ is similarly defined.
We show that $\mathcal A(S,f(\Gamma ))$ is an invariant on the Teichm\"uller space (Theorem \ref{invariant} and Corollary \ref{invariant_corollary}).

Finally, we consider about the surface $\mathcal A(S,f(\Gamma ))$ in the case of $\# \Gamma =2$ in \S \ref{dim2}.
In this situation, we can describe the form of the subsurface of $\mathcal A(S,f(\Gamma ))$ whose the corresponding Jenkins-Strebel differentials have all non-degenerated characteristic ring domains (Theorem \ref{form_of_A}).
Therefore, by all positive scalar multiples of the subsurface, it yields a domain on $\mathbf R_{\geq 0}^2$ whose each point coincides with (the squares of) circumferences of a (not necessarily unit norm) Jenkins-Strebel differential whose ring domains are all non-degenerated.
We also show that the domain tends to the empty set if $p$ tends to a specific point of the boundary of the Teichm\"uller space (Theorem \ref{sequence0}).

\section*{Acknowledgments}

The author is grateful to Hideki Miyachi for his useful and helpful comments and discussions.

\section{Jenkins-Strebel differentials and their properties}

\subsection{Jenkins-Strebel differentials}

Let $\Sigma =\Sigma _{g,n}$ be a Riemann surface of genus $g$ with $n$ punctures such that $3g-3+n>0$.
Let $\varphi $ be a tensor on $\Sigma $ of the form $\varphi =\varphi (z)dz^2$ where $\varphi (z)$ is a holomorphic function for a local coordinate $z$ of $\Sigma $.
We call $\varphi $ a \textit{holomorphic quadratic differential} on $\Sigma $.
This differential has the 2-form $|\varphi |=|\varphi (z)|dxdy$ where $z=x+iy$, and it has the $L^1$-norm $\|\varphi \|=\iint _{\Sigma }|\varphi |$.
The differential has finitely many zeros and poles.
The poles are only at punctures of $\Sigma $, and we call any zero or any pole of $\varphi $ a \textit{critical point} of $\varphi $.
The norm of $\varphi $ is finite if and only if the orders of poles of $\varphi $ are at most $1$.
We only deal with finite norm holomorphic quadratic differentials in the paper except for Theorem \ref{punctures} and in the proof of Proposition \ref{any_vector}.

For any non-zero holomorphic quadratic differential $\varphi $ on $\Sigma $, there exists a local coordinate $\zeta $ such that $\varphi $ is represented by $d\zeta ^2$.
Indeed, for any non-critical point $p_0\in \Sigma $ of $\varphi $, there is a pair $(U,z)$ of a simply connected neighborhood $U$ and a local coordinate $z$ at $p_0$ on $\Sigma $ such that $U$ contains no critical points of $\varphi $, and $\sqrt{\varphi (z)}$ is determined as a holomorphic single valued function on $U$.
Set $z_0=z(p_0)$ and construct
\begin{align*}
\zeta (z)=\int _{z_0}^z\sqrt{\varphi (z)}dz
\end{align*}
for any $z$, then it satisfies $d\zeta ^2=\varphi (z)dz^2$.
We call this $\zeta $ a \textit{$\varphi $-coordinate} or a \textit{natural parameter} of $\varphi $.
The family of all pairs of such neighborhoods and $\varphi $-coordinates $\{(U,\zeta )\}$ can determine an atlas which coincides with the complex structure on $\Sigma $.
The atlas has the locally Euclidean metric $|d\zeta |$ on the corresponding neighborhood, then we call it a \textit{flat structure} on $\Sigma $.

A maximal analytic curve $\gamma (t)$ on $\Sigma $ which satisfies $\varphi (\gamma (t))(d\gamma (t)/dt)^2<0$ is called a \textit{vertical trajectory} of $\varphi $.
We denote by $CT(\varphi )$ the set of all critical points of $\varphi $ and all vertical trajectories of $\varphi $ joining critical points, and call it a \textit{critical graph} of $\varphi $.
Any component of the surface $\Sigma -CT(\varphi )$ is either a ring domain which is surrounded by parallel closed vertical trajectories of $\varphi $, or a domain consists of infinitely many recurrent vertical trajectories of $\varphi $.
If $\Sigma -CT(\varphi )$ only has ring domains, we call $\varphi $ a \textit{Jenkins-Strebel differential}, and each of the ring domains a \textit{characteristic ring domain} of $\varphi $.
In particular, all closed vertical trajectories of a characteristic ring domain of $\varphi $ have a same length with respect to the metric $|\varphi |^{\frac{1}{2}}$.

For any ring domain $R$ on $\Sigma $, we call a simple closed curve on $\Sigma $ whose homotopy class generates a fundamental group of $R$ a \textit{core curve} of $R$.
For any Jenkins-Strebel differential $\varphi $, a characteristic ring domain of $\varphi $ with a core curve $\gamma $ is the unique maximal one among all ring domains whose core curves are homotopic to $\gamma $ such that the ring domains are swept out by closed vertical trajectories of $\varphi $.

Let $\gamma _1,\ldots ,\gamma _k$ be non-trivial, non-peripheral, non-intersect, and non-homotopic simple closed curves on $\Sigma $.
We call the set $\{\gamma _1,\ldots ,\gamma _k\}$ an \textit{admissible curve family} on $\Sigma $.
For example, if $\varphi $ is a Jenkins-Strebel differential on $\Sigma $, the set consists of each one core curve of each characteristic ring domain of $\varphi $ is an admissible curve family on $\Sigma $.
The maximal number of curves in any admissible curve family on $\Sigma $ is $3g-3+n$, and moreover, in the situation, the family is a set of pants curves of $\Sigma $.
Also the number of the characteristic ring domains of a Jenkins-Strebel differential on $\Sigma $ is at most $3g-3+n$.

\begin{rem}
The definition of a Jenkins-Strebel differential $\varphi $ on $\Sigma $ is equivalent to that the set of all non-closed trajectories and all critical points of $\varphi $ covers a null-set on $\Sigma $.
\end{rem}

\subsection{Some notations}

For any simple closed curve $\gamma $ and any ring domain $R$ on $\Sigma $, we call that $R$ is \textit{of type $\gamma $} if and only if a core curve of $R$ is homotopic to $\gamma $.
Let $\Gamma =\{\gamma _1,\ldots ,\gamma _k\}$ be any admissible curve family on $\Sigma $ and $\{R_j\}_{j=1}^k$ any family of ring domains on $\Sigma $ such that we allow some $R_j=\varnothing $ in the family.
We call that $\{R_j\}_{j=1}^k$ is \textit{of type $\Gamma $} if and only if any non-empty $R_j$ is type of $\gamma _j$.
If $R_j=\varnothing $ in the family, we call it a \textit{degenerated ring domain of type $\gamma _j$}.
In addition, we call that a Jenkins-Strebel differential $\varphi $ on $\Sigma $ is \textit{of type $\Gamma $} if and only if the family of the characteristic ring domains of $\varphi $ is of type $\Gamma $.
For such $\varphi $, we also allow that there are some degenerated ring domains.
By the above treatment of degenerated ring domains, if $\Gamma '\subset \Gamma $, we can say that a Jenkins-Strebel differential of type $\Gamma '$ is of type $\Gamma $.

We use some unified symbols for some quantities determined by Jenkins-Strebel differentials.
Let $\varphi $ be a Jenkins-Strebel differential on $\Sigma $ of type $\Gamma =\{\gamma _1,\ldots ,\gamma _k\}$, and $\{R_j\}_{j=1}^k$ the set of the characteristic ring domains (possibly including degenerated ones) of $\varphi $.
Suppose that $j=1,\ldots ,k$.
The symbol $M_j$ assigns to the modulus of $R_j$.
The symbol $a_j$ is
\begin{align*}
a_j=\inf _{\gamma '\sim \gamma _j}\int _{\gamma '}|\varphi |^{\frac{1}{2}},
\end{align*}
where the infimum ranges over all simple closed curves $\gamma '$ on $\Sigma $ homotopic to $\gamma _j$.
The symbol $b_j$ is $b_j=a_jM_j$.
If $R_j$ is non-degenerated, it is represented by a rectangle in the Euclidean plane by a $\varphi $-coordinate.
Any vertical line on the rectangle corresponds to a vertical trajectory of $\varphi $.
In this situation, the quantity $a_j$ means the vertical length of the rectangle, this is also the circumference of $R_j$ with respect to the metric $|\varphi |^{\frac{1}{2}}$.
Similarly, $b_j$ is the horizontal length, and is equal to the height of $R_j$.
The modulus $M_j$ is the ratio $b_j/a_j$.
Even if $R_j$ is degenerated, $a_j$ exists and is still positive because $\varphi \not =0$, but this is no longer the circumference of $R_j$, and the height and the modulus are $b_j=M_j=0$.

By the above notation, the norm of $\varphi $ holds
\begin{align*}
\|\varphi \|=\sum _{j=1}^ka_jb_j=\sum _{j=1}^ka_j^2M_j,
\end{align*}
since $\varphi $-coordinates implies that the norm of $\varphi $ is equal to all the Euclidean areas of $R_1,\ldots ,R_k$.

In this paper, we use these symbols $M$, $a$, and $b$ as the modulus, (possibly) the circumference, and the height of each characteristic ring domain of any Jenkins-Strebel differential, respectively, with sub scripts, or some additional marks.

\subsection{Existence problem for Jenkins-Strebel differentials}

The following two theorems are the most important tools for this paper.

\begin{thm}{(The extremal property \cite[Theorem 20.4]{Strebel84})}\label{Strebel_ext_prop}
Let $\Gamma =\{\gamma _1,\ldots ,\gamma _k\}$ be any admissible curve family on $\Sigma $, and $\varphi $ a Jenkins-Strebel differential on $\Sigma $ of type $\Gamma $.
Let $\{R_j\}_{j=1}^k$ be the family of the characteristic ring domains of type $\Gamma $ of $\varphi $, $a_j=\inf _{\gamma '\sim \gamma _j}\int _{\gamma '}|\varphi |^{\frac{1}{2}}$, and $M_j$ the modulus of $R_j$, for any $j=1,\ldots ,k$.
Suppose that $\{\tilde R_j\}_{j=1}^k$ is any family of (possibly degenerated) non-overlapping ring domains on $\Sigma $ of type $\Gamma $, and denote by $\tilde M_j$ the modulus of $\tilde R_j$ for any $j=1,\ldots ,k$.
Then the inequality 
\begin{align*}
\sum _{j=1}^ka_j^2\tilde M_j\leq \sum _{j=1}^ka_j^2M_j(=\|\varphi \|)
\end{align*}
holds, where the equality holds if and only if $\tilde R_j=R_j$ for any $j=1,\ldots ,k$.
\end{thm}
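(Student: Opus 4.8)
The plan is to establish the inequality by the classical length--area method, taking the flat metric $|\varphi |^{\frac{1}{2}}$ induced by $\varphi $ as the competing metric in the extremal-length characterization of each modulus $\tilde M_j$, and then to read off the equality case from the essential uniqueness of the extremal metric of an annulus combined with the maximality of characteristic ring domains recorded earlier.

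First I would fix $j$ and use that the reciprocal $1/\tilde M_j$ equals the extremal length of the family $\mathcal C_j$ of simple closed curves in $\tilde R_j$ homotopic to its core, so that
\begin{align*}
\frac{1}{\tilde M_j}=\sup _\rho \frac{\left(\inf _{\gamma \in \mathcal C_j}\int _\gamma \rho \,|dz|\right)^2}{\iint _{\tilde R_j}\rho ^2\,dxdy}
\end{align*}
over Borel densities $\rho $. Taking $\rho =|\varphi |^{\frac{1}{2}}$ and observing that every $\gamma \in \mathcal C_j$ is homotopic to $\gamma _j$ on $\Sigma $ (because $\tilde R_j$ is of type $\gamma _j$), the definition of $a_j$ gives $\int _\gamma |\varphi |^{\frac{1}{2}}\geq a_j$; hence the supremum is bounded below by $a_j^2\big/\iint _{\tilde R_j}|\varphi |$, which rearranges to
\begin{align*}
a_j^2\tilde M_j\leq \iint _{\tilde R_j}|\varphi |.
\end{align*}
Summing over $j$, using that the $\tilde R_j$ are non-overlapping so that $\sum _j\iint _{\tilde R_j}|\varphi |\leq \|\varphi \|$, and invoking the identity $\|\varphi \|=\sum _j a_j^2M_j$ recorded above, I would obtain the stated inequality $\sum _j a_j^2\tilde M_j\leq \sum _j a_j^2M_j$.

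It remains to treat the equality case; the reverse implication being trivial, I focus on the forward one. Equality throughout forces equality at each step: the summed area estimate becomes an equality exactly when $\bigcup _j\tilde R_j$ exhausts $\Sigma $ up to a set of $|\varphi |$-measure zero, while each length--area estimate becomes an equality exactly when $|\varphi |^{\frac{1}{2}}$ is the extremal metric of $\tilde R_j$ for $\mathcal C_j$. The hard part will be to convert this latter analytic condition into geometry: by the essential uniqueness of the extremal metric of an annulus, $\tilde R_j$ must be a right cylinder in the $\varphi $-flat structure whose core-homotopic geodesics are closed vertical trajectories of $\varphi $ of common length $a_j$, so that $\tilde R_j$ is swept out by closed vertical trajectories. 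The maximality of the characteristic ring domain $R_j$ among all ring domains of type $\gamma _j$ swept out by closed trajectories then yields $\tilde R_j\subseteq R_j$, and since both families cover $\Sigma $ up to a null set with the same total $|\varphi |$-area, each inclusion must in fact be an equality, giving $\tilde R_j=R_j$ for all $j$. I anticipate that the passage from equality in the length--area inequality to the trajectory structure of $\tilde R_j$, and its clean combination with maximality, will be the principal obstacle.
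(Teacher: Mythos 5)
First, a structural point: the paper never proves this statement --- it is imported verbatim from Strebel's book as \cite[Theorem 20.4]{Strebel84} and used as a black box --- so there is no internal proof to compare against; the comparison below is with the classical argument in the cited source. Your inequality half is correct and is essentially that classical proof: the chain $a_j^2\tilde M_j\leq \iint _{\tilde R_j}|\varphi |$, summation over the non-overlapping $\tilde R_j$, and the identity $\|\varphi \|=\sum _ja_j^2M_j$ recorded in \S 2.2 of the paper is exactly Strebel's length--area argument, merely packaged through the extremal-length characterization of $1/\tilde M_j$ instead of an explicit computation on a conformal parametrization of $\tilde R_j$ by a standard cylinder. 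Also note that each of your per-$j$ estimates really contains two separate inequalities whose equality cases must be extracted separately: extremality of the density $|\varphi |^{\frac{1}{2}}$ for $\tilde R_j$, and $\inf _{\gamma \in \mathcal C_j}\int _\gamma |\varphi |^{\frac{1}{2}}=a_j$.

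The genuine gap is in the equality case, and it sits exactly where you flagged it, but the flag does not fill it. Uniqueness of the extremal metric of an annulus gives only that $(\tilde R_j,|\varphi |^{\frac{1}{2}})$ is an isometric copy of a flat right cylinder whose core circles are closed $|\varphi |^{\frac{1}{2}}$-geodesics of length $a_j$; it cannot give that these circles are \emph{vertical} trajectories of $\varphi $, since verticality is a condition on the direction of the curves relative to $\arg \varphi $, which extremal length does not see. The missing ingredient is the identity
\begin{align*}
a_j=\inf _{\gamma '\sim \gamma _j}\int _{\gamma '}|\varphi ^{\frac{1}{2}}dz|=\inf _{\gamma '\sim \gamma _j}\int _{\gamma '}|\im \{\varphi ^{\frac{1}{2}}dz\}|,
\end{align*}
valid because $\varphi $ is Jenkins--Strebel (this is precisely Remark \ref{converges_problem} of the paper, quoting \cite{FatLauPoe79}). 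Combined with the pointwise inequality $|\im \{\varphi ^{\frac{1}{2}}dz\}|\leq |\varphi ^{\frac{1}{2}}dz|$, with equality only where the tangent direction is vertical, it shows that any closed curve homotopic to $\gamma _j$ whose $|\varphi |^{\frac{1}{2}}$-length equals $a_j$ must be vertical almost everywhere along its length; one then checks that no critical point of $\varphi $ can lie in the interior of $\tilde R_j$ (a critical point trapped between two closed vertical trajectories of the same homotopy class is impossible), so every core circle of $\tilde R_j$ is a genuine closed vertical trajectory. Only after this step do the maximality of the characteristic ring domain and your closing area count apply --- and they do then finish the proof: $\tilde R_j\subseteq R_j$, both open, with $\sum _j\iint _{\tilde R_j}|\varphi |=\sum _j\iint _{R_j}|\varphi |=\|\varphi \|$, forces $\tilde R_j=R_j$ for every $j$. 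So your skeleton is the right one; to complete it you must supply the $\im $-characterization of $a_j$ and the verticality deduction, neither of which follows from extremal-metric uniqueness alone.
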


\begin{thm}{(The extremal problem, \cite[Theorem 21.10]{Strebel84})}\label{Strebel_ext_prob}
Let $\Gamma =\{\gamma _1,\ldots ,\gamma _k\}$ be any admissible curve family on $\Sigma $ and $A_1,\ldots ,A_k$ any positive numbers.
For any family of (possibly degenerated) non-overlapping ring domains $\{\tilde R_j\}_{j=1}^k$ on $\Sigma $ of type $\Gamma $, let $\tilde M_j$ be the modulus of $\tilde R_j$ for any $j=1,\ldots ,k$ and consider the sum $\sum _{j=1}^kA_j^2\tilde M_j$.
Then, there exists ring domains $\{R_j\}_{j=1}^k$ which maximize the sum, and they are the characteristic ones of a Jenkins-Strebel differential $\varphi $ on $\Sigma $ of type $\Gamma $.
Such $\varphi $ is uniquely determined.
Moreover, for the modulus $M_j$ of $R_j$, if $M_j>0$ then $A_j=a_j$ $(=\inf _{\gamma '\sim \gamma _j}\int _{\gamma '}|\varphi |^{\frac{1}{2}})$, and if $M_j=0$ then $A_j\leq a_j$.
\end{thm}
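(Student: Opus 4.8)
The plan is to recognise the maximisation as one half of a dual pair of extremal problems and to show that a single Jenkins--Strebel differential solves both. For a Borel conformal metric $\rho=\rho(z)\,|dz|$ on $\Sigma$ put $L_j(\rho)=\inf_{\gamma'\sim\gamma_j}\int_{\gamma'}\rho\,|dz|$ and $\mathcal E(\rho)=\iint_\Sigma\rho^2\,dx\,dy$. First I would record the length--area inequality: realising a non-degenerate $\tilde R_j$ of modulus $\tilde M_j$ as a Euclidean cylinder of circumference $1$ and height $\tilde M_j$ and applying Cauchy--Schwarz to its closed slices yields $\tilde M_j\,L_j(\rho)^2\le\mathcal E_{\tilde R_j}(\rho)$, the $\rho$-area of $\tilde R_j$. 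Summing over the disjoint $\tilde R_j$ shows that for every $\rho$ with $L_j(\rho)\ge A_j$ for all $j$,
\begin{align*}
\sum_{j=1}^k A_j^2\,\tilde M_j\le\sum_{j=1}^k L_j(\rho)^2\,\tilde M_j\le\mathcal E(\rho).
\end{align*}
Hence the supremum $V$ of $\sum_j A_j^2\tilde M_j$ over admissible families is finite and dominated by the infimum of $\mathcal E(\rho)$ over metrics meeting the length constraints (weak duality).

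Next I would produce a common extremal object. A minimising sequence for $\mathcal E$ subject to $L_j\ge A_j$ is bounded in $L^2$ and yields an extremal metric $\rho_0$. The decisive step --- and the one I expect to be the main obstacle --- is to show that $\rho_0=|\varphi|^{\frac{1}{2}}$ for a holomorphic quadratic differential $\varphi$: this is the analytic heart of the method of the extremal metric, where a first-variation (length--area) argument along area-admissible deformations of $\rho_0$ forces $\rho_0$ to be a flat metric with trivial holonomy off finitely many cone points, equivalently $\rho_0=|\varphi|^{\frac{1}{2}}$ with $\varphi$ holomorphic. Its closed trajectories then sweep out ring domains of type $\Gamma$, so $\varphi$ is a Jenkins--Strebel differential of type $\Gamma$ with characteristic ring domains $\{R_j\}_{j=1}^k$ and $\mathcal E(\rho_0)=\|\varphi\|=\sum_j a_j^2 M_j$.

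The circumference relations are then the complementary-slackness conditions of the constrained minimisation. The Euler--Lagrange analysis of minimising $\mathcal E$ under $L_j\ge A_j$ shows that on each characteristic ring domain carrying positive area, that is whenever $M_j>0$, the corresponding constraint is active, so the closed $\rho_0$-trajectories there have length exactly $A_j$, i.e. $a_j=A_j$; on the degenerate directions the constraint may be slack, leaving only $a_j\ge A_j$. With these identities,
\begin{align*}
\sum_{j=1}^k A_j^2 M_j=\sum_{j=1}^k a_j^2 M_j=\|\varphi\|=\mathcal E(\rho_0),
\end{align*}
which by the first step equals $V$; thus $\{R_j\}$ indeed maximises the sum, establishing existence together with the asserted dichotomy.

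Finally, uniqueness is where I would invoke Theorem \ref{Strebel_ext_prop}. If $\psi$ is any Jenkins--Strebel differential of type $\Gamma$ whose characteristic ring domains $\{\tilde R_j\}$ also attain $V$, then from $a_j\ge A_j$ for all $j$ one gets $\sum_j A_j^2\tilde M_j\le\sum_j a_j^2\tilde M_j$, while the extremal property of $\varphi$ gives $\sum_j a_j^2\tilde M_j\le\sum_j a_j^2 M_j=V$. Since both ends equal $V$, equality holds throughout, and the equality clause of Theorem \ref{Strebel_ext_prop} forces $\tilde R_j=R_j$ for every $j$. A Jenkins--Strebel differential with prescribed characteristic ring domains is determined up to a positive scalar --- the trajectory structure fixes it on each cylinder up to a real factor, and since $\psi/\varphi$ is holomorphic on the connected complement of the zeros of $\varphi$ and real on each $R_j$, it is a global constant --- and the normalisation $\|\varphi\|=V$, equivalently $a_j=A_j$ on the non-degenerate domains, pins this scalar, yielding $\psi=\varphi$.
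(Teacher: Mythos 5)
First, a point of comparison: the paper does not prove this statement at all --- it is quoted as Strebel's result \cite[Theorem 21.10]{Strebel84} and used as a black box --- so your attempt has to stand entirely on its own. The architecture you chose is the classical one of Jenkins' method of the extremal metric: the length--area (weak duality) inequality $\sum_{j}A_j^2\tilde M_j\le \mathcal E(\rho)$ for every metric with $L_j(\rho)\ge A_j$ is correctly derived, and your uniqueness argument is essentially sound (the equality clause of Theorem \ref{Strebel_ext_prop}, the normal form $c\,(dw/w)^2$ for a quadratic differential on an annulus foliated by closed trajectories, the identity theorem to make the ratio $\psi/\varphi$ a global constant, and the normalization $a_j=A_j$ on non-degenerate rings to pin the constant). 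This is a genuinely different route from Strebel's, whose proof rests on compactness of unit-norm Jenkins--Strebel differentials of type $\Gamma$ together with the extremal property, rather than on a dual minimization over metrics.

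The difficulty is that the two steps carrying all of the content of the theorem are asserted rather than proved. (i) You state that a first-variation argument ``forces'' the minimizing metric to satisfy $\rho_0=|\varphi|^{\frac{1}{2}}$ with $\varphi$ holomorphic, and that ``its closed trajectories then sweep out ring domains of type $\Gamma$.'' This is precisely Jenkins' theorem; nothing in your text establishes holomorphy of $\varphi$, rules out that $\varphi$ has minimal components with recurrent trajectories, or shows that its ring domains are homotopic exactly to the curves of $\Gamma$ and to no others. (ii) The complementary-slackness claim ($M_j>0\Rightarrow a_j=A_j$) is attributed to an unspecified Euler--Lagrange analysis, but the obvious perturbation --- shrink $\rho_0$ on the cylinder $R_j$ --- does not work, because curves homotopic to $\gamma_i$, $i\ne j$, may cross $R_j$, so the perturbed metric can violate the constraints $L_i\ge A_i$; an actual argument is needed. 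Without (i) and (ii) the chain $V\le \mathcal E(\rho_0)=\sum_j A_j^2M_j\le V$ collapses, and with them the theorem is already proved, so the proposal defers rather than supplies the proof. There is also a smaller unaddressed point in the direct-method step: curve-length functionals $\rho\mapsto\int_{\gamma'}\rho$ are not continuous with respect to weak $L^2$ convergence in area (a curve has zero area), so passing the constraints $L_j\ge A_j$ to the limit of a minimizing sequence requires Fuglede-type ``almost every curve'' arguments, not just $L^2$-boundedness.
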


In the statement of Theorem \ref{Strebel_ext_prob}, the norm of $\varphi $ is also represented by
\begin{align*}
\|\varphi \|=\sum _{j=1}^ka_j^2M_j=\sum _{j=1}^kA_j^2M_j.
\end{align*}

\section{The surface of circumferences}\label{surface_of_circumferences}

Let $\Gamma $ be an admissible curve family on $\Sigma $.
We denote by $JS(\Gamma )$ the set of all Jenkins-Strebel differentials on $\Sigma $ of type $\Gamma $ with the $0$-quadratic differential.
Let $\mathbf R_{\geq 0}^k$ be the set of all pairs of $k$ non-negative real numbers, and $\mathbf o$ the origin $(0,\ldots ,0)$ in $\mathbf R_{\geq 0}^k$.

\subsection{The mapping $F$}

Let us fix an admissible curve family $\Gamma =\{\gamma _1,\ldots ,\gamma _k\}$ on $\Sigma $.
For any $(A_1,\ldots ,A_k)\in \mathbf R_{\geq 0}^k$, there exists a unique Jenkins-Strebel differential $\varphi \in JS(\Gamma )$ which satisfies the condition of Theorem \ref{Strebel_ext_prob}.
If some entries of $(A_1,\ldots ,A_k)$ are $0$, we omit the corresponding curves in $\Gamma $ and apply Theorem \ref{Strebel_ext_prob}.
Then the mapping $F:\mathbf R_{\geq 0}^k\rightarrow JS(\Gamma )$ is defined by the above correspondence with the special case $F(\mathbf o)=0$.

\begin{rem}\label{0}
We have that $(A_1,\ldots ,A_k)=\mathbf{o}$ if and only if $F(A_1,\ldots ,A_k)=0$.
If $F(A_1,\ldots ,A_k)=0$, every $a_j=\inf _{\gamma '\sim \gamma _j}\int _{\gamma '}|0|$ is immediately $0$, and by Theorem \ref{Strebel_ext_prob}, $A_j\leq a_j$, then $(A_1,\ldots ,A_k)=\mathbf{o}$.
The converse is by definition.
\end{rem}

\begin{rem}
Let $\varphi =F(A_1,\ldots ,A_k)$ and $\lambda >0$, then $\lambda \varphi =F(\lambda ^{\frac{1}{2}}A_1,\ldots ,\lambda ^{\frac{1}{2}}A_k)$.
This is also obtained by Theorem \ref{Strebel_ext_prob}.
Let $\{\tilde R_j\}_{j=1}^k$ be any family of non-overlapping ring domains of type $\Gamma $, and $\tilde M_j$ the modulus of $\tilde R_j$ for any $j=1,\ldots ,k$.
Then the maximum of the sum $\sum _{j=1}^k(\lambda ^{\frac{1}{2}}A_j)^2\tilde M_j$ is $\|\lambda \varphi \|$ since the maximum of $\sum _{j=1}^kA_j^2\tilde M_j$ is $\|\varphi \|$.
The uniqueness of $\lambda \varphi $ yields the desired result.
\end{rem}

We now check some properties for $F$.
As the topology of the set of all holomorphic quadratic differentials on $\Sigma $, we induce the locally uniformly convergence topology.
The topology of $JS(\Gamma )$ is also determined by it.

\begin{thm}\label{F_is_continuous}
The mapping $F$ is continuous.
\end{thm}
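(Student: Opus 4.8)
The plan is to prove sequential continuity and then run a compactness-plus-uniqueness argument, concluding by the uniqueness clause of Theorem \ref{Strebel_ext_prob}. Both the source $\mathbf R_{\geq 0}^k$ and the target $JS(\Gamma)$ are metrizable, the latter because the space $Q(\Sigma)$ of integrable holomorphic quadratic differentials on $\Sigma$ is finite dimensional, so on it the locally uniform topology coincides with that of the $L^1$-norm. Hence I fix a sequence $A^{(n)}\to A$ in $\mathbf R_{\geq 0}^k$, write $\varphi _n=F(A^{(n)})$ and $\varphi =F(A)$, and aim to show $\varphi _n\to \varphi $. The first step is to bound the norms locally: writing $M_j^{(n)}$ for the modulus of the $j$-th characteristic ring domain of $\varphi _n$, each modulus of a ring domain of type $\gamma _j$ is bounded above by the finite number $M_j^{\max }:=\sup _R\Mod (R)$ (the supremum over ring domains $R$ of type $\gamma _j$, finite since $\gamma _j$ is essential and $\Sigma $ has finite type), so Theorem \ref{Strebel_ext_prob} gives
\[
\|\varphi _n\|=\sum _{j=1}^k(A_j^{(n)})^2M_j^{(n)}\leq \sum _{j=1}^k(A_j^{(n)})^2M_j^{\max },
\]
which stays bounded as $A^{(n)}\to A$. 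Since $Q(\Sigma )$ is finite dimensional, the sequence $\{\varphi _n\}$ is then relatively compact, so it has locally uniformly convergent subsequences; it suffices to show that every subsequential limit $\psi $ equals $\varphi $, after which the standard subsequence trick upgrades this to $\varphi _n\to \varphi $.

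Next I would pin down the limiting norm. Setting $H(A)=\|F(A)\|$, Theorem \ref{Strebel_ext_prob} identifies $H(A)$ with the maximum of $\sum _jA_j^2\tilde M_j$ over all non-overlapping families $\{\tilde R_j\}$ of type $\Gamma $. As the modulus vectors $(\tilde M_1,\ldots ,\tilde M_k)$ range over a bounded subset $\mathcal M\subset \mathbf R_{\geq 0}^k$, the value $H(A)$ is exactly the support function of $\mathcal M$ evaluated at $(A_1^2,\ldots ,A_k^2)$, hence continuous in $A$. Since $L^1$-convergence yields $\|\varphi _n\|\to \|\psi \|$, and $\|\varphi _n\|=H(A^{(n)})\to H(A)=\|\varphi \|$, I conclude $\|\psi \|=\|\varphi \|$.

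The heart of the argument, and the step I expect to be the \emph{main obstacle}, is to show that the limit $\psi $ is itself a Jenkins--Strebel differential of type $\Gamma $ (with possibly some degenerate ring domains) and that it solves the extremal problem for $A$. For the structural point I would use that $JS(\Gamma )$ is closed in $Q(\Sigma )$: under the Hubbard--Masur homeomorphism \cite{HubMas76} between $Q(\Sigma )$ and the space of measured foliations, $JS(\Gamma )$ corresponds to the closed cone of foliations supported on the curves of $\Gamma $ (the foliation attached to a differential in $JS(\Gamma )$ being the weighting $\sum _jb_j\gamma _j$ by its heights), so a locally uniform limit of elements of $JS(\Gamma )$ remains in $JS(\Gamma )$; alternatively one may invoke the continuity and properness of the height parametrization. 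Writing $\tilde a_j,\tilde M_j$ for the circumference and modulus data of $\psi $, I would then pass the extremal relations of Theorem \ref{Strebel_ext_prob} to the limit: using the continuity of the functional $\varphi \mapsto a_j(\varphi )=\inf _{\gamma '\sim \gamma _j}\int _{\gamma '}|\varphi |^{\frac 12}$ together with lower semicontinuity of moduli (so that $\tilde M_j>0$ forces $M_j^{(n)}>0$, whence $A_j^{(n)}=a_j^{(n)}$, for large $n$), one gets $A_j=\tilde a_j$ whenever $\tilde M_j>0$. Therefore the characteristic ring domains of $\psi $ realize
\[
\sum _{j=1}^kA_j^2\tilde M_j=\sum _{j=1}^k\tilde a_j^2\tilde M_j=\|\psi \|=\|\varphi \|=H(A),
\]
that is, they attain the maximum in the extremal problem for $A$. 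By the uniqueness clause of Theorem \ref{Strebel_ext_prob}, $\psi =\varphi $, completing the subsequence argument and hence the proof. I anticipate that verifying closedness of $JS(\Gamma )$ and the two limiting properties of $a_j$ and of the moduli will be the only genuinely delicate points; the remaining estimates are routine consequences of finite dimensionality and of the support-function description of $H$.
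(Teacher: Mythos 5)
Your proposal is correct, and although it shares the paper's overall skeleton --- extract a locally uniformly convergent subsequence, identify the limit with $F(A)$ via an extremality/uniqueness argument, then upgrade by metrizability --- several of your key ingredients are genuinely different. The paper obtains compactness by normalizing to unit norm and invoking Strebel's compactness of $JS_0(\Gamma )$ (Lemma \ref{compact}); you instead use finite-dimensionality of the space of integrable holomorphic quadratic differentials together with the a priori bound $\|\varphi _n\|\leq \sum _j(A_j^{(n)})^2M_j^{\max }$, where finiteness of $M_j^{\max }$ is positivity of the extremal length of an essential curve. Your support-function observation --- that $A\mapsto \|F(A)\|$ is the support function of the bounded set of attainable modulus vectors evaluated at $(A_1^2,\ldots ,A_k^2)$, hence convex and continuous --- is a clean ingredient absent from the paper: it yields norm continuity \emph{a priori}, which the paper only extracts a posteriori inside its inequality chain, and it lets you handle $A=\mathbf o$ uniformly, where the paper needs a separate contradiction argument. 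For identifying the subsequential limit $\psi $, you verify that its ring domains attain the maximum and satisfy the circumference conditions and then appeal to the uniqueness clause of Theorem \ref{Strebel_ext_prob}; the paper instead runs an inequality chain through Theorem \ref{Strebel_ext_prop} and uses its equality case to get the ring domains to coincide, then pins down the scalar by the norm --- essentially equivalent content, different mechanics (and note your compressed final step tacitly uses, as the paper does, that equal ring domains force proportionality, with the scalar fixed by your norm identity $\|\psi \|=\|\varphi \|$). Finally, the three ``delicate points'' you flag --- closedness of $JS(\Gamma )$ under locally uniform limits, continuity of $a_j$, and (lower semi)continuity of the moduli --- are exactly the content of the paper's Lemma \ref{converges} and Remark \ref{converges_problem} (Strebel \cite{Strebel84}, Corollary 21.2 and Theorem 24.7), so they are directly citable and your Hubbard--Masur detour for closedness, while valid, is heavier machinery than needed.
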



This theorem is implied by the following lemmas.
The readers can see \cite{Strebel84} for more details.
Let $JS_0(\Gamma )$ be the set of all elements in $JS(\Gamma )$ with norm $1$.

\begin{lemma}{(\cite[Theorem 21.2]{Strebel84})}\label{compact}
The space $JS_0(\Gamma )$ is compact.
\end{lemma}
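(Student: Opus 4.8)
The plan is to prove sequential compactness by realizing $JS_0(\Gamma)$ as a closed subset of a compact sphere. First I would record the ambient finiteness: since $\Sigma=\Sigma_{g,n}$ satisfies $3g-3+n>0$, it is classical (Riemann--Roch) that the space $Q(\Sigma)$ of integrable holomorphic quadratic differentials on $\Sigma$ is a finite-dimensional vector space, and on such a space the locally uniform topology coincides with the $L^1$-norm topology. Indeed, fixing a basis $\psi_1,\ldots,\psi_d$ and evaluating at suitably generic points, locally uniform convergence forces convergence of the finitely many coefficients, hence convergence in every norm; in particular $\|\cdot\|$ is continuous for the locally uniform topology, and the unit sphere $\{\varphi\in Q(\Sigma):\|\varphi\|=1\}$ is compact. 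Thus it suffices to show that $JS_0(\Gamma)$ is \emph{closed} in this sphere.

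So I would take $\varphi_n\in JS_0(\Gamma)$ with $\varphi_n\to\varphi$ locally uniformly. By the first paragraph $\varphi\in Q(\Sigma)$ and $\|\varphi\|=\lim_n\|\varphi_n\|=1$, so in particular $\varphi\neq 0$; it remains only to prove that $\varphi$ is a Jenkins--Strebel differential of type $\Gamma$, i.e. that every component of $\Sigma-CT(\varphi)$ is a ring domain homotopic to some $\gamma_j$. To prepare for this, note that $|\varphi_n|^{1/2}\to|\varphi|^{1/2}$ locally uniformly, and that the flat length $a_j(\psi)=\inf_{\gamma'\sim\gamma_j}\int_{\gamma'}|\psi|^{1/2}$ is continuous on $Q(\Sigma)$ (upper semicontinuity is immediate as an infimum of continuous functionals; lower semicontinuity follows from an Arzel\`a--Ascoli argument for the $\gamma_j$-geodesics, using that the limit metric is non-degenerate), so $a_j^{(n)}\to a_j(\varphi)>0$. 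Writing $c_j^{(n)}=a_j^{(n)}b_j^{(n)}$ for the $\varphi_n$-area of the $j$-th characteristic cylinder $R_j^{(n)}$, we have $c_j^{(n)}\in[0,1]$ and $\sum_{j=1}^k c_j^{(n)}=\|\varphi_n\|=1$, so after a further subsequence $c_j^{(n)}\to c_j\geq 0$ with $\sum_j c_j=1$.

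The crux is then a conservation-of-area argument. For each $j$ with $c_j>0$, the bound $a_j^{(n)}\to a_j(\varphi)>0$ keeps the circumferences and heights $b_j^{(n)}=c_j^{(n)}/a_j^{(n)}$ in a compact positive range, so the cylinders $R_j^{(n)}$ have bounded geometry; passing to the limit of their natural parameters produces an embedded cylinder $R_j\subset\Sigma$ of type $\gamma_j$ swept by closed $\varphi$-trajectories, with $\varphi$-area $c_j$. The $R_j$ are pairwise non-overlapping as limits of disjoint cylinders, and their total $\varphi$-area is $\sum_j c_j=1=\|\varphi\|$. Hence $\Sigma-\bigcup_j\overline{R_j}$ is a $\varphi$-null set, which precludes both a recurrent (minimal) component of $\varphi$ and any cylinder of $\varphi$ outside the homotopy classes $\gamma_1,\ldots,\gamma_k$. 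Therefore $\varphi\in JS(\Gamma)$, and with $\|\varphi\|=1$ we get $\varphi\in JS_0(\Gamma)$, proving closedness. (The extremal property of Theorem \ref{Strebel_ext_prop} can be invoked to confirm that the limit cylinders $R_j$ are the \emph{characteristic}, i.e.\ maximal, ring domains of $\varphi$ and to rule out any double counting of area.)

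I expect the main obstacle to be precisely this last step: controlling the trajectory structure in the limit so as to forbid the emergence of a recurrent component or of spurious cylinders in other homotopy classes. The two delicate points are (i) the geometric convergence of the characteristic cylinders $R_j^{(n)}$ — that they neither collapse nor escape toward the punctures — and (ii) the absence of any $\varphi$-area ``leaking'' into the complement of the limit cylinders. Both hinge on the conservation of total area $\|\varphi_n\|\to\|\varphi\|=1$, which finite-dimensionality supplies for free, together with the continuity of $\psi\mapsto a_j(\psi)$; isolating and justifying that continuity (equivalently, the lower semicontinuity of the moduli $M_j$ for the classes $\gamma_j$) is the technical heart of the argument.
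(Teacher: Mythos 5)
The paper itself does not prove this lemma: it is imported verbatim as \cite[Theorem~21.2]{Strebel84}, so your attempt can only be measured against the standard (Strebel's) argument. The first half of your proposal is correct and coincides with how that argument begins: $Q(\Sigma)$ is finite-dimensional, so the locally uniform topology agrees with the $L^1$-norm topology, the unit sphere is compact, and the lemma reduces to showing that $JS_0(\Gamma)$ is \emph{closed}, i.e.\ that a locally uniform limit $\varphi$ of unit-norm Jenkins--Strebel differentials of type $\Gamma$ is again one. Note that in the paper's own framework this closedness statement is exactly Lemma \ref{converges} (Strebel's Corollary 21.2), also quoted without proof --- which already signals that this is where the entire substance of the lemma lies.

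That substance is what your proposal asserts rather than proves. The sentence ``passing to the limit of their natural parameters produces an embedded cylinder $R_j\subset\Sigma$ of type $\gamma_j$ swept by closed $\varphi$-trajectories, with $\varphi$-area $c_j$'' contains three claims, each of which is precisely what can fail a priori: (i) limits of closed $\varphi_n$-trajectories need not be closed $\varphi$-trajectories --- they can degenerate onto the critical graph of $\varphi$ as concatenations of saddle connections; (ii) the limit of the isometric embeddings $S^1\times(0,b_j^{(n)})\hookrightarrow\Sigma$ is only $1$-Lipschitz and path-length preserving, not a priori injective, and a non-injective (folded) image has $\varphi$-area strictly smaller than $c_j$; (iii) consequently the identity $\sum_j\operatorname{area}(R_j)=1=\|\varphi\|$, on which your exclusion of minimal components and of spurious cylinders entirely rests, is unjustified --- finite-dimensionality gives $\|\varphi_n\|\to\|\varphi\|$, but not that the limit cylinders capture that area, which is the actual content of ``conservation of area.'' Your parenthetical appeal to Theorem \ref{Strebel_ext_prop} is moreover circular: that theorem presupposes that the differential to which it is applied is already Jenkins--Strebel of type $\Gamma$, which is exactly what is being proved for $\varphi$; what one really needs is the length--area (Jenkins) inequality, with its equality case, for \emph{arbitrary} finite-norm differentials, a statement stronger than anything quoted in the paper. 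Finally, the continuity of $a_j$ (lower semicontinuity in particular) is itself a nontrivial theorem (cf.\ Remark \ref{converges_problem} and \cite[Theorem~24.7]{Strebel84}), not a short Arzel\`a--Ascoli exercise, since minimizing curves can dip into the cusps or collapse onto the critical graph. A completion that stays within the paper's toolkit would bypass geometric limits altogether: pass to the limit in the inequality of Theorem \ref{Strebel_ext_prop} applied to each $\varphi_n$ against a fixed competing ring-domain system, apply Theorem \ref{Strebel_ext_prob} with $A_j=\lim_n a_j(\varphi_n)$, test its maximizer against the characteristic domains of $\varphi_n$, and conclude via the equality case of the length--area inequality.
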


\begin{lemma}{(\cite[Corollary 21.2]{Strebel84})}\label{converges}
Suppose that a sequence $\{\varphi _n\}_{n\in \mathbf N}$ in $JS(\Gamma )$ converges locally uniformly to $\varphi $.
Then $\varphi \in JS(\Gamma )$, and furthermore, the modulus $M_{jn}$, the quantity $a_{jn}=\inf _{\gamma '\sim \gamma _j}\int _{\gamma '}|\varphi _n|^{\frac{1}{2}}$, and the height $b_{jn}=a_{jn}M_{jn}$ determined by $\varphi _n$ also converge to the corresponding $M_j$, $a_j$, and $b_j$ by $\varphi $, respectively, for any $j=1,\ldots ,k$.
The sequence $\{\varphi _n\}_{n\in \mathbf N}$ also converges in norm.
\end{lemma}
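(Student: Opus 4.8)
The plan is to combine three ingredients: that a locally uniform limit of holomorphic quadratic differentials is again holomorphic, a uniform bound on the moduli coming from extremal length, and the length-area inequality, which I will use to \emph{squeeze} the limiting data between two a priori equal quantities. Throughout I write $R_{jn}$ for the characteristic ring domain of $\varphi_n$ of type $\gamma_j$ and $\|\varphi_n\|_{R_{jn}}=\iint_{R_{jn}}|\varphi_n|$ for its $\varphi_n$-mass, so that $\|\varphi_n\|=\sum_j a_{jn}^2M_{jn}$. First I would record the elementary facts. Since $\varphi_n\to\varphi$ locally uniformly, the Weierstrass theorem gives that $\varphi$ is a holomorphic quadratic differential. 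Because each $\gamma_j$ is essential on the finite-type surface $\Sigma$, its extremal length $\ext_\Sigma([\gamma_j])$ is positive (bound it below by $\ell_{\mathrm{hyp}}(\gamma_j)^2/\mathrm{Area}_{\mathrm{hyp}}(\Sigma)$ using the hyperbolic metric), and the maximal modulus of an embedded annulus of type $\gamma_j$ equals $1/\ext_\Sigma([\gamma_j])=:C_j<\infty$; hence $M_{jn}\le C_j$ for all $n$. Fixing one representative $\gamma_j^0\sim\gamma_j$ and using uniform convergence on the compact set $\gamma_j^0$ gives $a_{jn}\le\int_{\gamma_j^0}|\varphi_n|^{\frac12}\to\int_{\gamma_j^0}|\varphi|^{\frac12}<\infty$, so the $a_{jn}$ are bounded, $\|\varphi_n\|=\sum_j a_{jn}^2M_{jn}$ is bounded, and Fatou's lemma yields $\|\varphi\|\le\liminf_n\|\varphi_n\|<\infty$. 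The same representative argument gives the upper semicontinuity $\limsup_n a_{jn}\le\int_{\gamma'}|\varphi|^{\frac12}$ for every $\gamma'\sim\gamma_j$, hence $\limsup_n a_{jn}\le a_j$.

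The core of the proof is a squeeze. Passing to a subsequence, assume $a_{jn}\to\alpha_j$ and $M_{jn}\to\mu_j$; since the subsequential limits will turn out to be determined by $\varphi$, this suffices for the full convergence. Passing to a further subsequence, the annuli $R_{jn}$ converge in the sense of Carath\'eodory kernels to non-overlapping ring domains $R_j^\infty$ of type $\gamma_j$ (some possibly degenerate), along which the modulus is continuous, so $\mathrm{mod}(R_j^\infty)=\mu_j$. The length-area inequality $M\,(\inf_{\gamma}\int_\gamma|\varphi|^{\frac12})^2\le\iint_R|\varphi|$ for an annulus $R$ of modulus $M$, applied to $\varphi$ on $R_j^\infty$, together with the fact that a core curve of $R_j^\infty$ is homotopic to $\gamma_j$ in $\Sigma$ (so the inner infimum is at least $a_j$), gives $a_j^2\,\mu_j\le\|\varphi\|_{R_j^\infty}$; summing over the non-overlapping $R_j^\infty$ yields $\sum_j a_j^2\mu_j\le\|\varphi\|$. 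Combining this with $\alpha_j\le a_j$ and with Fatou, I obtain the chain
\begin{align*}
\|\varphi\|\le\liminf_n\sum_j a_{jn}^2M_{jn}=\sum_j\alpha_j^2\mu_j\le\sum_j a_j^2\mu_j\le\|\varphi\|,
\end{align*}
so every inequality is an equality.

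From the equalities I would read off the conclusions. The identity $\sum_j\|\varphi\|_{R_j^\infty}=\|\varphi\|$ shows that $\varphi$ carries no mass outside $\bigcup_j R_j^\infty$, while equality in each length-area estimate forces $|\varphi|^{\frac12}$ to be the flat cylinder metric on $R_j^\infty$ whenever $\mu_j>0$; thus every regular trajectory of $\varphi$ is a closed curve lying in some $R_j^\infty$, so $\varphi$ is Jenkins-Strebel with characteristic ring domains exactly the non-degenerate $R_j^\infty$, and $\varphi\in JS(\Gamma)$ with $M_j=\mu_j=\lim_n M_{jn}$. The equality $\sum\alpha_j^2\mu_j=\sum a_j^2\mu_j$ with $\alpha_j\le a_j$ gives $\alpha_j=a_j$ whenever $\mu_j>0$, i.e. $a_{jn}\to a_j$ on the non-degenerate indices; and $b_{jn}=a_{jn}M_{jn}\to a_jM_j=b_j$ for every $j$, since on the degenerate indices $M_{jn}\to0$ while $a_{jn}$ stays bounded. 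Norm convergence $\|\varphi_n\|\to\|\varphi\|$ is the first equality above. The totally degenerate case $\varphi=0$ is separate but easy: upper semicontinuity forces $a_{jn}\to0=a_j$, whence $\|\varphi_n\|=\sum_j a_{jn}^2M_{jn}\le\sum_j C_j\,a_{jn}^2\to0$ and $b_{jn}\to0$.

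I expect the main obstacle to be the lower semicontinuity of the circumferences on the degenerate indices, that is, proving $a_{jn}\to a_j$ when $M_j=0$, since the squeeze only controls $a_{jn}$ where the limiting modulus is positive. Here one must prevent the $\varphi_n$-shortest representatives of $\gamma_j$ from collapsing their length in the limit, equivalently prevent $\varphi_n$-mass from escaping into the punctures; this is exactly the content that Lemma \ref{compact} (norm-preserving compactness of $JS_0(\Gamma)$) supplies, and after normalizing $\varphi_n/\|\varphi_n\|$ one can invoke it to keep the minimizing $\varphi_n$-geodesics in a fixed compact core and pass to the $\varphi$-limit. Two further, more routine points needing care are the continuity of the modulus under kernel convergence and the verification that the non-overlapping and type-$\gamma_j$ properties of the $R_{jn}$ survive in the kernel limit. (Note finally that at $\varphi=0$ the moduli $M_{jn}$ need not tend to $0$, so the modulus statement is to be read for $\varphi\neq0$.)
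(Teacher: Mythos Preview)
The paper does not prove this lemma; it cites Strebel's Corollary~21.2, and the only part it addresses explicitly is precisely the gap you isolate --- the convergence $a_{jn}\to a_j$ on indices with $M_j=0$ --- in Remark~\ref{converges_problem}. There the fix is not compactness but Strebel's Theorem~24.7: under locally uniform convergence $\varphi_n\to\varphi$, the height function $\inf_{\gamma'\sim\gamma}\int_{\gamma'}\bigl|\im\{\varphi_n(z)^{1/2}dz\}\bigr|$ is continuous, and for Jenkins--Strebel differentials this infimum equals $a_j$ (the infimum is realized along closed vertical trajectories, whether or not $R_j$ is degenerate). That gives $a_{jn}\to a_j$ for \emph{all} $j$ at once and makes the squeeze, the kernel limits, and the separate $\varphi=0$ case unnecessary.

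Your length--area squeeze is a legitimate alternative route for the non-degenerate indices and for identifying $\varphi$ as Jenkins--Strebel, but your proposed patch for the degenerate indices does not work as written. Lemma~\ref{compact} is only sequential compactness of $JS_0(\Gamma)$ in the locally uniform topology; it says nothing about $\varphi_n$-geodesic representatives of $\gamma_j$ staying in a fixed compact core. Preventing mass (and hence short curves) from escaping to the punctures is exactly the analytic content behind Theorem~24.7, and you would have to reprove that estimate rather than invoke compactness. A second, smaller issue: modulus is only semicontinuous under Carath\'eodory kernel convergence, so the asserted equality $\Mod(R_j^\infty)=\mu_j$ is not automatic; once you use the height--continuity argument this detour through kernel limits can be dropped entirely.
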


\begin{proof}[Proof of Theorem \ref{F_is_continuous}]
Let $\{(A_{1n},\ldots ,A_{kn})\}_{n\in \mathbf N}$ be a sequence in $\mathbf R_{\geq 0}^k$ and suppose it converges to $(A_1,\ldots ,A_k)\in \mathbf R_{\geq 0}^k$ as $n\rightarrow \infty $.
We set $F(A_{1n},\ldots ,A_{kn})=\varphi _n$ and $F(A_1,\ldots ,A_k)=\varphi $.

We first consider as $(A_1,\ldots ,A_k)\not =\mathbf{o}$, hence the sequence $\{(A_{1n},\ldots ,A_{kn})\}_{n\in \mathbf N}$ can be non-zero for any $n$, and then $\varphi _n,\varphi \not =0$ by Remark \ref{0}.
Since $\varphi _n/\|\varphi _n\|$ is in $JS_0(\Gamma )$ and by Lemma \ref{compact}, it converges locally uniformly to some $\tilde \varphi \in JS_0(\Gamma )$ after passing to a subsequence if necessary.
Let $M_{jn}$ be the modulus of the characteristic ring domain of type $\gamma _j$ of $\varphi _n$, and set $a_{jn}=\inf _{\gamma '\sim \gamma _j}\int _{\gamma '}|\varphi _n|^{\frac{1}{2}}$.
Similarly, set $\tilde M_j,\tilde a_j$ and $M_j,a_j$ by $\tilde \varphi $ and $\varphi $, respectively.
Therefore, since $\varphi _n/\|\varphi _n\|$ converges locally uniformly to $\tilde \varphi $, we have
\begin{align*}
M_{jn}\rightarrow \tilde M_j, \frac{a_{jn}}{\|\varphi _n\|^{\frac{1}{2}}}\rightarrow \tilde a_j
\end{align*}
by Lemma \ref{converges}.
Let $\lambda =\sum _{j=1}^kA_j^2\tilde M_j$ then the limits
\begin{align*}
&\|\varphi _n\|=\sum _{j=1}^ka_{jn}^2M_{jn}=\sum _{j=1}^kA_{jn}^2M_{jn}\rightarrow \sum _{j=1}^kA_j^2\tilde M_j=\lambda ,\\
&a_{jn}=\|\varphi _n\|^{\frac{1}{2}}\cdot \frac{a_{jn}}{\|\varphi _n\|^{\frac{1}{2}}}\rightarrow \lambda ^{\frac{1}{2}}\tilde a_j
\end{align*}
hold.
We now use Theorem \ref{Strebel_ext_prop} and $A_{jn}\leq a_{jn}, A_j\leq a_j$ then
\begin{align*}
\|\varphi \|&=\sum _{j=1}^ka_j^2M_j\geq \sum _{j=1}^ka_j^2\tilde M_j\geq \sum _{j=1}^kA_j^2\tilde M_j=\lambda =\lambda \|\tilde \varphi \|\\
&=\lambda \sum _{j=1}^k\tilde a_j^2\tilde M_j\geq \lambda \sum _{j=1}^k\tilde a_j^2M_j=\lim _{n\rightarrow \infty }\sum _{j=1}^ka_{jn}^2M_j\geq \lim _{n\rightarrow \infty }\sum _{j=1}^kA_{jn}^2M_j\\
&=\sum _{j=1}^kA_j^2M_j=\sum _{j=1}^ka_j^2M_j=\|\varphi \|.
\end{align*}
Therefore the equation $\sum _{j=1}^ka_j^2M_j=\sum _{j=1}^ka_j^2\tilde M_j$ holds, so each characteristic ring domains of $\varphi $ and $\tilde \varphi $ coincide by Theorem \ref{Strebel_ext_prop}.
This implies that $\varphi $ and $\tilde \varphi $ coincide up to a scalar multiple.
On the other hand, the equation $\|\varphi \|=\lambda \|\tilde \varphi \|$ also holds, then we have $\varphi =\lambda \tilde \varphi $.
Finally, $\varphi _n=\|\varphi _n\|\cdot (\varphi _n/\|\varphi _n\|)$ converges locally uniformly to $\lambda \tilde \varphi =\varphi $.
This convergence does not depend on any subsequence of $\varphi _n/\|\varphi _n\|$.

The remaining case is if $(A_1,\ldots ,A_k)=\mathbf o$, that is $\varphi =0$.
Suppose that $\varphi _n$ does not converge locally uniformly to $0$.
There are a compact set $K$ on $\Sigma $, a local coordinate $z$ on $K$, and $\varepsilon >0$ such that for any $N\in \mathbf N$, there exists $n(N)>N$ then $\sup_{z\in K}|\varphi _{n(N)}(z)|\geq \varepsilon $.
Therefore $\varphi _{n(N)}\not =0$ for any $N\in \mathbf N$.
By Lemma \ref{compact}, $\varphi _{n(N)}/\|\varphi _{n(N)}\|$ converges locally uniformly to $\tilde \varphi \in JS_0(\Gamma )$ as $N\rightarrow \infty $ after passing to a subsequence if necessary.
We set suitable quantities $M_{jn(N)},a_{jn(N)}$ by $\varphi _{n(N)}$.
Since $M_{jn(N)}$ converges to some value by Lemma \ref{converges}, so that $\|\varphi _{n(N)}\|=\sum _{j=1}^ka_{jn(N)}^2M_{jn(N)}=\sum _{j=1}^kA_{jn(N)}^2M_{jn(N)}\rightarrow 0$.
Then the differential $\varphi _{n(N)}=\|\varphi _{n(N)}\|\cdot (\varphi _{n(N)}/\|\varphi _{n(N)}\|)$ converges locally uniformly to $0\cdot \tilde \varphi =0$.
This means that $\lim _{N\rightarrow \infty }\sup _{z\in K}|\varphi _{n(N)}(z)|=0$ but we already have that $\sup_{z\in K}|\varphi _{n(N)}(z)|\geq \varepsilon $, so this is a contradiction.
\end{proof}

\begin{rem}\label{converges_problem}
For \cite[Corollary 21.2]{Strebel84}, in fact, there is no direct description for that if $\varphi _n\in JS(\Gamma )$ converges locally uniformly to $\varphi \in JS(\Gamma )$, then $a_{jn}$ converges to $a_j$ when the $j$-th characteristic ring domain $R_j$ of $\varphi $ is degenerated.
However, this little bit problem is solved by using \cite[Theorem 24.7]{Strebel84}.
The theorem says that if a sequence of holomorphic quadratic differentials $\{\varphi _n\}_{n\in \mathbf N}$ on $\Sigma $ converges locally uniformly to $\varphi $, $\inf _{\gamma '\sim \gamma }\int _{\gamma '}|\im \{\varphi _n(z)^\frac{1}{2}dz\}|$ tends to $\inf _{\gamma '\sim \gamma }\int _{\gamma '}|\im \{\varphi (z)^\frac{1}{2}dz\}|$ for any closed curve $\gamma $ on $\Sigma $.

We go back to our situation.
For the quantity $a_j$, the equation
\begin{align*}
a_j=\inf _{\gamma '\sim \gamma _j}\int _{\gamma '}|\varphi (z)^{\frac{1}{2}}dz|=\inf _{\gamma '\sim \gamma _j}\int _{\gamma '}|\im \{\varphi (z)^{\frac{1}{2}}dz\}|
\end{align*}
holds.
Because $a_j$ is equal to either the length of a closed vertical trajectory of $\varphi $ (if $R_j$ is non-degenerated) or the length of the union of some vertical trajectories contained in $CT(\varphi )$ such that the union forms a closed curve homotopic to $\gamma _j$ (if $R_j$ is degenerated) with respect to the metric $|\varphi |^{\frac{1}{2}}$.
The readers confirm this fact in \cite{FatLauPoe79}.
This is of course true for any $a_{jn}$.
Therefore, by the theorem,
\begin{align*}
a_{jn}=\inf _{\gamma '\sim \gamma _j}\int _{\gamma '}|\im \{\varphi _n(z)^{\frac{1}{2}}dz\}|\rightarrow \inf _{\gamma '\sim \gamma _j}\int _{\gamma '}|\im \{\varphi (z)^{\frac{1}{2}}dz\}|=a_j
\end{align*}
even if the $j$-th characteristic ring domain of $\varphi $ is degenerated.

So our Lemma \ref{converges} contains the above result.
\end{rem}

We now consider the behavior of $F$ when only one entry of $(A_1,\ldots ,A_k)\in \mathbf R_{\geq 0}^k$ varies and the others are invariant.
Without loss of generality, we can choice the $k$-th entry $A_k$ as the variable, so the following two lemmas hold.

\begin{lemma}\label{lower}
Let $\varphi =F(A_1,\ldots ,A_{k-1},A_k)$ and suppose that the $k$-th characteristic ring domain of $\varphi $ is degenerated.
Then for any $0\leq A'_k\leq A_k$, the $k$-th characteristic ring domain of $F(A_1,\ldots ,A_{k-1},A'_k)$ is also degenerated.
Moreover, any $F(A_1,\ldots ,A_{k-1},A'_k)$ is equal to $\varphi $.
\end{lemma}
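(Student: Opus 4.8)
The plan is to show that lowering the last circumference cannot disturb an already-degenerate maximizer, by exploiting the monotonicity of the extremal sum in the coordinate $A_k$ and then appealing to the uniqueness in Theorem \ref{Strebel_ext_prob}. Write $\{R_j\}_{j=1}^k$ for the characteristic ring domains of $\varphi=F(A_1,\ldots,A_{k-1},A_k)$ and $M_j$ for their moduli, so that $M_k=0$ by hypothesis. By the definition of $F$ the family $\{R_j\}$ maximizes $\sum_{j=1}^{k-1}A_j^2\tilde M_j+A_k^2\tilde M_k$ among all non-overlapping families $\{\tilde R_j\}$ of type $\Gamma$ with moduli $\tilde M_j$, and the maximal value equals $\|\varphi\|=\sum_{j=1}^{k-1}A_j^2M_j$ since $M_k=0$.

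The heart of the argument is to check that this same family $\{R_j\}$ still maximizes the perturbed sum $\sum_{j=1}^{k-1}A_j^2\tilde M_j+(A'_k)^2\tilde M_k$ for every $0\le A'_k\le A_k$. For an arbitrary competing family $\{\tilde R_j\}$ of type $\Gamma$ I would use
\[
\sum_{j=1}^{k-1}A_j^2\tilde M_j+(A'_k)^2\tilde M_k\;\le\;\sum_{j=1}^{k-1}A_j^2\tilde M_j+A_k^2\tilde M_k\;\le\;\sum_{j=1}^{k-1}A_j^2M_j,
\]
where the first inequality follows from $(A'_k)^2\le A_k^2$ together with $\tilde M_k\ge 0$, and the second is the maximality of $\{R_j\}$ for the original sum. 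Because $M_k=0$, the right-hand side is precisely the value of the perturbed sum evaluated at $\{R_j\}$ itself, so $\{R_j\}$ maximizes the perturbed sum as well. As an incidental consistency check, the circumference conditions attached to the data $(A_1,\ldots,A_{k-1},A'_k)$ are met by $\varphi$: for $j<k$ one has $A'_j=A_j=a_j$ when $M_j>0$ and $A'_j=A_j\le a_j$ when $M_j=0$, while for $j=k$ one has $A'_k\le A_k\le a_k$.

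It remains to pass from this maximizing property back to the differential. Since $\varphi$ is a Jenkins-Strebel differential of type $\Gamma$ whose characteristic ring domains maximize the perturbed sum, the uniqueness clause of Theorem \ref{Strebel_ext_prob} identifies it with the extremal differential for the data $(A_1,\ldots,A_{k-1},A'_k)$; that is, $\varphi=F(A_1,\ldots,A_{k-1},A'_k)$. In particular the $k$-th characteristic ring domain of $F(A_1,\ldots,A_{k-1},A'_k)$ is $R_k$, which is degenerate, so both assertions follow at once. The inequality chain is routine; the two places demanding care are the endpoint $A'_k=0$, where $F$ is defined by deleting $\gamma_k$ and one must invoke the convention that a differential of type $\Gamma\setminus\{\gamma_k\}$ is also of type $\Gamma$ (the perturbed sum then no longer involving $\tilde M_k$, so that the reduced and full problems share a maximizer), and the closing step, whose legitimacy—reading equality of differentials off equality of maximizing families—rests precisely on the uniqueness furnished by Theorem \ref{Strebel_ext_prob}.
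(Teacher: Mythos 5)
Your proof is correct, but it takes a genuinely different route from the paper's. The paper introduces $\varphi'=F(A_1,\ldots ,A_{k-1},A'_k)$ at the outset and plays $\varphi $ and $\varphi'$ against each other through a closed chain of inequalities built on Theorem \ref{Strebel_ext_prop} (the extremal property), applied once with $\varphi'$ extremal and the ring domains of $\varphi $ as competitors and once the other way around; since the chain starts and ends at $\|\varphi'\|$, every inequality is an equality, the equality case of Theorem \ref{Strebel_ext_prop} forces the two families of characteristic ring domains to coincide, hence $\varphi'=c\varphi $, and $\|\varphi'\|=\|\varphi \|$ kills the scalar. You never invoke Theorem \ref{Strebel_ext_prop} at all: you observe that shrinking the coefficient of $\tilde M_k$ can only lower every competitor's score while leaving the score of $\{R_j\}$ unchanged (because $M_k=0$), so $\{R_j\}$ already solves the perturbed extremal problem, and you close by the uniqueness clause of Theorem \ref{Strebel_ext_prob}. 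Your route is shorter and isolates the real mechanism, namely monotonicity of the objective in the $k$-th coefficient at a maximizer with $M_k=0$; its cost is that everything hinges on reading the uniqueness clause correctly, i.e.\ that $F(A_1,\ldots ,A_{k-1},A'_k)$ is characterized among Jenkins-Strebel differentials of type $\Gamma $ by the \emph{conjunction} of the maximizing property and the circumference normalization ($A_j=a_j$ when the modulus is positive, $A_j\leq a_j$ when it vanishes). For this reason the step you label an ``incidental consistency check'' is in fact indispensable: without the normalization, every positive multiple $c\varphi $ has the same maximizing ring domains, and uniqueness could not single out one differential. Since you do carry out that check, and you treat the endpoint $A'_k=0$ via the paper's convention of omitting curves with zero entry, your argument is complete; the paper's longer chain buys independence from this parsing issue by re-deriving the needed uniqueness in situ from the extremal property, with the norm equality $\|\varphi'\|=\|\varphi \|$ obtained as a byproduct rather than presupposed.
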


\begin{proof}
Let $\varphi '=F(A_1,\ldots ,A_{k-1},A'_k)$ and $M_j,a_j$ be the corresponding quantities by $\varphi $, also $M'_j,a'_j$ the ones by $\varphi '$.
We notice that $A_j\leq a_j,a'_j$ for any $j=1,\ldots ,k-1$, $A_k\leq a_k$, $A'_k\leq a'_k$, and $M_k=0$.
Then by Theorem \ref{Strebel_ext_prop}, we have
\begin{align*}
\|\varphi '\|&=\sum _{j=1}^k{a'_j}^2M'_j\geq \sum _{j=1}^k{a'_j}^2M_j\geq \sum _{j=1}^{k-1}A_j^2M_j\\
&=\sum _{j=1}^{k-1}A_j^2M_j+A_k^2M_k=\|\varphi \|=\sum _{j=1}^ka_j^2M_j\geq \sum _{j=1}^ka_j^2M'_j\\
&\geq \sum _{j=1}^{k-1}A_j^2M'_j+A_k^2M'_k\geq \sum _{j=1}^{k-1}A_j^2M'_j+{A'_k}^2M'_k=\|\varphi '\|.
\end{align*}
Therefore, the above inequalities are all equalities, then $\varphi '$ and $\varphi $ coincide up to a scalar multiple, but $\|\varphi '\|=\|\varphi \|$ makes the fact that $\varphi '=\varphi $.
\end{proof}

\begin{lemma}\label{upper}
Let $\varphi =F(A_1,\ldots ,A_{k-1},A_k)$ and suppose that the $k$-th characteristic ring domain of $\varphi $ is non-degenerated.
Then for any $A'_k\geq A_k$, the $k$-th characteristic ring domain of $F(A_1,\ldots ,A_{k-1},A'_k)$ is also non-degenerated.
\end{lemma}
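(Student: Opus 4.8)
The plan is to observe that Lemma~\ref{upper} is exactly the contrapositive of the already-proved Lemma~\ref{lower}, so that no new extremal-length estimate is needed. Write $\varphi = F(A_1,\ldots,A_{k-1},A_k)$ with non-degenerated $k$-th characteristic ring domain, i.e. $M_k>0$. I would argue by contradiction: suppose there were some $A'_k \geq A_k$ for which $\varphi' = F(A_1,\ldots,A_{k-1},A'_k)$ has a \emph{degenerated} $k$-th ring domain.

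Then I would apply Lemma~\ref{lower} to $\varphi'$, taking $\varphi'$ in the role of ``$\varphi$'' there and $A'_k$ in the role of ``$A_k$''. Since the $k$-th ring domain of $\varphi'$ is degenerated and $0 \leq A_k \leq A'_k$, Lemma~\ref{lower} forces the $k$-th ring domain of $F(A_1,\ldots,A_{k-1},A_k)=\varphi$ to be degenerated as well. This contradicts the hypothesis $M_k>0$, and the contradiction establishes that the $k$-th ring domain of $F(A_1,\ldots,A_{k-1},A'_k)$ is non-degenerated for every $A'_k\geq A_k$.

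Conceptually, the two lemmas together say that along the one-parameter family $t\mapsto F(A_1,\ldots,A_{k-1},t)$, the set of parameters $t$ at which the $k$-th ring domain degenerates is downward closed (an interval of the form $[0,t_0]$), equivalently the set where it is non-degenerated is upward closed; Lemma~\ref{lower} provides the former and Lemma~\ref{upper} the latter. I expect essentially no obstacle in this route beyond bookkeeping: one only has to read the quantifier in Lemma~\ref{lower} correctly (it fixes the larger endpoint and concludes degeneracy for all smaller values), which is precisely what lets the inequality $A_k \leq A'_k$ feed into it.

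If instead a self-contained argument were wanted, I would mimic the inequality chain in the proof of Lemma~\ref{lower}: assume $M'_k=0$, use $A_k=a_k$ (from Theorem~\ref{Strebel_ext_prob}, valid because $M_k>0$) together with $A'_k\leq a'_k$ and $A_j\leq a_j,a'_j$, and run the extremal property (Theorem~\ref{Strebel_ext_prop}) in both directions to collapse everything to a string of equalities forcing the characteristic ring domains of $\varphi$ and $\varphi'$ to coincide, hence $M'_k=M_k>0$, a contradiction. Since this merely reconstructs the content of Lemma~\ref{lower}, the contrapositive route above is the cleaner one.
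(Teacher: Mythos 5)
Your proof is correct, and it takes a genuinely different route from the paper's. The paper proves Lemma~\ref{upper} directly: it runs the extremal property (Theorem~\ref{Strebel_ext_prop}) in both directions, exactly as in the proof of Lemma~\ref{lower}, to arrive at the inequality $({A'_k}^2-A_k^2)(M_k-M'_k)\leq 0$, and then concludes $M'_k\geq M_k>0$ when $A'_k>A_k$ (and $\varphi '=\varphi $, hence $M'_k=M_k>0$, when $A'_k=A_k$). You instead observe that, read as a statement quantified over pairs of parameter values, Lemma~\ref{lower} says the set of values of the $k$-th entry at which the $k$-th ring domain degenerates is downward closed, and Lemma~\ref{upper} is precisely the contrapositive of that statement; your instantiation (taking $\varphi '$ in the role of the degenerate differential and $A_k\in [0,A'_k]$ as the smaller parameter) is a legitimate application, and there is no circularity since the paper's proof of Lemma~\ref{lower} nowhere uses Lemma~\ref{upper}. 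What your route buys is economy: no new analytic argument is needed at all, and the monotone structure of the family $t\mapsto F(A_1,\ldots ,A_{k-1},t)$ is made explicit. What the paper's direct computation buys is a little extra information not stated in the lemma: the monotonicity $M'_k\geq M_k$ of the $k$-th modulus for $A'_k>A_k$. That extra fact is not used later (Proposition~\ref{characterize} only needs non-degeneracy, which your argument supplies), so your shorter proof suffices for everything downstream. Your fallback self-contained sketch is also sound; it essentially reproduces the paper's inequality chain under the additional assumption $M'_k=0$ and invokes the equality case of Theorem~\ref{Strebel_ext_prop} to force the ring domains of $\varphi $ and $\varphi '$ to coincide, yielding the same contradiction.
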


\begin{proof}
The same notations for the previous lemma give $\varphi '=F(A_1,\ldots ,A_{k-1},A'_k)$, $A_j\leq a_j,a'_j$ for any $j=1,\ldots ,k-1$, $A_k\leq a_k$, $A'_k\leq a'_k$, and $M_k>0$.
By Theorem \ref{Strebel_ext_prop},
\begin{align*}
\|\varphi '\|&=\sum _{j=1}^k{a'_j}^2M'_j\geq \sum _{j=1}^k{a'_j}^2M_j\geq \sum _{j=1}^{k-1}A_j^2M_j+{A'_k}^2M_k\\
&=\sum _{j=1}^{k-1}A_j^2M_j+A_k^2M_k+({A'_k}^2-A_k^2)M_k=\|\varphi \|+({A'_k}^2-A_k^2)M_k\\
&=\sum _{j=1}^ka_j^2M_j+({A'_k}^2-A_k^2)M_k\geq \sum _{j=1}^ka_j^2M'_j+({A'_k}^2-A_k^2)M_k\\
&\geq \sum _{j=1}^kA_j^2M'_j+({A'_k}^2-A_k^2)M_k=\sum _{j=1}^{k-1}A_j^2M'_j+{A'_k}^2M'_k+({A'_k}^2-A_k^2)(M_k-M'_k)\\
&=\|\varphi '\|+({A'_k}^2-A_k^2)(M_k-M'_k).
\end{align*}
This implies that $({A'_k}^2-A_k^2)(M_k-M'_k)\leq 0$.
By the assumption $A'_k\geq A_k$, if $A'_k>A_k$ then $0<M_k\leq M'_k$, and if $A'_k=A_k$ then $\varphi '=\varphi $, so that $0<M_k=M'_k$.
Therefore we have $M'_k>0$ in any case.
\end{proof}

By combining the above two lemmas, we have the following proposition.

\begin{prop}
For any fixed $(A_1,\ldots ,A_{k-1})\in \mathbf R_{\geq 0}^{k-1}$, we have
\begin{align*}
&\sup\{A_k\mid \text{the $k$-th ring domain of $F(A_1,\ldots ,A_{k-1},A_k)$ is degenerated}\}\\
=&\inf\{A_k\mid \text{the $k$-th ring domain of $F(A_1,\ldots ,A_{k-1},A_k)$ is non-degenerated}\}.
\end{align*}
\end{prop}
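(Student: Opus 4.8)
The plan is to recognize that the two sets in the statement form a partition of the half-line $\mathbf{R}_{\geq 0}$ into an order-interval of ``degenerate'' parameters and an order-interval of ``non-degenerate'' ones, so that the asserted equality becomes a short order-theoretic fact once the two preceding lemmas are invoked. Set
\begin{align*}
D &= \{A_k\geq 0\mid \text{the $k$-th ring domain of $F(A_1,\ldots ,A_{k-1},A_k)$ is degenerated}\},\\
N &= \{A_k\geq 0\mid \text{the $k$-th ring domain of $F(A_1,\ldots ,A_{k-1},A_k)$ is non-degenerated}\}.
\end{align*}
For every $A_k\geq 0$ the differential $F(A_1,\ldots ,A_{k-1},A_k)$ is well defined by Theorem \ref{Strebel_ext_prob}, and its $k$-th characteristic ring domain either is or is not degenerated; hence $D\cap N=\varnothing$ and $D\cup N=\mathbf{R}_{\geq 0}$. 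I would also note at this stage that $0\in D$, since the definition of $F$ omits $\gamma _k$ when $A_k=0$ and so yields a degenerated $k$-th ring domain; thus $D\neq\varnothing$ and $\sup D\geq 0$.

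Next I would read off the monotonicity of the two sets directly from the lemmas just proved. Lemma \ref{lower} is precisely the statement that $D$ is downward closed, $[0,A_k]\subseteq D$ whenever $A_k\in D$, and Lemma \ref{upper} is precisely the statement that $N$ is upward closed, $[A_k,\infty)\subseteq N$ whenever $A_k\in N$. Combining either one with disjointness gives the separation property: if $d\in D$ and $n\in N$ satisfied $d\geq n$, then $d\in N$ by Lemma \ref{upper}, contradicting $d\in D$. Hence every element of $D$ is strictly smaller than every element of $N$, and in particular $\sup D\leq \inf N$.

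It then remains to rule out a gap between the two extrema. If one had $\sup D<\inf N$, then any $c$ with $\sup D<c<\inf N$ would satisfy $c>0$, so $c\in\mathbf{R}_{\geq 0}=D\cup N$, while at the same time $c\notin D$ (because $c>\sup D$) and $c\notin N$ (because $c<\inf N$), a contradiction. Therefore $\sup D=\inf N$, which is the claim. The only degenerate situation is $N=\varnothing$, in which the $k$-th ring domain degenerates for every $A_k$; this is harmless under the conventions $\inf\varnothing=+\infty$ and $\sup\mathbf{R}_{\geq 0}=+\infty$.

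Essentially all of the analytic content has already been spent in Lemmas \ref{lower} and \ref{upper}, so what is left is purely order-theoretic. The step I would watch most carefully is the claim that $D$ and $N$ genuinely exhaust $\mathbf{R}_{\geq 0}$ with no overlap, that is, that ``degenerated'' and ``non-degenerated'' are complementary, well-defined properties of the \emph{unique} differential $F(A_1,\ldots ,A_{k-1},A_k)$ attached to each parameter value. Once this dichotomy is secured and the monotonicity from the lemmas is in hand, the separation-plus-no-gap argument leaves no room for a discrepancy between $\sup D$ and $\inf N$.
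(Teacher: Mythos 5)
Your proof is correct and is essentially the argument the paper intends: the paper states the proposition with only the remark ``by combining the above two lemmas,'' and your write-up is exactly that combination made explicit --- Lemma \ref{lower} gives downward closure of the degenerate set, Lemma \ref{upper} gives upward closure of the non-degenerate set, and the partition of $\mathbf{R}_{\geq 0}$ forces $\sup D=\inf N$. Your additional care about $0\in D$, the well-definedness of the dichotomy via uniqueness in Theorem \ref{Strebel_ext_prob}, and the convention handling the (a priori possible) case $N=\varnothing$ are welcome refinements, not deviations.
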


Then we denote the value by $A^*_k=A^*_k(A_1,\ldots ,A_{k-1})$.
We investigate $A^*_k$ in the remaining section.

\begin{lemma}\label{lower+}
For any $0\leq A_k\leq A^*_k$, the differential $F(A_1,\ldots ,A_{k-1},A_k)$ does not depend on $A_k$.
\end{lemma}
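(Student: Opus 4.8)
The plan is to combine Lemma \ref{lower}, which already yields the invariance of $F$ whenever the top value has a degenerated $k$-th ring domain, with the continuity of $F$ (Theorem \ref{F_is_continuous}) in order to reach the threshold value $A^*_k$ itself. First I would record the immediate consequence of the definition of $A^*_k$: since $A^*_k$ is the infimum of those $A_k$ for which the $k$-th characteristic ring domain of $F(A_1,\ldots,A_{k-1},A_k)$ is non-degenerated, every $A_k$ with $0\le A_k<A^*_k$ lies strictly below this infimum and hence cannot give a non-degenerated $k$-th ring domain; that is, the $k$-th ring domain is degenerated for all such $A_k$.

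Next I would prove that $F$ is constant on the half-open interval $[0,A^*_k)$. Fix any two values $0\le A_k^{(1)}<A_k^{(2)}<A^*_k$. By the previous step both produce a degenerated $k$-th ring domain, so, setting $\varphi =F(A_1,\ldots,A_{k-1},A_k^{(2)})$ and applying Lemma \ref{lower} with $A'_k=A_k^{(1)}\le A_k^{(2)}$, I obtain $F(A_1,\ldots,A_{k-1},A_k^{(1)})=\varphi =F(A_1,\ldots,A_{k-1},A_k^{(2)})$. Since $A_k^{(1)},A_k^{(2)}$ were arbitrary, all these differentials agree; I denote the common value by $\varphi _0$, so that $F(A_1,\ldots,A_{k-1},A_k)=\varphi _0$ for every $A_k\in[0,A^*_k)$.

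It then remains only to identify $F(A_1,\ldots,A_{k-1},A^*_k)$ with $\varphi _0$, and this is the point I expect to be the main obstacle: at the threshold the $k$-th ring domain may well be non-degenerated, so Lemma \ref{lower} need not apply there and the argument above cannot simply be pushed to the endpoint. Here I would invoke continuity. Choosing a sequence $A_k^{(n)}\nearrow A^*_k$ with $A_k^{(n)}<A^*_k$, the points $(A_1,\ldots,A_{k-1},A_k^{(n)})$ converge to $(A_1,\ldots,A_{k-1},A^*_k)$ in $\mathbf R_{\geq 0}^k$, so by Theorem \ref{F_is_continuous} the differentials $F(A_1,\ldots,A_{k-1},A_k^{(n)})$ converge locally uniformly to $F(A_1,\ldots,A_{k-1},A^*_k)$. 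Because $F(A_1,\ldots,A_{k-1},A_k^{(n)})=\varphi _0$ for every $n$, the limit is again $\varphi _0$, which gives $F(A_1,\ldots,A_{k-1},A^*_k)=\varphi _0$ and completes the argument.

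Finally I would dispose of the two degenerate ranges of $A^*_k$ in a sentence: when $A^*_k=0$ the interval $[0,A^*_k]$ reduces to the single point $\mathbf o$ and there is nothing to prove, while when $A^*_k=+\infty$ the first two steps already furnish constancy on all of $[0,\infty)$ and no endpoint needs to be treated.
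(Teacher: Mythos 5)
Your proof is correct and takes essentially the same route as the paper: Lemma \ref{lower} gives constancy of $F$ on $[0,A^*_k)$ (since every value below the threshold yields a degenerated $k$-th ring domain), and the continuity of $F$ (Theorem \ref{F_is_continuous}) transfers this to the endpoint $A^*_k$. Your explicit handling of the edge cases $A^*_k=0$ and $A^*_k=\infty$ is slightly more careful than the paper's one-line dismissal, but the substance is identical.
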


\begin{proof}
If $A^*_k>0$, by Lemma \ref{lower}, every $F(A_1,\ldots ,A_{k-1},A_k)$ is same for any $0\leq A_k<A^*_k$.
By the continuity of $F$, $F(A_1,\ldots ,A_{k-1},A^*_k)$ is also same.
On the other hand, there is no consideration for the argument if $A^*_k=0$.
\end{proof}

\begin{prop}\label{characterize}
The value $A^*_k$ is characterized as
\begin{align*}
A^*_k=\inf _{\gamma '\sim \gamma _k}\int _{\gamma '}|F(A_1,\ldots ,A_{k-1},0)|^{\frac{1}{2}}.
\end{align*}
\end{prop}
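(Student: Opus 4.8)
The plan is to establish the two inequalities $A^*_k\le a_k$ and $A^*_k\ge a_k$, where throughout I abbreviate $\varphi:=F(A_1,\ldots,A_{k-1},0)$ and observe that the right-hand side of the asserted formula is exactly the quantity $a_k=\inf_{\gamma'\sim\gamma_k}\int_{\gamma'}|\varphi|^{\frac{1}{2}}$ attached to this one fixed differential $\varphi$. As a preliminary step I would record, using Lemma \ref{lower+} together with the continuity of $F$, that $F(A_1,\ldots,A_{k-1},A_k)=\varphi$ for every $0\le A_k\le A^*_k$, and that the $k$-th characteristic ring domain of $\varphi$ is degenerate, so that $M_k=0$ for $\varphi$; the latter holds because $\varphi$ is the differential obtained by omitting $\gamma_k$, and is also forced by approaching $A^*_k$ from below.

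For the upper bound I would apply Theorem \ref{Strebel_ext_prob} to $\varphi=F(A_1,\ldots,A_{k-1},A_k)$ for each $A_k$ in the degenerate range $[0,A^*_k]$. Since the $k$-th modulus vanishes, the theorem gives $A_k\le a_k$ for every such $A_k$, and taking the supremum over $A_k\in[0,A^*_k]$ yields $A^*_k\le a_k$. In particular $A^*_k<\infty$, so the set of $A_k$ for which the $k$-th ring domain is non-degenerate is non-empty and, by Lemmas \ref{lower} and \ref{upper}, coincides with the interval $(A^*_k,\infty)$.

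For the lower bound I would argue by passing to a limit. For any $A_k>A^*_k$ the $k$-th ring domain of $\psi_{A_k}:=F(A_1,\ldots,A_{k-1},A_k)$ is non-degenerate, so $M_k>0$ and Theorem \ref{Strebel_ext_prob} gives the exact identity $A_k=a_k(\psi_{A_k})$, the circumference of that ring domain. Letting $A_k\searrow A^*_k$, the continuity of $F$ (Theorem \ref{F_is_continuous}) shows that $\psi_{A_k}$ converges locally uniformly to $F(A_1,\ldots,A_{k-1},A^*_k)=\varphi$, and Lemma \ref{converges} shows that $a_k(\psi_{A_k})$ converges to $a_k$. Taking the limit in $A_k=a_k(\psi_{A_k})$ then produces $A^*_k=a_k$, which together with the upper bound finishes the proof.

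The step I expect to be the main obstacle is the convergence $a_k(\psi_{A_k})\to a_k$ across the degeneration: the limit $\varphi$ has a degenerate $k$-th ring domain, so $a_k$ is no longer the length of a closed vertical trajectory but the length of a chain of critical trajectories, and the naive reading of Lemma \ref{converges} does not visibly cover this situation. This is precisely the point flagged in Remark \ref{converges_problem} and resolved there through \cite[Theorem 24.7]{Strebel84}, so I would invoke that extension to secure the convergence. (The degenerate case $(A_1,\ldots,A_{k-1})=\mathbf o$, in which $\varphi=0$ and both sides vanish by Remark \ref{0}, is handled by the same limiting argument.)
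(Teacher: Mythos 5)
Your proposal is correct and is essentially the paper's own argument: the heart of both is that for $A_k>A^*_k$ Theorem \ref{Strebel_ext_prob} (with Lemma \ref{upper}) forces $A_k=a_k\bigl(F(A_1,\ldots ,A_{k-1},A_k)\bigr)$, and then letting $A_k\searrow A^*_k$, using the continuity of $F$, Lemma \ref{converges} (justified across the degeneration exactly as in Remark \ref{converges_problem}), and the identification $F(A_1,\ldots ,A_{k-1},A^*_k)=F(A_1,\ldots ,A_{k-1},0)$ from Lemma \ref{lower+}, yields the stated formula. Your separate upper-bound step $A^*_k\leq a_k$ is logically redundant once the limiting argument gives equality, though it has the minor virtue of making explicit the finiteness of $A^*_k$ that the limit $A_k\searrow A^*_k$ implicitly requires (the paper only records this later, in Proposition \ref{star_range}).
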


\begin{proof}
We set
\begin{align*}
a_k(A_k)=\inf _{\gamma '\sim \gamma _k}\int _{\gamma '}|F(A_1,\ldots ,A_{k-1},A_k)|^{\frac{1}{2}}
\end{align*}
for all $A_k\geq 0$, then $A_k=a_k(A_k)$ if $A_k>A^*_k$ by Theorem \ref{Strebel_ext_prob} and Lemma \ref{upper}.
Let us vary $A_k$ continuously, then $F(A_1,\ldots ,A_{k-1},A_k)$ also varies continuously by the continuity of $F$, therefore $a_k(A_k)$ is continuous by Lemma \ref{converges}.
As $A_k\searrow A^*_k$, we have $A^*_k=a_k(A^*_k)$.

By Lemma \ref{lower+}, $F(A_1,\ldots ,A_{k-1},A^*_k)=F(A_1,\ldots ,A_{k-1},0)$.
Therefore, we have $A^*_k=a_k(A^*_k)=a_k(0)$.
\end{proof}

\begin{prop}\label{star_range}
For $A^*_k$, it holds $A^*_k\lneq \infty $ in any case, and holds $0\lneq A^*_k$ if it is neither $\# \Gamma =1$ nor $(A_1,\ldots ,A_{k-1})=\mathbf o$.
Therefore the mapping $F$ is not injective whenever $\# \Gamma \geq 2$.
\end{prop}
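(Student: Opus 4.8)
The plan is to read all three assertions off the characterization of Proposition \ref{characterize}, namely $A^*_k = \inf_{\gamma' \sim \gamma_k} \int_{\gamma'} |\varphi_0|^{\frac{1}{2}}$ with $\varphi_0 := F(A_1,\ldots,A_{k-1},0)$; thus $A^*_k$ is precisely the quantity $a_k$ of the differential $\varphi_0$, whose $k$-th characteristic ring domain is degenerate. By Remark \ref{0}, $\varphi_0 = 0$ exactly when $(A_1,\ldots,A_{k-1}) = \mathbf o$, and this is also forced when $\#\Gamma = 1$ (the tuple $(A_1,\ldots,A_{k-1})$ being then empty, i.e. $\mathbf o$). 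Hence the two cases excluded in the statement are precisely the cases $\varphi_0 = 0$.

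First I would settle finiteness. If $\varphi_0 = 0$ then $A^*_k = 0 < \infty$ trivially. If $\varphi_0 \ne 0$, the $k$-th ring domain of $\varphi_0$ is degenerate, so by the geometric description recorded in Remark \ref{converges_problem} (following \cite{FatLauPoe79}) the value $a_k = A^*_k$ equals the $|\varphi_0|^{\frac{1}{2}}$-length of a union of vertical trajectories of $CT(\varphi_0)$ forming a closed curve homotopic to $\gamma_k$. As $CT(\varphi_0)$ is a finite graph of finite total length, this is finite; so $A^*_k < \infty$ in every case.

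For positivity, assume $\#\Gamma \ge 2$ and $(A_1,\ldots,A_{k-1}) \ne \mathbf o$, so that $\varphi_0 \ne 0$. The loop realizing $A^*_k$ inside $CT(\varphi_0)$ is homotopic to $\gamma_k$, which is essential because $\gamma_k$ belongs to the admissible family $\Gamma$; an essential closed curve cannot have zero $|\varphi_0|^{\frac{1}{2}}$-length, whence $A^*_k > 0$. This is simply the positivity of $a_k$ for $\varphi_0 \ne 0$ already noted just after the definition of $a_j$. I expect this to be the step requiring the most care, since it is exactly where the hypotheses enter: one must be sure the minimizing configuration is a genuinely non-contractible loop rather than something collapsible, which is what the degenerate-case description guarantees.

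Finally, for the non-injectivity of $F$ when $\#\Gamma \ge 2$: since $k-1 \ge 1$ I may choose $(A_1,\ldots,A_{k-1}) \ne \mathbf o$, giving $A^*_k > 0$ by the previous step. By Lemma \ref{lower+}, $F(A_1,\ldots,A_{k-1},A_k)$ is independent of $A_k$ for $0 \le A_k \le A^*_k$, and this interval contains more than one point; hence $F$ takes equal values at two distinct arguments and is therefore not injective.
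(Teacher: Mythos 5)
Your positivity argument and your non-injectivity deduction are correct and are essentially the paper's own: positivity comes from Proposition \ref{characterize} together with Remark \ref{0} and the positivity of $a_k$ for a non-zero differential, and non-injectivity from $A^*_k>0$ plus Lemma \ref{lower+}. The genuine problem is your finiteness step. You invoke Proposition \ref{characterize} to write $A^*_k=a_k(\varphi_0)$ and then note that $a_k(\varphi_0)<\infty$; but Proposition \ref{characterize} cannot be used to rule out $A^*_k=\infty$, because its proof presupposes $A^*_k<\infty$: the identity is obtained there by letting $A_k\searrow A^*_k$ in the relation $A_k=a_k(A_k)$ (valid only for $A_k>A^*_k$) and by evaluating $F$ at the point $(A_1,\ldots,A_{k-1},A^*_k)$, and neither step makes sense when $A^*_k=\infty$. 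Since the right-hand side of the characterization is always finite, the characterization in the infinite case is exactly equivalent to the finiteness assertion you are trying to prove, so your argument is circular within the paper's development. The paper avoids this by a separate argument: if $A^*_k=\infty$, then $F(A_1,\ldots,A_{k-1},A_k)=\varphi$ for all $A_k\geq 0$ by Lemma \ref{lower}, hence by the scaling remark $F(A_1/n,\ldots,A_{k-1}/n,1)=\varphi/n^2\rightarrow 0$, while continuity of $F$ (Theorem \ref{F_is_continuous}) forces the limit to be $F(0,\ldots,0,1)\neq 0$ (Remark \ref{0}), a contradiction.

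Your underlying idea can nevertheless be repaired without Proposition \ref{characterize}, giving a route genuinely different from the paper's. For every $A_k$ such that the $k$-th ring domain of $F(A_1,\ldots,A_{k-1},A_k)$ is degenerated, Lemma \ref{lower} gives $F(A_1,\ldots,A_{k-1},A_k)=\varphi_0$, and the clause ``$M_k=0$ implies $A_k\leq a_k$'' of Theorem \ref{Strebel_ext_prob} gives $A_k\leq a_k(\varphi_0)$; taking the supremum over all such $A_k$ yields
\begin{align*}
A^*_k\leq a_k(\varphi_0)=\inf_{\gamma'\sim\gamma_k}\int_{\gamma'}|\varphi_0|^{\frac{1}{2}}<\infty,
\end{align*}
where finiteness holds simply because the infimum is bounded above by the $|\varphi_0|^{\frac{1}{2}}$-length of any fixed compact representative of $\gamma_k$ (your appeal to the finite critical graph via Remark \ref{converges_problem} is more than is needed). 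With this substitution your proof becomes complete, and it is arguably more elementary than the paper's scaling-plus-continuity argument; as written, however, the finiteness step does not stand on its own.
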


\begin{proof}
Proposition $\ref{characterize}$ says that if $A^*_k=0$ then $F(A_1,\ldots ,A_{k-1},0)=0$, so that this situation only happens in the case of either $\# \Gamma =1$ or $(A_1,\ldots ,A_{k-1})=\mathbf o$ by Remark \ref{0}.

If $A^*_k=\infty $, every $F(A_1,\ldots ,A_{k-1},A_k)$ is same for any $A_k\geq 0$, so we denote it by $\varphi $.
In particular, $F(A_1,\ldots ,A_{k-1},n)=\varphi $ is also holds for any $n\in \mathbf N$.
By multiplying the differential by $1/n^2$, so $F(A_1/n,\ldots ,A_{k-1}/n,1)=\varphi /n^2$ tends to $F(0,\ldots ,0,1)=0$ as $n\rightarrow \infty $ then this is a contradiction.
\end{proof}

\subsection{The surface of circumferences $\mathcal A$}

We define the space
\begin{align*}
\mathcal A=\{(A_1^2,\ldots ,A_k^2)\in \mathbf R_{\geq 0}^k\mid F(A_1,\ldots ,A_k)\in JS_0(\Gamma )\},
\end{align*}
and call it the \textit{surface of (the squares of) circumferences} for $\Gamma $ on $\Sigma $.

In Strebel's book \cite{Strebel84}, there are related surfaces called the surface of heights and the surface of moduli.
These are strictly concave and strictly convex, respectively.
The space $\mathcal A$ also has a similar property.
The proof of the following theorem is achieved by similar methods which are used for the surfaces of heights and moduli.

\begin{thm}\label{surface_A}
The space $\mathcal A$ is a smooth convex hyper surface.
\end{thm}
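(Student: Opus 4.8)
The plan is to realize $\mathcal A$ as a level set of one convex function and to extract both convexity and smoothness from the structure of the extremal problem. Define $\Phi:\mathbf R_{\geq 0}^k\to\mathbf R_{\geq 0}$ by $\Phi(x_1,\ldots,x_k)=\|F(\sqrt{x_1},\ldots,\sqrt{x_k})\|$. Since $F(A_1,\ldots,A_k)\in JS_0(\Gamma)$ exactly when its norm equals $1$, we have $\mathcal A=\Phi^{-1}(1)$, so it suffices to analyze $\Phi$. By Theorem \ref{Strebel_ext_prob} the norm $\|F(A_1,\ldots,A_k)\|$ is the maximum of $\sum_{j=1}^kA_j^2\tilde M_j$ taken over all families $\{\tilde R_j\}_{j=1}^k$ of non-overlapping ring domains of type $\Gamma$; setting $x_j=A_j^2$ yields
\[
\Phi(x)=\sup_{\{\tilde R_j\}}\sum_{j=1}^kx_j\tilde M_j ,
\]
a supremum of linear functions of $x$ with non-negative coefficients, equivalently the support function of the set of attainable modulus vectors. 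From this single formula $\Phi$ is convex, non-decreasing in each variable, and positively homogeneous of degree $1$ (matching the scaling $\lambda\varphi=F(\lambda^{\frac{1}{2}}A_1,\ldots,\lambda^{\frac{1}{2}}A_k)$ recorded above), and $\Phi(x)>0$ for $x\neq\mathbf o$ by Remark \ref{0}.

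For the convexity of the hypersurface I would argue that $C=\{x\in\mathbf R_{\geq 0}^k:\Phi(x)\leq 1\}$ is convex, being a sublevel set of the convex function $\Phi$. Homogeneity of degree $1$ together with $\Phi>0$ off the origin shows that $t\mapsto\Phi(tx_0)=t\Phi(x_0)$ is strictly increasing, so each ray from $\mathbf o$ meets $\{\Phi=1\}$ in exactly one point; hence $\mathcal A=\Phi^{-1}(1)$ is the boundary of the convex body $C$ inside the orthant and is a convex hypersurface. The flat faces of $\mathcal A$ come from degeneration: by Lemma \ref{lower}, moving a coordinate $x_j$ while its ring domain stays degenerate leaves $F$, every modulus, and therefore $\Phi$ unchanged, so $\mathcal A$ carries affine pieces; this is why one expects convexity but not strict convexity.

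The smoothness rests on the uniqueness in Theorem \ref{Strebel_ext_prop}: the maximizing family $\{R_j\}$, hence its modulus vector $(M_1,\ldots,M_k)$, is unique. For a support function, differentiability at $x$ is equivalent to uniqueness of the maximizer, and the gradient is then the maximizer itself, so $\Phi$ is differentiable with $\nabla\Phi(x)=(M_1(x),\ldots,M_k(x))$. The continuity of $F$ (Theorem \ref{F_is_continuous}) and of the moduli (Lemma \ref{converges}) makes $x\mapsto(M_1(x),\ldots,M_k(x))$ continuous, so $\Phi\in C^1$. On $\mathcal A$ the identity $\sum_jx_jM_j(x)=\|F(\sqrt{x})\|=\Phi(x)=1$ forces the $M_j(x)$ not to vanish simultaneously, so $\nabla\Phi\neq0$ there; the implicit function theorem then exhibits $\mathcal A$ as a smooth hypersurface.

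The main obstacle is the regularity across the degeneration loci, i.e. the values $x_k=(A^*_k)^2$ of Proposition \ref{characterize} where a ring domain passes from degenerate to non-degenerate and the corresponding $M_k$ leaves $0$. Lemmas \ref{lower} and \ref{upper} together with the convergence statements keep $\nabla\Phi$ continuous across this locus, which already gives $C^1$ and the manifold structure; upgrading to full smoothness on the curved, non-degenerate part requires the analytic dependence of the moduli on the circumferences, which is exactly where the techniques used by Strebel for the surfaces of heights and moduli are invoked.
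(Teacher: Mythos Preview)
Your approach is correct and takes a somewhat different, more structural route than the paper. The paper works directly with two points $\mathbf A,\mathbf A_0\in\mathcal A$ and uses Theorem~\ref{Strebel_ext_prop} to obtain $\mathbf A\cdot\mathbf M_0\leq 1=\mathbf A_0\cdot\mathbf M_0$ and the symmetric inequality, from which it reads off both $(\mathbf A-\mathbf A_0)\cdot\mathbf M_0\leq 0$ (convexity via supporting hyperplanes) and $\lim_{\mathbf A\to\mathbf A_0}\frac{\mathbf A-\mathbf A_0}{\|\mathbf A-\mathbf A_0\|}\cdot\mathbf M_0=0$ (existence of a tangent plane with continuously varying normal $\mathbf M_0$). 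You instead package the same extremal inequality into the support-function identity $\Phi(x)=\sup\sum x_j\tilde M_j$ and invoke standard convex analysis: convexity of a supremum of linear functions, and differentiability from uniqueness of the maximizer (Danskin/envelope). The two arguments are really the same inequality viewed through different lenses; your version makes the role of the modulus vector as gradient/normal explicit from the outset and cleanly separates the manifold statement (implicit function theorem with $\nabla\Phi\neq 0$) from the convexity, while the paper's version is more elementary and self-contained, needing no background in convex analysis.

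One clarification: your final paragraph worries about ``upgrading to full smoothness'', but in the paper ``smooth'' means exactly what you have already established, namely that $\mathcal A$ has a continuously varying normal vector (equivalently $\Phi\in C^1$). The paper's proof ends with the observation that $\mathbf M_0$ is the normal at $\mathbf A_0$ and varies continuously; this is precisely your statement $\nabla\Phi(x)=(M_1(x),\ldots,M_k(x))$ combined with Theorem~\ref{F_is_continuous} and Lemma~\ref{converges}. No analytic dependence of the moduli is needed or claimed, so the hedge in your last sentence can be dropped.
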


\begin{proof}
In order to prove that $\mathcal A$ is a hyper surface, it suffices to show that $\mathcal A$ is homeomorphic to the set of all non-negative points of the $k-1$ dimensional unit sphere 
\begin{align*}
\mathbf S_{\geq 0}^{k-1}=\{(e_1,\ldots ,e_k)\in \mathbf R^k\mid \sum _{j=1}^ke_j^2=1, e_j\geq 0 \text{ for any }j\}.
\end{align*}
Let $(e_1,\ldots ,e_k)\in \mathbf S_{\geq 0}^{k-1}$ and set $\varphi =F(e_1^{\frac{1}{2}},\ldots ,e_k^{\frac{1}{2}})\in JS(\Gamma )$.
By the normalization, $(e_1/\|\varphi \|,\ldots ,e_k/\|\varphi \|)\in \mathcal A$.
If $(e_1,\ldots ,e_k)$ varies continuously on $\mathbf S_{\geq 0}^{k-1}$, $\varphi $ is also on $JS(\Gamma )$ by the continuity of $F$.
Since $\varphi $ also converges in norm by Lemma \ref{converges}, the norm $\|\varphi \|$ also varies continuously.
Therefore $(e_1/\|\varphi \|,\ldots ,e_k/\|\varphi \|)$ does, and we obtain the continuous mapping $\mathbf S_{\geq 0}^{k-1}$ to $\mathcal A$ which maps $(e_1,\ldots ,e_k)$ to $(e_1/\|\varphi \|,\ldots ,e_k/\|\varphi \|)$.
The inverse is easily constructed as the mapping of $(e'_1,\ldots ,e'_k)\in \mathcal A$ to $(e'_1/\lambda ,\ldots ,e'_k/\lambda )\in \mathbf S_{\geq 0}^{k-1}$ where $\lambda =(\sum _{j=1}^k{e'_j}^2)^{\frac{1}{2}}$, and is obviously continuous, so the set $\mathcal A$ is a hyper surface on the Euclidean space $\mathbf R^k$.

The convexity and the smoothness of $\mathcal A$ is as follows.
Let $\mathbf A=(A_1^2,\dots ,A_k^2)$ and $\mathbf A_0=(A_{01}^2,\ldots ,A_{0k}^2)$ be any points in $\mathcal A$.
We now consider as $\mathbf A\rightarrow \mathbf A_0$.
Set $\varphi =F(A_1,\dots ,A_k)$ and $\varphi _0=F(A_{01},\ldots ,A_{0k})$, and let $\mathbf M=(M_1,\ldots ,M_k)$ and $\mathbf M_0=(M_{01},\ldots ,M_{0k})$ be vectors of moduli by $\varphi $ and $\varphi _0$, respectively.
We also denote by $a_j$ and $a_{0j}$ the quantities by $\varphi $ and $\varphi _0$, respectively.
By Theorem \ref{Strebel_ext_prop},
\begin{align*}
1=\|\varphi \|=\sum _{j=1}^kA_j^2M_j=\sum _{j=1}^ka_j^2M_j\geq \sum _{j=1}^ka_j^2M_{0j}\geq \sum _{j=1}^kA_j^2M_{0j}.
\end{align*}
Therefore, we can describe the above inequality as follows with the case of $\varphi _0$, by using $\cdot $ which is the inner product on the vector space $\mathbf R^k$:
\begin{align*}
1=\mathbf A\cdot \mathbf M\geq \mathbf A\cdot \mathbf M_0,\ 1=\mathbf A_0\cdot \mathbf M_0\geq \mathbf A_0\cdot \mathbf M.
\end{align*}
By combining the inequalities, it holds $\mathbf A\cdot \mathbf M_0\leq \mathbf A_0\cdot \mathbf M_0$ then
\begin{align}
(\mathbf A-\mathbf A_0)\cdot \mathbf M_0\leq 0.\label{convex_proof}
\end{align}
This means that the angle between the vectors $\mathbf M_0$ and $\mathbf A-\mathbf A_0$ is larger than or equal to $\pi /2$.
We also have $\mathbf A_0\cdot \mathbf M\leq \mathbf A\cdot \mathbf M$ and $-(\mathbf A-\mathbf A_0)\cdot \mathbf M\leq 0$, hence $(\mathbf A-\mathbf A_0)\cdot (\mathbf M_0-\mathbf M)\leq (\mathbf A-\mathbf A_0)\cdot \mathbf M_0\leq 0$.
Therefore
\begin{align*}
\frac{\mathbf A-\mathbf A_0}{\|\mathbf A-\mathbf A_0\|}\cdot (\mathbf M_0-\mathbf M)\leq \frac{\mathbf A-\mathbf A_0}{\|\mathbf A-\mathbf A_0\|}\cdot \mathbf M_0\leq 0.
\end{align*}
Let $\mathbf A\rightarrow \mathbf A_0$ then $\varphi \rightarrow \varphi _0$ hence $\mathbf M\rightarrow \mathbf M_0$ by the continuity of $F$ and Lemma \ref{converges}.
We have
\begin{align}
\frac{\mathbf A-\mathbf A_0}{\|\mathbf A-\mathbf A_0\|}\cdot \mathbf M_0\rightarrow 0\label{normal_proof}
\end{align}
as $\mathbf A\rightarrow \mathbf A_0$.
This says that $\mathbf M_0$ is a normal vector at $\mathbf A_0$ on $\mathcal A$, and the normal vector also varies continuously.
By (\ref{convex_proof}) and (\ref{normal_proof}), we conclude that the surface $\mathcal A$ is convex and smooth.
\end{proof}

The surface $\mathcal A$ is convex, but is not strictly convex if $k\geq 2$.
Let us consider $(A_1^2,\ldots ,A_{k-1}^2,x^2)\in \mathbf R_{\geq 0}^k$ such that $(A_1,\ldots ,A_{k-1})\not =\mathbf o$ is fixed for $k\geq 2$.
Then $A^*_k>0$ by Proposition \ref{star_range}, and if $0\leq x\leq A^*_k$, any $(A_1,\ldots ,A_{k-1},x)$ determines a common Jenkins-Strebel differential $\varphi $ by Lemma \ref{lower+}.
Therefore $(A_1^2/\|\varphi \|,\ldots ,A_{k-1}^2/\|\varphi \|,x^2/\|\varphi \|)\in \mathcal A$ for any $0\leq x\leq A^*_k$.
We conclude that there exist some Euclidean segments on $\mathcal A$ such that each one is parallel to an axis of $\mathbf R_{\geq 0}^k$.
This method is used again in \S \ref{dim2}.

\section{The surface of circumferences with Teichm\"uller theory}\label{with_Teich}

Let us consider the surface of circumferences on each Riemann surface with any complex structure.
More precisely, the surface is determined for any point of the Teichm\"uller space.
For the Teichm\"uller space, see \cite{ImaTan92}.

\subsection{Teichm\"uller space}

The \textit{Teichm\"uller space} of $\Sigma $ is represented by the set of all pairs of a Riemann surface $S$ and a quasiconformal mapping $f:\Sigma \rightarrow S$ with the equivalence relation.
Let $(S,f)$ denote such a pair.
Any two pairs $(S_1,f_1)$ and $(S_2,f_2)$ are equivalent if and only if there exists a conformal mapping $h:S_1\rightarrow S_2$ which is homotopic to $f_2\circ f_1^{-1}$.
We also use $(S,f)$ as the equivalence class, and denote by $\mathcal T$ the Teichm\"uller space.

The Teichm\"uller space $\mathcal T$ has the \textit{Teichm\"uller distance} $d_{\mathcal T}$ defined as follows:
for any $p_1=(S_1,f_1)$ and $p_2=(S_2,f_2)$ in $\mathcal T$,
\begin{align*}
d_{\mathcal T}(p_1,p_2)=\frac{1}{2}\log \inf _{h}K(h),
\end{align*}
where the infimum ranges over all quasiconformal mappings $h:S_1\rightarrow S_2$ which are homotopic to $f_2\circ f_1^{-1}$, and $K(h)$ is the maximal quasiconformal dilatation of each $h$.

We call a geodesic on $\mathcal T$ with respect to the Teichm\"uller distance $d_{\mathcal T}$ a \textit{Teichm\"uller geodesic}.
The Teichm\"uller space $\mathcal T$ is a unique geodesic space, that is, for any two points in $\mathcal T$, there is a unique Teichm\"uller geodesic which passes through the points.

\subsection{The definition of $\mathcal A$ on the Teichm\"uller space}

Let $S$ be a Riemann surface, $f:\Sigma \rightarrow S$ a quasiconformal mapping.
For any admissible curve family $\Gamma =\{\gamma _1,\ldots ,\gamma _k\}$ on $\Sigma $, the set $f(\Gamma )=\{f(\gamma _1),\ldots ,f(\gamma _k)\}$ is also an admissible curve family on $S$.
We denote by $JS(S,f(\Gamma ))$ the set of all Jenkins-Strebel differentials on $S$ of type $f(\Gamma )$ with the $0$-quadratic differential, and by $F_{S,f(\Gamma )}:\mathbf R_{\geq 0}^k\rightarrow JS(S,f(\Gamma ))$ the mapping defined similarly to $F$.
Let $\mathcal A(S,f(\Gamma ))$ denote the surface of circumferences for $f(\Gamma )$ on $S$.

The set $JS(\Gamma )$, the mapping $F$, and the surface $\mathcal A$ in \S \ref{surface_of_circumferences} are written by $JS(\Sigma ,\Gamma )$, $F_{\Sigma ,\Gamma }$, and $\mathcal A(\Sigma ,\Gamma )$ in the manner, respectively.

\begin{thm}\label{invariant}
The surface of circumferences is conformally invariant, that is, if $h:\Sigma \rightarrow S$ is a conformal mapping then $\mathcal A(\Sigma ,\Gamma )=\mathcal A(S,h(\Gamma ))$.
\end{thm}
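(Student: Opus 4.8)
The plan is to exploit the pullback of holomorphic quadratic differentials under the conformal map $h$. For a holomorphic quadratic differential $\psi =\psi (w)dw^2$ on $S$, write $h^*\psi =\psi (h(z))(h'(z))^2dz^2$ in local coordinates; since $h$ is conformal this is again a holomorphic quadratic differential on $\Sigma $, and the area forms satisfy $|h^*\psi |=h^*|\psi |$, so that $\|h^*\psi \|=\|\psi \|$. Thus $h^*$ is a norm-preserving linear bijection from the holomorphic quadratic differentials on $S$ onto those on $\Sigma $, with inverse $(h^{-1})^*$. First I would verify that $h^*$ respects the Jenkins-Strebel structure: a conformal map carries vertical trajectories of $\psi $ to vertical trajectories of $h^*\psi $ and critical points to critical points, so $CT(h^*\psi )=h^{-1}(CT(\psi ))$. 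Hence $\psi \in JS(S,h(\Gamma ))$ if and only if $h^*\psi \in JS(\Sigma ,\Gamma )$, and the characteristic ring domain of $h^*\psi $ of type $\gamma _j$ is the $h^{-1}$-image of the characteristic ring domain of $\psi $ of type $h(\gamma _j)$.

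Next I would check that all the relevant quantities are preserved. The modulus is a conformal invariant, so the moduli $M_j$ of $h^*\psi $ equal those of $\psi $. For the $a_j$, a change of variables gives $\int _{\gamma '}|h^*\psi |^{\frac 12}=\int _{h(\gamma ')}|\psi |^{\frac 12}$, and $\gamma '\sim \gamma _j$ holds exactly when $h(\gamma ')\sim h(\gamma _j)$; taking the infimum shows that the quantity $a_j$ for $h^*\psi $ on $\Sigma $ equals the corresponding quantity for $\psi $ on $S$. In particular $h^*$ restricts to a bijection $JS_0(S,h(\Gamma ))\to JS_0(\Sigma ,\Gamma )$ on the unit-norm loci.

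The key step is to show that $h^*$ intertwines the two extremal maps, namely
\begin{align*}
F_{\Sigma ,\Gamma }(A_1,\ldots ,A_k)=h^*\bigl(F_{S,h(\Gamma )}(A_1,\ldots ,A_k)\bigr)
\end{align*}
for every $(A_1,\ldots ,A_k)\in \mathbf R_{\geq 0}^k$. Setting $\psi =F_{S,h(\Gamma )}(A_1,\ldots ,A_k)$, I would note that $h^{-1}$ carries any family $\{\tilde R_j\}_{j=1}^k$ of non-overlapping ring domains of type $\Gamma $ on $\Sigma $ to a family of non-overlapping ring domains of type $h(\Gamma )$ on $S$ with the same moduli, and conversely; hence the value of the functional $\sum _{j=1}^kA_j^2\tilde M_j$ is unchanged under $h^*$. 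Since the characteristic ring domains of $\psi $ maximize this functional on $S$, their $h^{-1}$-images maximize it among all type-$\Gamma $ families on $\Sigma $, and the uniqueness assertion of Theorem \ref{Strebel_ext_prob} identifies $h^*\psi $ with $F_{\Sigma ,\Gamma }(A_1,\ldots ,A_k)$.

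Finally, combining norm-preservation with this intertwining yields the theorem: $(A_1^2,\ldots ,A_k^2)\in \mathcal A(\Sigma ,\Gamma )$ means $\|F_{\Sigma ,\Gamma }(A_1,\ldots ,A_k)\|=1$, which by the two previous steps is equivalent to $\|F_{S,h(\Gamma )}(A_1,\ldots ,A_k)\|=1$, i.e.\ $(A_1^2,\ldots ,A_k^2)\in \mathcal A(S,h(\Gamma ))$. The only point requiring care is the extremal identification in the third step; everything there reduces to the conformal invariance of the modulus together with the bijection between non-overlapping ring-domain families of the two types, after which the uniqueness in Theorem \ref{Strebel_ext_prob} does the work. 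I do not expect a genuine obstacle here, only careful bookkeeping of the type-$\Gamma $ versus type-$h(\Gamma )$ correspondence under $h$ and $h^{-1}$.
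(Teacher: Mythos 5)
Your proposal is correct, but it identifies $h^{*}\psi$ with $F_{\Sigma ,\Gamma }(A_1,\ldots ,A_k)$ by a different mechanism than the paper. You transport the variational problem itself: competitor families of type $\Gamma $ on $\Sigma $ and of type $h(\Gamma )$ on $S$ correspond bijectively under $h$ with equal moduli, so the maximizing families correspond, and the uniqueness clause of Theorem \ref{Strebel_ext_prob} finishes. The paper instead pushes forward $\varphi =F_{\Sigma ,\Gamma }(A_1,\ldots ,A_k)$ to $h_{*}(\varphi )\in JS_0(S,h(\Gamma ))$ and proves $F_{S,h(\Gamma )}(A_1,\ldots ,A_k)=h_{*}(\varphi )$ by a chain of inequalities that applies the extremal property (Theorem \ref{Strebel_ext_prop}) twice, once on each surface; the chain collapses to equality, the equality case forces the characteristic ring domains to coincide, and equality of norms pins down the differential---the same inequality-chain pattern used in the proofs of Theorem \ref{F_is_continuous} and Lemmas \ref{lower} and \ref{upper}. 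Your route is more structural and immediately yields the intertwining $h_{*}\circ F_{\Sigma ,\Gamma }=F_{S,h(\Gamma )}$, which the paper only records as a remark after its proof. Its one delicate point is the one you flag yourself: the uniqueness in Theorem \ref{Strebel_ext_prob} cannot characterize $F_{\Sigma ,\Gamma }(A_1,\ldots ,A_k)$ merely as the Jenkins-Strebel differential whose ring domains maximize the sum, since every positive scalar multiple shares those ring domains; the characterization also involves the circumference conditions $A_j=a_j$ when $M_j>0$ and $A_j\leq a_j$ when $M_j=0$. These do transfer under $h^{*}$ by the invariance of the $a_j$ and $M_j$ established in your second paragraph, so the identification goes through, but that step should be cited explicitly (alternatively, conclude as the paper does: same ring domains plus equal norms). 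Finally, a small slip of direction: it is $h$, not $h^{-1}$, that carries ring-domain families on $\Sigma $ to families on $S$.
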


\begin{proof}
Let $(A_1^2,\ldots ,A_k^2)\in \mathcal A(\Sigma ,\Gamma )$, and set $F_{\Sigma ,\Gamma }(A_1,\ldots ,A_k)=\varphi \in JS_0(\Sigma ,\Gamma )$.
For any conformal mapping $h:\Sigma \rightarrow S$, we consider the push-forward $h_*(\varphi )$, it is a Jenkins-Strebel differential on $S$ of type the admissible curve family $h(\Gamma )$, and satisfies $\|h_*(\varphi )\|=\|\varphi \|=1$, so that $h_*(\varphi )\in JS_0(S,h(\Gamma ))$.

Let $a_j=\inf _{\gamma '\sim \gamma _j}\int _{\gamma '}|\varphi |^{\frac{1}{2}}$ and $M_j$ be the modulus determined by $\varphi $, then $A_j\leq a_j$ for any $j=1,\ldots ,k$.
We notice that $a_j=\inf _{\gamma '\sim h(\gamma _j)}\int _{\gamma '}|h_*(\varphi )|^{\frac{1}{2}}$ and $M_j$ is also the modulus of the $j$-th characteristic ring domain of $h_*(\varphi )$ for any $j=1,\ldots ,k$.

We now consider $\varphi '=F_{S,h(\Gamma )}(A_1,\ldots ,A_k)\in JS(S,h(\Gamma ))$, and suppose that $a'_j=\inf _{\gamma '\sim h(\gamma _j)}\int _{\gamma '}|\varphi '|^{\frac{1}{2}}$ and $M'_j$ is the modulus determined by $\varphi '$, then also $A_j\leq a'_j$ for any $j=1,\ldots ,k$.
By Theorem \ref{Strebel_ext_prop},
\begin{align*}
\|\varphi '\|&=\sum _{j=1}^k{a'_j}^2M'_j\geq \sum _{j=1}^k{a'_j}^2M_j\geq \sum _{j=1}^kA_j^2M_j=\|\varphi \|\\
&=\|h_*(\varphi )\|=\sum _{j=1}^ka_j^2M_j\geq \sum _{j=1}^ka_j^2M'_j\geq \sum _{j=1}^kA_j^2M'_j=\|\varphi '\|,
\end{align*}
so it gives $\|\varphi '\|=\|h_*(\varphi )\|=1$ and $\varphi '=h_*(\varphi )$ then $F_{S,h(\Gamma )}(A_1,\ldots ,A_k)=h_*(\varphi )\in JS_0(S,h(\Gamma ))$.

We conclude that $(A_1^2,\ldots ,A_k^2)\in \mathcal A(S,h(\Gamma ))$.
Conversely, any point in $\mathcal A(S,h(\Gamma ))$ is also in $\mathcal A(\Sigma ,\Gamma )$ by using the similar method, then we have $\mathcal A(\Sigma ,\Gamma )=\mathcal A(S,h(\Gamma ))$.
\end{proof}

\begin{rem}
By the proof of the above theorem, we can also obtain $h_*(JS(\Sigma ,\Gamma ))=JS(S,h(\Gamma ))$ and $h_*\circ F_{\Sigma ,\Gamma }=F_{S,h(\Gamma )}$.
\end{rem}

We obtain the following corollary immediately by the above theorem and the definition of the Teichm\"uller space.

\begin{cor}\label{invariant_corollary}
The surface of circumferences is well-defined on the Teichm\"uller space, that is, if $(S_1,f_1)=(S_2,f_2)$ in $\mathcal T$ then $\mathcal A(S_1,f_1(\Gamma ))=\mathcal A(S_2,f_2(\Gamma ))$.
\end{cor}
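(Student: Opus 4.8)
The plan is to reduce the statement to Theorem~\ref{invariant} together with the observation that the surface of circumferences depends only on the free homotopy classes of the curves in the admissible family. First I would unpack the hypothesis $(S_1,f_1)=(S_2,f_2)$ in $\mathcal T$: by the definition of the Teichm\"uller space, this equality means that there exists a conformal mapping $h:S_1\rightarrow S_2$ homotopic to $f_2\circ f_1^{-1}$. Applying Theorem~\ref{invariant} to this conformal $h$ with the admissible family $f_1(\Gamma )$ on $S_1$ immediately yields
\begin{align*}
\mathcal A(S_1,f_1(\Gamma ))=\mathcal A(S_2,h(f_1(\Gamma ))).
\end{align*}

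Next I would compare the two admissible families $h(f_1(\Gamma ))$ and $f_2(\Gamma )$ on $S_2$. Since $h$ is homotopic to $f_2\circ f_1^{-1}$, composing with $f_1$ on the right gives that $h\circ f_1$ is homotopic to $(f_2\circ f_1^{-1})\circ f_1=f_2$, and hence $h(f_1(\gamma _j))$ is freely homotopic to $f_2(\gamma _j)$ on $S_2$ for every $j=1,\ldots ,k$. Thus $h(f_1(\Gamma ))$ and $f_2(\Gamma )$ are the same admissible curve family up to homotopy.

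The remaining step, which is where the (slight) content of the corollary lies, is to check that $\mathcal A(S_2,\cdot )$ is unchanged when the curve family is replaced by a homotopic one. This I expect to be immediate from the definitions rather than a genuine obstacle: a ring domain is \emph{of type $\gamma $} precisely when its core curve is homotopic to $\gamma $, so replacing each $\gamma _j$ by a freely homotopic curve leaves the notion of \emph{of type $\Gamma $} untouched. Consequently the set $JS(S_2,\cdot )$, the mapping $F_{S_2,\cdot }$, and the normalization defining the surface all coincide for the two families, giving
\begin{align*}
\mathcal A(S_2,h(f_1(\Gamma )))=\mathcal A(S_2,f_2(\Gamma )).
\end{align*}
Chaining the two displayed equalities then yields $\mathcal A(S_1,f_1(\Gamma ))=\mathcal A(S_2,f_2(\Gamma ))$, as desired. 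The only point requiring a little care is the verification underlying the last step: that the quantities entering $F$ and the norm — namely $a_j=\inf _{\gamma '\sim \gamma _j}\int _{\gamma '}|\varphi |^{\frac{1}{2}}$ and the moduli $M_j$ of the characteristic ring domains — are invariants of $\varphi $ together with the homotopy class of $\gamma _j$, which is exactly how they are defined. Hence the construction descends to homotopy classes, and the corollary follows.
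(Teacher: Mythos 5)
Your proposal is correct and follows exactly the route the paper intends: the paper gives no written proof, declaring the corollary ``immediate by the above theorem and the definition of the Teichm\"uller space,'' which is precisely your argument of producing the conformal map $h\simeq f_2\circ f_1^{-1}$, invoking Theorem~\ref{invariant}, and noting that all defining notions (type~$\gamma$, the quantities $a_j$, the moduli $M_j$) depend only on free homotopy classes, so $\mathcal A(S_2,h(f_1(\Gamma)))=\mathcal A(S_2,f_2(\Gamma))$. You have merely made explicit the homotopy-invariance step the paper leaves implicit, which is a sound and complete filling-in of the details.
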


Therefore, we can denote by $\mathcal A(p,\Gamma )$ as $\mathcal A(S,f(\Gamma ))$ for any $p=(S,f)\in \mathcal T$ and any admissible curve family $\Gamma $ on $\Sigma $, since it does not depend on any representation of $p$.
If $\Gamma $ is immediately, we denote by $\mathcal A(p)$ instead of $\mathcal A(p,\Gamma )$.

Finally, we put another property for the surface of circumferences.

\begin{prop}\label{any_vector}
Choose any positive numbers $(a_1,\ldots ,a_k)\in \mathbf R_{>0}^k$.
Then there exists a Riemann surface $S$ homeomorphic to $\Sigma $ such that there is a Jenkins-Strebel differential on $S$ of type $\Gamma $ whose characteristic ring domains have $a_1,\ldots ,a_k$ as their circumferences.
In other words, there exist a constant $\lambda >0$ and a point $p=(S,\id )\in \mathcal T$ such that $(\lambda a_1^2,\ldots ,\lambda a_k^2)\in \mathcal A(p)$ and the corresponding unit norm Jenkins-Strebel differential $F_{S,\Gamma }(\lambda ^{\frac{1}{2}}a_1,\ldots ,\lambda ^{\frac{1}{2}}a_k)$ has all non-degenerated characteristic ring domains.
\end{prop}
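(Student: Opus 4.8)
The plan is to reduce the assertion to a single constructive statement and then build the required flat surface by hand, injecting the prescribed circumferences through quadratic differentials with double poles — the infinite-norm objects flagged in the introduction.

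\textbf{Reduction.} First I would observe that it suffices to produce one Riemann surface $S$ homeomorphic to $\Sigma $ together with a Jenkins–Strebel differential $\Psi $ on $S$ of type $\Gamma $ all of whose characteristic ring domains are non-degenerated and have circumferences exactly $a_1,\ldots ,a_k$. Indeed, for such a $\Psi $ the quantities $\inf _{\gamma '\sim \gamma _j}\int _{\gamma '}|\Psi |^{\frac{1}{2}}$ equal the circumferences $a_j$, so by the extremal property (Theorem \ref{Strebel_ext_prop}) the family of ring domains of $\Psi $ maximizes $\sum _j a_j^2\tilde M_j$ among all non-overlapping families of type $\Gamma $; the uniqueness in Theorem \ref{Strebel_ext_prob} then forces $\Psi =F_{S,\Gamma }(a_1,\ldots ,a_k)$. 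Setting $\lambda =\|\Psi \|^{-1}$ and $p=(S,\id )$, the scaling relation $\lambda \Psi =F_{S,\Gamma }(\lambda ^{\frac{1}{2}}a_1,\ldots ,\lambda ^{\frac{1}{2}}a_k)$ established earlier yields a unit-norm differential whose ring domains are still all non-degenerated, so $(\lambda a_1^2,\ldots ,\lambda a_k^2)\in \mathcal A(p)$ and this is precisely the point claimed. Thus the whole burden falls on constructing $(S,\Psi )$.

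\textbf{Construction.} To build $(S,\Psi )$ I would realize each $\gamma _j$ as a flat cylinder of circumference $a_j$ and then attach these cylinders to the remaining topology. Cutting $\Sigma $ along $\Gamma $ produces finitely many bordered pieces; capping every boundary circle with a puncture turns each piece into a punctured Riemann surface, on which I place a meromorphic quadratic differential with a double pole at each new puncture, normalized so that the half-infinite cylinder surrounding the puncture arising from $\gamma _j$ has circumference exactly $a_j$. These differentials have infinite norm — which is exactly why they are excluded elsewhere in the paper. Truncating each half-infinite cylinder at a finite but arbitrary height and gluing the two truncated cylinders associated with a given $\gamma _j$ (both of circumference $a_j$) produces a genuine finite cylinder of circumference $a_j$. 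The decisive feature is that the circumference of $\gamma _j$ is fixed once and for all at its double poles, so cylinders belonging to different curves never have to be length-matched against each other, and \emph{any} positive vector $(a_1,\ldots ,a_k)$ can be prescribed.

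\textbf{Verification and the main obstacle.} After the gluing, every original puncture of $\Sigma $ carries at most a simple pole, the assembled differential $\Psi $ has finite norm, and away from the $k$ cylinders the surface collapses to a critical graph; hence $\Psi $ is Jenkins–Strebel of type $\Gamma $ with $k$ non-degenerated ring domains of circumferences $a_j$, and $S$ is homeomorphic to $\Sigma $ by construction. The hard part will be the choice of differentials on the capped pieces: I must guarantee that on each piece a double-pole differential exists whose \emph{only} characteristic ring domains are the prescribed puncture-collars, so that no extra cylinder survives and no collar degenerates, while the circumferences remain freely prescribable. For pieces with positive-dimensional Teichm\"uller space this freedom is available by moving the complex structure, but rigid pieces — a thrice-punctured sphere, where the three collar circumferences would have to obey a triangle inequality — must be treated separately, either by enlarging the critical graph so that the maximal ring domains are allowed to be fat and cross the former cutting curves, or by checking directly through Theorem \ref{Strebel_ext_prop} that the glued object is genuinely the extremal differential $F_{S,\Gamma }(a_1,\ldots ,a_k)$ with no degenerate domain. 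Making airtight the claim that the locally built collars really are the \emph{global} maximal ring domains is the step where the argument is most delicate.
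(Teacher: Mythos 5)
Your construction is in essence the paper's own: cutting $\Sigma$ along $\Gamma$ and capping the boundary circles with punctures is the same operation as the paper's pinching of $\Gamma$ to nodes; you then place double-pole differentials whose half-infinite cylinders around the new punctures have the prescribed circumferences, truncate along closed vertical trajectories, glue the matching pairs, and identify the glued differential with $F_{S,\Gamma }(a_1,\ldots ,a_k)$ via Theorem \ref{Strebel_ext_prop} and the uniqueness in Theorem \ref{Strebel_ext_prob}; your reduction paragraph is exactly the paper's closing argument. The problem is that the step you leave open as ``the main obstacle'' is precisely the heart of the proposition, and the paper closes it with a citation you never invoke: Strebel's existence theorem for differentials with second-order poles (Theorem \ref{punctures}, \cite[Theorem 23.5]{Strebel84}). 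That theorem says that on each capped piece, for an \emph{arbitrary} assignment of positive numbers to the designated punctures, there is a unique differential whose critical-graph complement consists exactly of the punctured disks around those punctures, with closed trajectories of exactly the prescribed lengths. Without this (or an equivalent existence statement), your proof does not establish that every positive vector $(a_1,\ldots ,a_k)$ is realizable, which is the whole content of Proposition \ref{any_vector}.

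Moreover, the obstruction you anticipate is not real, so the case analysis you sketch is unnecessary and partly misguided. On a thrice-punctured sphere there is no triangle inequality restricting the three collar circumferences: when the inequality fails, the combinatorial type of the critical graph changes (a ``dumbbell'' instead of a triangle), but existence and uniqueness persist for all positive data, which is exactly why Theorem \ref{punctures} carries no hypothesis on $(a_1,\ldots ,a_m)$. Hence no deformation of the complex structure of the pieces is needed, and your first fallback --- enlarging the critical graph so that maximal ring domains ``cross the former cutting curves'' --- should be discarded, since such ring domains would have core curves not homotopic to any $\gamma _j$ and the resulting differential would no longer be of type $\Gamma $. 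Your final worry, that the locally built collars might not be the global maximal ring domains, also evaporates once Theorem \ref{punctures} is quoted: on each piece the complement of the critical graph is \emph{exactly} the union of the puncture collars, so after truncation and gluing the complement of $CT(\Psi )$ in $S$ is exactly the union of the $k$ glued cylinders, and the components of this complement are by definition the characteristic ring domains of $\Psi $. With that one citation inserted, your argument becomes complete and coincides with the paper's proof.
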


The following theorem is the key of the proof of Proposition \ref{any_vector}.

\begin{thm}{(\cite[Theorem 23.5]{Strebel84})}\label{punctures}
Let $p_1,\ldots ,p_m$ be (not necessarily all) punctures of $\Sigma $ and fix any $(a_1,\ldots ,a_m)\in \mathbf R_{>0}^m$.
Then there exists a unique holomorphic quadratic differential $\varphi $ on $\Sigma $ such that each component of $\Sigma -CT(\varphi )$ is a punctured disk with each puncture $p_j$, all vertical trajectories of $\varphi $ in each the punctured disk are closed and surround the puncture, such that their lengths are all $a_j$ with respect to the metric $|\varphi |^{\frac{1}{2}}$, for any $j=1,\ldots ,m$.
\end{thm}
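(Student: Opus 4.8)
The plan is to exploit the freedom in the choice of complex structure: since we are allowed to select \emph{any} Riemann surface $S$ homeomorphic to $\Sigma$, I would build $S$ together with its differential out of standard flat cylinders, using Theorem \ref{punctures} to supply the complementary flat structure. Concretely, I first cut $\Sigma$ along the curves $\gamma_1,\ldots,\gamma_k$; this produces a (possibly disconnected) bordered surface whose $2k$ boundary circles come in pairs, one pair $c_j^+,c_j^-$ for each $\gamma_j$. Capping every boundary circle with a punctured disk turns these circles into punctures $p_j^+,p_j^-$ and yields a punctured surface $\widehat\Sigma$, on whose components I am free to impose any complex structure. On each component I then apply Theorem \ref{punctures}, prescribing the circumference $a_j$ at \emph{both} punctures $p_j^+$ and $p_j^-$ (so that the remaining, original punctures of $\Sigma$, if any, automatically lie on the critical graph as simple poles). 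This gives a differential $\widehat\varphi$ each of whose complementary regions is a half-infinite cylinder of circumference $a_j$ around some $p_j^\pm$. Since the two half-infinite cylinders attached to $\gamma_j$ have the \emph{same} circumference $a_j$, I truncate each at a finite height and glue them along the truncation circles by a rotation; this plumbing replaces the node coming from $\gamma_j$ by a genuine finite flat cylinder, and the local form $d\zeta^2$ of $\widehat\varphi$ extends holomorphically across the seam. Undoing all the cuts reassembles a surface $S$ homeomorphic to $\Sigma$ carrying a holomorphic quadratic differential $\psi$.

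By construction the complement of the $k$ finite cylinders is exactly the image of the critical graph of $\widehat\varphi$, which is a null set, so $\psi$ is a Jenkins--Strebel differential of type $\Gamma$ whose characteristic ring domains are precisely these cylinders; they are all non-degenerate with circumferences $a_1,\ldots,a_k$, and $\psi$ has finite norm $\|\psi\|=\sum_{j=1}^k a_jh_j$ because the infinite ends were discarded. To reach the statement as worded I then translate this into the language of $\mathcal A$. Put $p=(S,\id)\in\mathcal T$ and $\lambda=1/\|\psi\|$, and consider $\lambda\psi\in JS_0(S,\Gamma)$, whose moduli $M_j$ are all positive and whose $j$-th circumference $a_j(\lambda\psi)$ equals $A_j:=\lambda^{\frac12}a_j$. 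The extremal property (Theorem \ref{Strebel_ext_prop}) applied to $\lambda\psi$ gives, for every non-overlapping family $\{\widetilde R_j\}$ of type $\Gamma$ with moduli $\widetilde M_j$,
\begin{align*}
\sum_{j=1}^k A_j^2\widetilde M_j=\sum_{j=1}^k a_j(\lambda\psi)^2\widetilde M_j\leq \sum_{j=1}^k a_j(\lambda\psi)^2M_j=\|\lambda\psi\|=\sum_{j=1}^k A_j^2M_j,
\end{align*}
so the cylinders of $\lambda\psi$ already maximize $\sum_j A_j^2\widetilde M_j$. By the uniqueness in Theorem \ref{Strebel_ext_prob} this forces $F_{S,\Gamma}(A_1,\ldots,A_k)=\lambda\psi$, whence $(\lambda a_1^2,\ldots,\lambda a_k^2)=(A_1^2,\ldots,A_k^2)\in\mathcal A(p)$ and the corresponding unit norm differential $\lambda\psi$ has all non-degenerate characteristic ring domains, which is exactly the asserted conclusion.

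The main obstacle is the surgery in the first step rather than the $\mathcal A$-bookkeeping in the second. I must check that the plumbed object is globally a \emph{finite norm} Jenkins--Strebel differential of type $\Gamma$ on a surface homeomorphic to $\Sigma$: that the gluing combinatorics reproduce the topology of $\Sigma$ (so that the core of the $j$-th cylinder is isotopic to $\gamma_j$ and $\psi$ is genuinely of type $\Gamma$), that no two-dimensional component survives outside the cylinders so that no unwanted ring domain or recurrent region appears, and that the original punctures of $\Sigma$ contribute only simple poles. A subtler point is that Theorem \ref{punctures} is stated under $3g-3+n>0$, whereas cutting along $\Gamma$ can create exceptional components such as thrice-punctured spheres; here I would use that I may choose the complex structures of the capped pieces freely, and that even a rigid thrice-punctured sphere admits, for \emph{arbitrary} positive circumferences, such a differential (realized by a theta-, figure-eight-, or eyeglasses-type critical graph according to whether the three prescribed numbers satisfy strict, equality, or reversed triangle relations), so that the prescription of $a_j$ at all relevant punctures is always attainable. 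Once this topological and existence input is secured, the continuity and extremal machinery developed in \S\ref{surface_of_circumferences} delivers the final statement.
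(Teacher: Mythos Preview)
Your proposal does not prove the stated Theorem~\ref{punctures} at all; it \emph{assumes} that theorem and uses it as a black box. What you have written is a proof of Proposition~\ref{any_vector} (the existence of a Riemann surface $S$ carrying a Jenkins--Strebel differential of type $\Gamma$ with prescribed circumferences), not of the Strebel result about second-order-pole differentials with prescribed circumferences at punctures. The paper itself gives no proof of Theorem~\ref{punctures}; it is quoted from \cite[Theorem 23.5]{Strebel84}.

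If your intended target was in fact Proposition~\ref{any_vector}, then your argument is essentially the same as the paper's: pinch (equivalently, cut and cap) along $\Gamma$ to obtain a punctured surface, invoke Theorem~\ref{punctures} to get half-infinite flat cylinders of circumference $a_j$ around each pair of new punctures, truncate along closed vertical trajectories and reglue the matching boundaries, then appeal to Theorems~\ref{Strebel_ext_prop} and~\ref{Strebel_ext_prob} to identify the resulting differential with $F_{S,\Gamma}(a_1,\ldots,a_k)$ up to scale. Your additional remarks about the exceptional thrice-punctured-sphere components and the topological bookkeeping of the gluing are genuine checks that the paper glosses over, but the overall line of argument is identical.
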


The differential $\varphi $ constructed by the above theorem has poles of order $2$ in the punctures.
Therefore, the norm of $\varphi $ is infinity.

\begin{proof}[Proof of Proposition \ref{any_vector}]
Let $(a_1,\ldots ,a_k)\in \mathbf R_{>0}^k$ and consider the pinching each the curves $\Gamma $ on $\Sigma $ to nodes.
The resulting surface is a Riemann surface with nodes.
Let denote $X$ the surface minus all nodes, then $X$ is the union of punctured Riemann surfaces (possibly a single surface).
Let $p_j^1,p_j^2$ be the pair of punctures of $X$ corresponding to the collapsed $\gamma _j$ for any $j=1,\ldots ,k$.
We apply Theorem \ref{punctures} to $X$ such that it assigns the positive $a_j$ to $p_j^1,p_j^2$ for any $j=1,\ldots ,k$.
Then we obtain a holomorphic quadratic differential $\varphi '$ on $X$ such that each component of $X-CT(\varphi ')$ is a punctured disk with each puncture $p_j^l$, all vertical trajectories of $\varphi '$ in each the punctured disk are closed and surround the puncture, such that their lengths are all $a_j$ which does not depend on $l=1,2$, with respect to the metric $|\varphi '|^{\frac{1}{2}}$, for any $j=1,\ldots ,k$.

We next cut off each small punctured disk with puncture $p_j^l$ from $X$ for any $j=1,\ldots ,k$ and $l=1,2$ such that the boundary curve of the disk is a closed vertical trajectory of $\varphi '$.
The resulting surface has such the boundary curves, and each the pair of the curves have a common length $a_j$ with respect to the metric $|\varphi '|^{\frac{1}{2}}$, for any $j=1,\ldots ,k$.
Therefore, we can glue all the pairs of the boundaries, so obtain a standard Riemann surface $S$ with a Jenkins-Strebel differential $\varphi $ on $S$ of type $\Gamma $ induced by $\varphi '$, whose characteristic ring domains have the circumferences $a_j$ for any $j=1,\ldots ,k$, in particular, such ring domains are all non-degenerated.
We also obtain a natural mapping $\Sigma $ onto $S$ as the identity mapping, so that we regard $\varphi $ as in $JS(S,\Gamma )$.
It holds $F_{S,\Gamma }(a_1,\ldots ,a_k)=\varphi $ by Theorem \ref{Strebel_ext_prop}, then $(a_1^2/\|\varphi \|,\ldots ,a_k^2/\|\varphi \|)\in \mathcal A(S,\Gamma )$.
Set $p=(S,\id )$ and $\lambda =1/\|\varphi \|$, we obtain the desired result.
\end{proof}

\subsection{The surface of circumferences for two curves}\label{dim2}

The form of the surface of circumferences for an admissible curve family which only contains two curves, can be described.
Let $\Gamma =\{\gamma _1,\gamma _2\}$ be an admissible curve family on $\Sigma $, and we fix $\Gamma $ in this section.
Of coarse we exclude any $\Sigma $ such that $3g-3+n=1$.

We first treat the surface of circumferences for $\Gamma $ on $\Sigma $.
For simplicity, we again use the symbols $JS(\Gamma ), F, \mathcal A$ which are denoted in \S \ref{surface_of_circumferences}.

Let us consider the ray $\{(1,y^2)\mid y\geq 0\}$ in $\mathbf R_{\geq 0}^2$.
By Lemma \ref{lower+} and Proposition \ref{star_range}, there is $y^*>0$ such that every $F(1,y)$ is a same Jenkins-Strebel differential of type $\Gamma $, and it has the only one characteristic ring domain of type $\gamma _1$, for any $0\leq y\leq y^*$.
We denote the differential by $\varphi $.
The unit norm differential $\varphi _1=\varphi /\|\varphi \|$ is in $JS_0(\Gamma )$, and it satisfies $F(1/\|\varphi \|^{\frac{1}{2}},y/\|\varphi \|^{\frac{1}{2}})=\varphi _1$ for any $0\leq y\leq y^*$.
This implies that the segment $\{(1/\|\varphi \|,y^2/\|\varphi \|)\mid 0\leq y\leq y^*\}$ is contained in $\mathcal A$.

We would like to give explicit descriptions of the coordinates $1/\|\varphi \|$ and ${y^*}^2/\|\varphi \|$ which are appeared in boundaries of the segment.

The differential $\varphi _1$ also has the only one characteristic ring domain of type $\gamma _1$.
Then let $a_1,a_2,M_1>0,M_2=0$ denote the quantities by $\varphi _1$, so that $a_1=1/\|\varphi \|^{\frac{1}{2}}$ and $a_1^2M_1+a_2^2\cdot 0=\|\varphi _1\|=1$, therefore $1/\|\varphi \|=a_1^2=1/M_1=\ext _{\Sigma }(\gamma _1)$, this is the extremal length of $\gamma _1$ on $\Sigma $.

On the other hand, by Proposition \ref{characterize}, $y^*=\inf _{\gamma '\sim \gamma _2}\int _{\gamma '}|\varphi |^{\frac{1}{2}}$, so that ${y^*}^2/\|\varphi \|=\{\inf _{\gamma '\sim \gamma _2}\int _{\gamma '}|\varphi _1|^{\frac{1}{2}}\}^2$.
Let $H(\varphi _1)$ be the horizontal measured foliation on $\Sigma $ induced by $\varphi _1$.
It is foliated by vertical trajectories of $-\varphi _1$ with the transverse measure $|dy|$, where $z=x+iy$ is the $\varphi _1$-coordinate.
The quantity $\inf _{\gamma '\sim \gamma _2}\int _{\gamma '}|\varphi _1|^{\frac{1}{2}}$ is equal to the infimum of the lengths of all simple closed curves homotopic to $\gamma _2$ measured by $|dy|$, see Remark \ref{converges_problem}.
That is,
\begin{align*}
\inf _{\gamma '\sim \gamma _2}\int _{\gamma '}|\varphi _1|^{\frac{1}{2}}=i(H(\varphi _1),\gamma _2),
\end{align*}
where $i(\cdot ,\cdot )$ is the intersection number function.
This implies that ${y^*}^2/\|\varphi \|=i(H(\varphi _1),\gamma _2)^2$.

\begin{rem}
Any unit norm Jenkins-Strebel differential in $JS_0(\{\gamma _1\})$ is $\varphi _1$, that is,  $JS_0(\{\gamma _1\})=\{\varphi _1\}$, so that $\varphi _1$ depends only on the underlying Riemann surface $\Sigma $ and the simple closed curve $\gamma _1$.
This is also induced by Theorem \ref{Strebel_ext_prop}.
\end{rem}

Similarly, we use $\{(x^2,1)\mid x\geq 0\}$ and the same methods, so can find a suitable segment in $\mathcal A$, described by $\varphi _2\in JS_0(\{\gamma _2\})$.
In summary we obtain the segments
\begin{align*}
&L_1=\{(\ext _{\Sigma }(\gamma _1),y^2)\mid 0\leq y\leq i(H(\varphi _1),\gamma _2)\},\\
&L_2=\{(x^2,\ext _{\Sigma }(\gamma _2))\mid 0\leq x\leq i(H(\varphi _2),\gamma _1)\}
\end{align*}
which are contained in $\mathcal A$.

Consequently, the set $\mathcal A$ is a convex curve in $\mathbf R_{\geq 0}^2$ which has the segments $L_1,L_2$ orthogonal to each axis of $\mathbf R_{\geq 0}^2$ by combining Theorem \ref{surface_A} and the above argument.
We recall that any vector of moduli $(M_1,M_2)$ induced by the Jenkins-Strebel differential corresponding to a point in $\mathcal A$ is a normal vector of $\mathcal A$ by the proof of Theorem \ref{surface_A}.
This means that any Jenkins-Strebel differential corresponding to a point in $\mathcal A-(L_1\cup L_2)$ has exactly two non-degenerated characteristic ring domains.
Furthermore, by considering the equality condition of the inequality (\ref{convex_proof}) in the proof of Theorem \ref{surface_A}, then we can know that the sub curve $\mathcal A-(L_1\cup L_2)$ is strictly convex.
Figure \ref{A} represents the form of $\mathcal A$.

	\begin{figure}[!ht]
	\centering
	\includegraphics[keepaspectratio, scale=0.85]
	{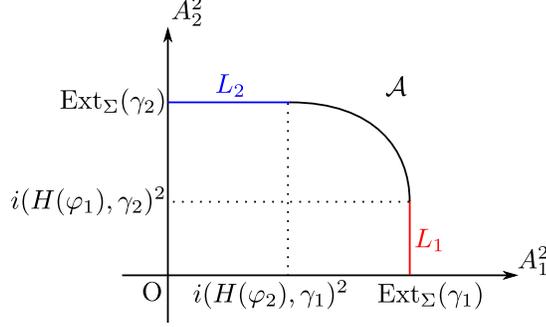}
	\caption{The surface of circumferences $\mathcal A$ for two curves $\{\gamma _1,\gamma _2\}$.}
	\label{A}
	\end{figure}

\begin{rem}\label{range_of_A}
We notice that $\mathcal A-(L_1\cup L_2)$ is not empty.
Indeed, the height problem or the moduli problem guarantees the existence of a Jenkins-Strebel differential with all non-degenerated characteristic ring domains.
Also, $\mathcal A-(L_1\cup L_2)\not =\mathcal A$ by construction.
\end{rem}

We obtain the following theorem.

\begin{thm}\label{form_of_A}
The surface of circumferences $\mathcal A$ for any admissible curve family $\{\gamma _1,\gamma _2\}$ consists of two curves forms a convex curve in $\mathbf R_{\geq 0}^2$.
It has two segments $L_1$ and $L_2$ orthogonal to each axis of $\mathbf R_{\geq 0}^2$ such that any Jenkins-Strebel differential $F(\lambda A_1,\lambda A_2)$ on $\Sigma $ for $(A_1^2,A_2^2)\in \mathcal A-(L_1\cup L_2)$ and $\lambda >0$ has exactly two non-degenerated characteristic ring domains of type $\{\gamma _1,\gamma _2\}$.
\end{thm}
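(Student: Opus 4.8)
The plan is to assemble the facts already established in this section into the stated packaging; Theorem \ref{surface_A} does most of the work. First I would invoke Theorem \ref{surface_A}, which asserts that $\mathcal A$ is a smooth convex hypersurface in $\mathbf R_{\geq 0}^k$; specializing to $k=2$ this is exactly a convex curve in $\mathbf R_{\geq 0}^2$. Thus the genuinely new content is the presence of the two axis-orthogonal segments $L_1, L_2$ and the dichotomy for the differentials attached to points of $\mathcal A$.

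Second, I would construct $L_1$ (and $L_2$ symmetrically) as in the preceding discussion. Fixing the first coordinate at $1$ and letting $y$ range over $[0, y^*]$, Lemma \ref{lower+} shows that $F(1,y)$ is one and the same differential $\varphi$ whose only non-degenerate ring domain is of type $\gamma_1$; normalizing gives a segment inside $\mathcal A$ parallel to the $y$-axis. Proposition \ref{star_range}, applied with $\#\Gamma = 2$ and first entry $\neq 0$, forces $y^* > 0$, so the segment is non-degenerate, and its endpoints $\ext_\Sigma(\gamma_1)$ and $i(H(\varphi_1), \gamma_2)^2$ are read off from the extremal-length identity $1/\|\varphi\| = 1/M_1 = \ext_\Sigma(\gamma_1)$ and from Proposition \ref{characterize} together with the length interpretation of the infimum in Remark \ref{converges_problem}.

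Third --- and this is the heart of the matter --- I would pin down where each ring domain degenerates. By the proof of Theorem \ref{surface_A}, the vector of moduli $(M_1, M_2)$ attached to the differential at a point of $\mathcal A$ is a normal vector of $\mathcal A$ there. Along $L_1$ one has $M_2 = 0$ and along $L_2$ one has $M_1 = 0$, so the relevant ring domain is degenerate exactly on those segments; conversely, at any point of $\mathcal A - (L_1 \cup L_2)$ both moduli must be strictly positive, i.e.\ both characteristic ring domains are non-degenerate. The hard part will be to verify that a single ring domain cannot degenerate off its segment, but this is precisely what Lemmas \ref{lower} and \ref{upper} deliver: degeneracy of one domain persists for all smaller values of the corresponding $A_j$ while non-degeneracy persists for all larger ones, so the degeneracy locus of each domain is exactly its segment (up to positive scaling). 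Finally, by the homogeneity of $F$ recorded in the remark following Remark \ref{0}, namely $F(\lambda A_1, \lambda A_2) = \lambda^2 F(A_1, A_2)$, passing to positive multiples $\lambda > 0$ changes neither ring domain's non-degeneracy, so the statement for unit-norm representatives extends to all $\lambda$. If desired, strict convexity of $\mathcal A - (L_1 \cup L_2)$ can be recorded as well, coming from the equality case of the inequality (\ref{convex_proof}).
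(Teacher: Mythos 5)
Your proposal is correct and follows essentially the same route as the paper: the paper's proof of Theorem \ref{form_of_A} is precisely the discussion preceding its statement, which assembles Theorem \ref{surface_A} for convexity, the construction of $L_1,L_2$ via Lemma \ref{lower+}, Propositions \ref{star_range} and \ref{characterize}, and the observation that the moduli vector is a normal vector of $\mathcal A$, exactly as you do. Your supplementary use of Lemmas \ref{lower} and \ref{upper} (together with the uniqueness of the one-curve differential) to show that degeneracy occurs exactly on the segments only makes explicit what the paper leaves implicit in its normal-vector remark, so it is a refinement rather than a different approach.
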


\begin{cor}\label{extremal_lengths}
For any admissible curve family $\{\gamma _1,\gamma _2\}$, there exists the Jenkins-Strebel differential on $\Sigma $ of type $\{\gamma _1,\gamma _2\}$ whose characteristic ring domains are all non-degenerated and have $\ext _{\Sigma }^{\frac{1}{2}}(\gamma _1),\ext _{\Sigma }^{\frac{1}{2}}(\gamma _2)$ as the circumferences.
\end{cor}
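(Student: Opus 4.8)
The plan is to set $\psi=F(\ext_\Sigma^{\frac12}(\gamma_1),\ext_\Sigma^{\frac12}(\gamma_2))$ and prove that neither of its two characteristic ring domains degenerates. Once this is established the circumference assertion is automatic: by Theorem \ref{Strebel_ext_prob} a non-degenerated ring domain of type $\gamma_j$ forces $A_j=a_j$, so the circumferences of $\psi$ are exactly $\ext_\Sigma^{\frac12}(\gamma_1)$ and $\ext_\Sigma^{\frac12}(\gamma_2)$. Geometrically, the point $(\ext_\Sigma(\gamma_1),\ext_\Sigma(\gamma_2))$ is the corner of the rectangle whose edges carry the tangent lines of $L_1$ and $L_2$, and by Theorem \ref{form_of_A} the claim amounts to showing that its radial projection onto $\mathcal A$ meets the strictly convex arc $\mathcal A-(L_1\cup L_2)$, where both ring domains are non-degenerated, rather than $L_1$ or $L_2$.

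The decisive estimate is
\begin{align*}
\ext_\Sigma(\gamma_2)>i(H(\varphi_1),\gamma_2)^2 \quad\text{and}\quad \ext_\Sigma(\gamma_1)>i(H(\varphi_2),\gamma_1)^2.
\end{align*}
For the first I would use $\rho=|\varphi_1|^{\frac12}$ as a competitor metric in the extremal length of $\gamma_2$: its area is $\|\varphi_1\|=1$ and its $\gamma_2$-length is $\inf_{\gamma'\sim\gamma_2}\int_{\gamma'}|\varphi_1|^{\frac12}=i(H(\varphi_1),\gamma_2)$, whence $\ext_\Sigma(\gamma_2)\geq i(H(\varphi_1),\gamma_2)^2$. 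Strictness is the heart of the matter: the extremal length of $\gamma_2$ is realized by the flat metric $|\varphi_2|^{\frac12}$ of the Jenkins--Strebel differential $\varphi_2$, and this extremal metric is unique up to a positive scalar (classical, and reflecting the uniqueness in Theorem \ref{Strebel_ext_prob} for the family $\{\gamma_2\}$); since $\varphi_1$ has its single characteristic ring domain of the type $\gamma_1\not\sim\gamma_2$, it is not a scalar multiple of $\varphi_2$, so $|\varphi_1|^{\frac12}$ is a non-extremal competitor and the inequality is strict. The second inequality is symmetric. I expect this strictness to be the only genuine obstacle, because the non-strict inequality is immediate from the definition of extremal length, yet without strictness the corner could coincide with the endpoint of $L_1$, which lies on the degenerate locus, and the statement would fail; the argument must rule this boundary case out.

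With these inequalities I would finish by contradiction. Since both coordinates of $(\ext_\Sigma^{\frac12}(\gamma_1),\ext_\Sigma^{\frac12}(\gamma_2))$ are positive, $\psi\neq 0$ by Remark \ref{0}, so at least one ring domain is non-degenerated. Suppose the ring domain of type $\gamma_2$ degenerates; then $\psi$ is a Jenkins--Strebel differential of type $\gamma_1$ whose characteristic ring domain is non-degenerated with circumference $a_1=A_1=\ext_\Sigma^{\frac12}(\gamma_1)$ by Theorem \ref{Strebel_ext_prob}, hence $\psi=\varphi_1$ because $JS_0(\{\gamma_1\})=\{\varphi_1\}$ and $\varphi_1$ has exactly this circumference. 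Then $a_2=\inf_{\gamma'\sim\gamma_2}\int_{\gamma'}|\varphi_1|^{\frac12}=i(H(\varphi_1),\gamma_2)$, and the inequality $A_2\leq a_2$ of Theorem \ref{Strebel_ext_prob} gives $\ext_\Sigma^{\frac12}(\gamma_2)\leq i(H(\varphi_1),\gamma_2)$, contradicting the strict inequality of the previous paragraph. Hence the ring domain of type $\gamma_2$ is non-degenerated, and the symmetric argument gives the same for $\gamma_1$; both being non-degenerated, the circumferences are $\ext_\Sigma^{\frac12}(\gamma_1)$ and $\ext_\Sigma^{\frac12}(\gamma_2)$ as required.
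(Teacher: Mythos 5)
Your overall skeleton is correct and consistent with the paper's framework: granting the two strict inequalities $\ext _{\Sigma }(\gamma _2)>i(H(\varphi _1),\gamma _2)^2$ and $\ext _{\Sigma }(\gamma _1)>i(H(\varphi _2),\gamma _1)^2$, your closing contradiction argument (via Lemma \ref{lower}, Theorem \ref{Strebel_ext_prob}, and $JS_0(\{\gamma _1\})=\{\varphi _1\}$) does prove the corollary, and you correctly identify strictness as the heart of the matter. The gap is precisely in your proof of strictness. If $|\varphi _1|^{\frac{1}{2}}$ attains the supremum defining $\ext _{\Sigma }(\gamma _2)$, uniqueness of the extremal metric (up to positive scalar) gives $|\varphi _1|=c\,|\varphi _2|$ \emph{as metrics}; unit norm forces $c=1$, and then holomorphy of $\varphi _1/\varphi _2$ yields only $\varphi _1=e^{i\theta }\varphi _2$ for some unimodular constant $e^{i\theta }$ --- not $\varphi _1=\varphi _2$, and not proportionality by a positive scalar. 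Your step ``since $\varphi _1$ has its single ring domain of type $\gamma _1\not \sim \gamma _2$, it is not a scalar multiple of $\varphi _2$'' is exactly what needs proof once complex scalars are in play, and as a general principle it is false: a unimodular rotation of a Jenkins--Strebel differential can again be a Jenkins--Strebel differential of a completely different type (on a square-tiled surface, where both the horizontal and the vertical foliations are periodic, $-\varphi $ is Jenkins--Strebel of the transverse type). So ``$|\varphi _1|^{\frac{1}{2}}$ is a non-extremal competitor'' does not follow from what you wrote, and the degenerate boundary case is not excluded.

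The gap can be closed in two ways. Staying with your method: if $\varphi _1=e^{i\theta }\varphi _2$, integrate the transverse measure of $H(\varphi _1)$ along a closed vertical trajectory $c$ of $\varphi _2$ (so $c\sim \gamma _2$ and $\varphi _2^{\frac{1}{2}}dz$ is purely imaginary along $c$); this gives $i(H(\varphi _1),\gamma _2)\leq \int _c|\im \{e^{i\theta /2}\varphi _2(z)^{\frac{1}{2}}dz\}|=|\cos (\theta /2)|\,a_2(\varphi _2)$, whereas the assumed equality forces $i(H(\varphi _1),\gamma _2)=\ext _{\Sigma }^{\frac{1}{2}}(\gamma _2)=a_2(\varphi _2)>0$; hence $\theta =0$ and $\varphi _1=\varphi _2$, which is impossible since the characteristic ring domains of a differential are determined by it and $\gamma _1\not \sim \gamma _2$. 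Alternatively --- and this is the route the paper's own machinery suggests, with no appeal to extremal-metric uniqueness --- use Remark \ref{range_of_A}: the height or moduli problem gives a unit norm $\psi $ of type $\Gamma $ with both ring domains non-degenerated. Applying the \emph{equality clause} of Theorem \ref{Strebel_ext_prop} with $\psi $ as the differential and the ring domains of $\varphi _j$ as competitors gives $a_j(\psi )^2M_j(\varphi _j)<\|\psi \|=1$, i.e.\ $a_j(\psi )^2<\ext _{\Sigma }(\gamma _j)$ for $j=1,2$; applying it with $\varphi _1$ as the differential and the ring domains of $\psi $ as competitors gives $a_1(\varphi _1)^2M_1(\psi )+a_2(\varphi _1)^2M_2(\psi )<1=a_1(\psi )^2M_1(\psi )+a_2(\psi )^2M_2(\psi )$, and since $a_1(\psi )^2<\ext _{\Sigma }(\gamma _1)=a_1(\varphi _1)^2$ and $M_2(\psi )>0$, this forces $i(H(\varphi _1),\gamma _2)^2=a_2(\varphi _1)^2<a_2(\psi )^2<\ext _{\Sigma }(\gamma _2)$, with the symmetric argument for the other inequality. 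Either repair makes your proof complete.
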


Let any $p=(S,f)\in \mathcal T$ be given.
For any holomorphic quadratic differential $\varphi $ on $S$, we denote by $H(\varphi )$ the horizontal measured foliation on $S$ induced by $\varphi $.
Let $\mathcal A(p)$ be the surface of circumferences for $f(\Gamma )$ on $S$ defined in the previous section, so the segments $L_1(p)$ and $L_2(p)$ are defined similarly, that is,
\begin{align*}
&L_1(p)=\{(\ext _S(f(\gamma _1)),y^2)\mid 0\leq y\leq i(H(\varphi _1(p)),f(\gamma _2))\},\\
&L_2(p)=\{(x^2,\ext _S(f(\gamma _2)))\mid 0\leq x\leq i(H(\varphi _2(p)),f(\gamma _1))\},
\end{align*}
where $\varphi _j(p)$ is the unique element of $JS_0(S,\{f(\gamma _j)\})$ for any $j=1,2$.
Then $\mathcal A(p)$ is a convex curve and contains $L_1(p)$ and $L_2(p)$.
The similar results for Theorem \ref{form_of_A} and Corollary \ref{extremal_lengths} are also valid in the case of any $p\in \mathcal T$, see Figure \ref{A(p)}.

We now consider how large the range of $\mathcal A(p)-(L_1(p)\cup L_2(p))$ when $p$ ranges over all points in the Teichm\"uller space $\mathcal T$.
As in Figure \ref{A(p)}, we define the following angle
\begin{align*}
\Theta (p)=\arctan \frac{\ext _S(f(\gamma _2))}{i(H(\varphi _2(p)),f(\gamma _1))^2}-\arctan \frac{i(H(\varphi _1(p)),f(\gamma _2))^2}{\ext _S(f(\gamma _1))}.
\end{align*}
By Remark \ref{range_of_A} for any $p\in \mathcal T$, $\Theta (p)\not =0,\pi /2$.
Then this angle determines a mapping $\Theta :\mathcal T\rightarrow (0,\pi /2)$.

	\begin{figure}[!ht]
	\centering
	\includegraphics[keepaspectratio, scale=0.85]
	{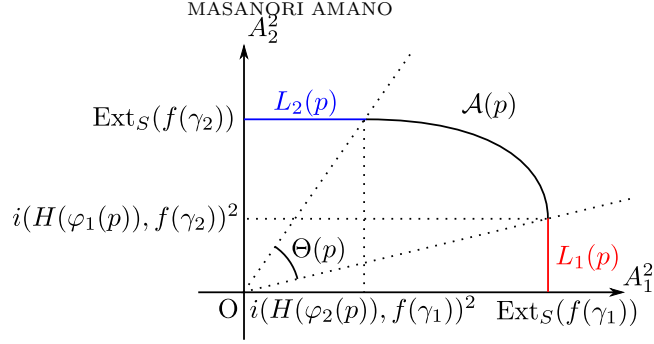}
	\caption{The surface of circumferences $\mathcal A(p)$ and the angle $\Theta (p)$.}
	\label{A(p)}
	\end{figure}

The angle $\Theta (p)$ is not bounded away from $0$.

\begin{thm}\label{sequence0}
We have $\inf _{p\in \mathcal T}\Theta (p)=0$.
\end{thm}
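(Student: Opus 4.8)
The plan is to produce a sequence $p_t\in\mathcal T$, running out to a single point of the boundary, along which $\Theta(p_t)\to 0$. Since $\Theta(p)>0$ for every $p$ by Remark \ref{range_of_A}, this already gives $\inf_{p}\Theta(p)=0$, with the infimum not attained, which is exactly the assertion. The first step is to rewrite $\Theta$ intrinsically in terms of measured foliations. Write $F_j(p)=H(\varphi_j(p))$ for the horizontal foliation of the one‑cylinder differential $\varphi_j(p)\in JS_0(S,\{f(\gamma_j)\})$. The computation preceding Theorem \ref{form_of_A} gives $\ext_S(f(\gamma_j))=i(F_j(p),f(\gamma_j))^2$, while the upper endpoint of $L_1(p)$ is $i(F_1(p),f(\gamma_2))$ and that of $L_2(p)$ is $i(F_2(p),f(\gamma_1))$. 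Substituting these into the definition of $\Theta(p)$ converts it into
\[
\Theta(p)=\arctan\big(r_2(p)^2\big)-\arctan\big(r_1(p)^2\big),\qquad r_j(p)=\frac{i(F_j(p),f(\gamma_2))}{i(F_j(p),f(\gamma_1))},
\]
so each $r_j$ depends only on the projective class of $F_j$. Hence it suffices to arrange that $F_1(p)$ and $F_2(p)$ become projectively close: if both converge to one common projective measured foliation, then $r_1(p)$ and $r_2(p)$ share a limit and $\Theta(p)\to0$.

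For the construction I would fix a holomorphic quadratic differential $\psi$ on $\Sigma$ whose vertical foliation $V$ is uniquely ergodic (a generic direction has this property), and let $p_t\in\mathcal T$ be the Teichm\"uller ray determined by $\psi$, parametrized so that the flat metric is stretched in the horizontal direction; then $p_t$ converges in Thurston's compactification to $[V]$. The claim to establish is that, for $j=1,2$, the foliation $F_j(p_t)$ converges projectively to $[V]$. Granting this, $r_j(p_t)\to i(V,f(\gamma_2))/i(V,f(\gamma_1))$, a quantity that is finite and strictly positive because a uniquely ergodic foliation fills the surface and meets every essential curve; the two limits coincide, and therefore $\Theta(p_t)\to0$.

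To prove $F_j(p_t)\to[V]$ I would combine three standard external inputs. First, the Minsky/Gardiner–Masur inequality $i(H(q),\mu)^2\le\|q\|\,\ext(\mu)$ applied to the unit‑norm differential $q=\varphi_j(p_t)$ gives $i(F_j(p_t),\mu)\le \ext_{p_t}(\mu)^{1/2}$ for every measured foliation $\mu$. Second, Kerckhoff's formula for the growth of extremal length along the ray gives $\ext_{p_t}(\mu)=e^{2t}i(\mu,V)^2\|\psi\|^{-1}(1+o(1))$ for each fixed $\mu$. Normalizing $\widehat F_j(p_t)=F_j(p_t)/i(F_j(p_t),f(\gamma_j))$ and using $i(F_j,f(\gamma_j))=\ext_{p_t}(f(\gamma_j))^{1/2}$, the fixed constant $\|\psi\|$ cancels and these two estimates yield $\limsup_t i(\widehat F_j(p_t),\mu)\le i(\mu,V)/i(V,f(\gamma_j))$ for all $\mu$. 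Third, by Thurston compactness any subsequential limit $F_\infty$ of $\widehat F_j(p_t)$ then satisfies $i(F_\infty,V)=0$ (take $\mu=V$), and unique ergodicity of $V$ forces $F_\infty\propto V$; matching the normalization on $f(\gamma_j)$ pins down the limit, so $\widehat F_j(p_t)\to V/i(V,f(\gamma_j))$ and in particular $r_j(p_t)\to i(V,f(\gamma_2))/i(V,f(\gamma_1))$, completing the argument.

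The step I expect to be the main obstacle is precisely this projective convergence of the horizontal foliations of the one‑cylinder (Hubbard–Masur) differentials along the degenerating family: one must control $H(\varphi_j(p_t))$ while both $\ext_{p_t}(f(\gamma_1))$ and $\ext_{p_t}(f(\gamma_2))$ blow up, and the identification of the common limit depends essentially on choosing $V$ uniquely ergodic. It is worth noting that the obvious attempt—pinching a single curve $\gamma_1$—fails: there one can check that $\arctan\!\big(r_1^2\big)$ stays bounded away from $\pi/2$ while $\arctan\!\big(r_2^2\big)\to\pi/2$ (using $i(F_2,f(\gamma_1))^2\le\ext_{p_t}(f(\gamma_1))\to0$), so $\Theta$ tends to a strictly positive limit. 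It is the simultaneous alignment of both foliations $F_1,F_2$ to a single uniquely ergodic direction, rather than the collapse of one curve, that forces the angle $\Theta$ to zero.
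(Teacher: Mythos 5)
Your proposal is correct, but it takes a genuinely different route from the paper, at a much higher technological cost. The paper never changes the underlying Riemann surface: it sets $p_n=(\Sigma ,\tau ^n)$, where $\tau $ is a Dehn twist about a curve $\delta $ with $i(\delta ,\gamma _1)>0$ and $i(\delta ,\gamma _2)=0$, so that $\varphi _2(p_n)$ is literally constant in $n$, and shows that \emph{both} arctangents in $\Theta (p_n)$ tend to $0$ separately: $i(H(\varphi _1(p_n)),\gamma _2)^2\leq \ext _{\Sigma }(\gamma _2)$ stays bounded (Minsky's inequality) while $\ext _{\Sigma }(\tau ^n(\gamma _1))\rightarrow \infty $, and $i(H(\varphi _2),\tau ^n(\gamma _1))\rightarrow \infty $ while $\ext _{\Sigma }(\gamma _2)$ is fixed (here the paper implicitly uses $i(H(\varphi _2),\delta )>0$, which holds because the horizontal and vertical foliations of $\varphi _2$ jointly fill the surface and $i(\delta ,\gamma _2)=0$). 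In your normal form $\Theta =\arctan (r_2^2)-\arctan (r_1^2)$ --- a clean and correct observation, which incidentally also explains the paper's proof --- the paper's mechanism is the degenerate one $r_1,r_2\rightarrow 0$, whereas yours forces $r_1,r_2$ to a common \emph{positive} limit by aligning both $F_1,F_2$ with a uniquely ergodic $[V]$. Your route needs three external inputs the paper avoids: existence of uniquely ergodic foliations (Masur), the rigidity statement $i(F_\infty ,V)=0\Rightarrow F_\infty \propto V$, and extremal-length asymptotics along the Teichm\"uller ray. The last is your only fragile point: the sharp pointwise form $\ext _{p_t}(\mu )=e^{2t}i(\mu ,V)^2\|\psi \|^{-1}(1+o(1))$ attributed to Kerckhoff is not an off-the-shelf statement; the elementary part is only the lower bound $\ext _{p_t}(\mu )\geq e^{2t}i(\mu ,V)^2/\|\psi \|$, and the matching upper bound is exactly where unique ergodicity must enter (for instance via Miyachi's theorem that such a ray converges in the Gardiner--Masur boundary to $[i(\cdot ,V)]$, i.e.\ convergence of extremal-length \emph{ratios}). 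Since, as you yourself note, your normalization only ever uses ratios, the ratio form suffices and your argument closes once that citation is stated precisely. What your approach buys is conceptual: the infimum is approached along a single ray converging to one boundary point, with nondegenerate limiting data; what the paper's approach buys is a short, self-contained proof using only tools it has already cited.
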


\begin{proof}
We recall that $\Gamma =\{\gamma _1,\gamma _2\}$ is an admissible curve family on $\Sigma $.
Let $\delta $ be a non-zero and non-peripheral simple closed curve on $\Sigma $ such that $i(\delta ,\gamma _1)>0$ and $\delta $ does not intersect with $\gamma _2$.
Let $\tau $ be a Dehn twist about $\delta $.
We notice that $\tau (\gamma _2)=\gamma _2$ as the homotopy class.

For any $n\in \mathbf N$, let $p_n=(\Sigma ,\tau ^n)\in \mathcal T$, then $\varphi _1(p_n)\in JS_0(\Sigma ,\{\tau ^n(\gamma _1)\}),\ \varphi _2(p_n)\in JS_0(\Sigma ,\{\gamma _2\})$, so that
\begin{align*}
&\frac{i(H(\varphi _1(p_n)),\tau ^n(\gamma _2))^2}{\ext _{\Sigma }(\tau ^n(\gamma _1))}
=\frac{i(H(\varphi _1(p_n)),\gamma _2)^2}{\ext _{\Sigma }(\tau ^n(\gamma _1))}
\leq \frac{\ext _{\Sigma }(\gamma _2)}{\ext _{\Sigma }(\tau ^n(\gamma _1))}\rightarrow 0,\\
&\frac{\ext _{\Sigma }(\tau ^n(\gamma _2))}{i(H(\varphi _2(p_n)),\tau ^n(\gamma _1))^2}
=\frac{\ext _{\Sigma }(\gamma _2)}{i(H(\varphi _2(p_n)),\tau ^n(\gamma _1))^2}\rightarrow 0
\end{align*}
as $n\rightarrow \infty $, since the Riemann surface $\Sigma $, the curve $\gamma _2$, and so $\varphi _2(p_n)$ do not depend on $n$, and the inequality $i(H(\varphi _1(p_n)),\gamma _2)^2\leq \ext _{\Sigma }(\gamma _2)$ comes from the form of $\mathcal A(p_n)$, see Figure \ref{A(p)}, or is implied by Minsky's inequality, see \cite{Minsky96}.
Therefore, $\Theta (p_n)\rightarrow 0$ as $n\rightarrow \infty $.
\end{proof}

Conversely, we do not know that $\sup _{p\in \mathcal T}\Theta (p)=\pi /2$ is true or false.

The mapping $\Theta $ takes any angle in the interval $(0,\sup _{p\in \mathcal T}\Theta (p))$.

\begin{prop}\label{dense}
We have $\Theta (\mathcal T)\supset (0,\sup _{p\in \mathcal T}\Theta (p))$.
\end{prop}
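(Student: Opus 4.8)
The plan is to deduce the statement from the intermediate value theorem, once the continuity of $\Theta $ is established. Recall from the excerpt that $\mathcal T$ is a (unique) geodesic space, so in particular it is path-connected. If $\Theta :\mathcal T\rightarrow (0,\pi /2)$ is continuous, then $\Theta (\mathcal T)$ is a connected subset of $\mathbf R$, hence an interval $I$. By Theorem \ref{sequence0} we have $\inf I=\inf _{p\in \mathcal T}\Theta (p)=0$, while by definition $\sup I=\sup _{p\in \mathcal T}\Theta (p)$. An interval whose infimum is $0$ and whose supremum is $\sup _p\Theta (p)$ necessarily contains the open interval $(0,\sup _p\Theta (p))$: given $t$ in this interval, since $\inf _p\Theta (p)=0<t$ there is $p_1$ with $\Theta (p_1)<t$, and since $\sup _p\Theta (p)>t$ there is $p_2$ with $\Theta (p_2)>t$; choosing a path in $\mathcal T$ from $p_1$ to $p_2$ and applying the intermediate value theorem to $\Theta $ along it yields a point $p$ with $\Theta (p)=t$. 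This gives $\Theta (\mathcal T)\supset (0,\sup _p\Theta (p))$, and note that the argument says nothing about whether $\sup _p\Theta (p)$ itself is attained, consistently with the statement.

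Thus the whole proposition reduces to the continuity of $\Theta $, which is the main obstacle. From its definition, $\Theta (p)$ is assembled by $\arctan $ from the four quantities $\ext _S(f(\gamma _1))$, $\ext _S(f(\gamma _2))$, $i(H(\varphi _1(p)),f(\gamma _2))^2$, and $i(H(\varphi _2(p)),f(\gamma _1))^2$, where $p=(S,f)$. Since $\arctan $ is continuous and both denominators $i(H(\varphi _2(p)),f(\gamma _1))^2$ and $\ext _S(f(\gamma _1))$ are strictly positive (so that $\Theta (p)\in (0,\pi /2)$ is well defined, as already noted through Remark \ref{range_of_A}), it suffices to prove that each of these four quantities depends continuously on $p\in \mathcal T$. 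The continuity of the extremal length $\ext _S(f(\gamma _j))$ of a fixed homotopy class, viewed as a function on $\mathcal T$, is standard.

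For the two intersection numbers I would show that the assignment $p\mapsto \varphi _j(p)$ is continuous and then invoke the continuity of the geometric intersection number on the space of measured foliations. The differential $\varphi _j(p)$ is the unique unit norm element of $JS_0(S,\{f(\gamma _j)\})$, whose vertical foliation is a positive multiple of the weighted simple closed curve $f(\gamma _j)$; by the Hubbard--Masur correspondence \cite{HubMas76} between measured foliations and holomorphic quadratic differentials, which varies continuously with the complex structure, the horizontal foliation $H(\varphi _j(p))$ varies continuously in measured foliation space as $p$ ranges over $\mathcal T$, so that $i(H(\varphi _j(p)),f(\gamma _i))$ is continuous in $p$; combined with the previous paragraph this proves that $\Theta $ is continuous. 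I expect this last point — the continuous dependence of the single-cylinder Jenkins--Strebel differential $\varphi _j(p)$ on the point of Teichm\"uller space — to be the genuinely delicate step, since it compares differentials living on different Riemann surfaces, and it is the Hubbard--Masur theorem together with the uniqueness in Theorem \ref{Strebel_ext_prob} that makes it go through.
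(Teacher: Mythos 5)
Your proposal is correct and follows essentially the same route as the paper: the paper likewise reduces the proposition to the continuity of $\Theta $ (Lemma \ref{continuous}), proves that continuity via the Hubbard--Masur homeomorphisms (Theorems \ref{HandM1} and \ref{HandM2}) together with the continuity of extremal length and of the intersection number on $\mathcal{MF}$, and then concludes by the intermediate value theorem applied along a Teichm\"uller geodesic joining points $p,q$ with $\Theta (p)<x<\Theta (q)$. The only cosmetic differences are that the paper obtains the continuity of $p\mapsto \varphi _i(p)$ from the heights parametrization $(p,1/\ext _S^{\frac{1}{2}}(f(\gamma _i)))\mapsto \varphi _i(p)$ of Theorem \ref{HandM2} --- your weighted-core-curve formulation is the same statement, since that height is exactly the weight of the vertical foliation, and its continuity in $p$ again comes from extremal length --- and that the relevant citation is \cite{HubMas79} rather than \cite{HubMas76}.
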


It comes from the following lemma.

\begin{lemma}\label{continuous}
The mapping $\Theta :\mathcal T\rightarrow (0,\pi /2)$ is continuous.
\end{lemma}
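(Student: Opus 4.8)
The plan is to show that $\Theta$ is continuous by establishing the continuity of each of the four building blocks appearing in its definition, and then invoking the fact that $\arctan$ and the arithmetic operations are continuous on the relevant ranges. The angle $\Theta(p)$ is assembled from two extremal lengths, $\ext_S(f(\gamma_1))$ and $\ext_S(f(\gamma_2))$, and two intersection numbers, $i(H(\varphi_1(p)), f(\gamma_2))$ and $i(H(\varphi_2(p)), f(\gamma_1))$. The denominators in the two $\arctan$ arguments are $i(H(\varphi_2(p)),f(\gamma_1))^2$ and $\ext_S(f(\gamma_1))$, both of which are strictly positive on all of $\mathcal T$ (the extremal length of a non-trivial curve is positive, and by Remark \ref{range_of_A} together with Proposition \ref{star_range} the relevant intersection number cannot vanish). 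Hence no division by zero occurs, and once each ingredient is shown to depend continuously on $p$, the composite $\Theta$ is continuous.

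First I would handle the extremal lengths. The function $p \mapsto \ext_S(f(\gamma_j))$ is a classically known continuous (indeed, real-analytic) function on the Teichm\"uller space $\mathcal T$ for each fixed homotopy class; this follows from the continuous dependence of the Jenkins-Strebel differential $\varphi_j(p) \in JS_0(S,\{f(\gamma_j)\})$ on $p$, via the identification $\ext_S(f(\gamma_j)) = 1/M_j = a_j^2$ established in \S\ref{dim2}. Concretely, I would pick a convergent sequence $p_n \to p$ in $(\mathcal T, d_{\mathcal T})$, realize the $p_n$ by a sequence of quasiconformal maps whose dilatations tend to that of $p$, and use the compactness Lemma \ref{compact} together with the convergence Lemma \ref{converges} to extract a locally uniformly convergent subsequence of the normalized differentials $\varphi_j(p_n)$; the limit must be $\varphi_j(p)$ by the uniqueness in the single-curve case (that is, $JS_0(S,\{f(\gamma_j)\})$ is a singleton). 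Lemma \ref{converges} then yields convergence of the moduli, hence of the extremal lengths.

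The main obstacle will be the continuity of the two intersection-number terms $i(H(\varphi_j(p)), f(\gamma_{3-j}))$, since these couple the variation of the differential $\varphi_j(p)$ with the variation of the measured foliation $H(\varphi_j(p))$. My plan is to argue that the map $p \mapsto H(\varphi_j(p))$ is continuous into the space $\mathcal{MF}$ of measured foliations with its weak topology, and then to use the continuity of the geometric intersection pairing $i(\cdot,\cdot) : \mathcal{MF} \times \mathcal{MF} \to \mathbf{R}_{\geq 0}$. Continuity of $p \mapsto H(\varphi_j(p))$ again reduces, through the same compactness-and-uniqueness argument above, to the locally uniform convergence of the differentials; the key point is that locally uniform convergence of holomorphic quadratic differentials induces convergence of the associated horizontal foliations in $\mathcal{MF}$, which is exactly the content flagged in Remark \ref{converges_problem} (via \cite[Theorem 24.7]{Strebel84}, the convergence of $\inf_{\gamma' \sim \gamma} \int_{\gamma'} |\im\{\varphi_n^{1/2}dz\}|$ for every closed curve $\gamma$). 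Since the intersection number with the fixed curve $f(\gamma_{3-j})$ is precisely such an infimum (as recorded in \S\ref{dim2}, $i(H(\varphi_1(p)),f(\gamma_2)) = \inf_{\gamma' \sim f(\gamma_2)} \int_{\gamma'} |\varphi_1(p)|^{1/2}$), this convergence is already essentially packaged into Lemma \ref{converges} and Remark \ref{converges_problem}. Assembling these continuous ingredients through the continuous functions $x \mapsto x^2$, division (with nonvanishing denominators), $\arctan$, and subtraction gives the continuity of $\Theta$.
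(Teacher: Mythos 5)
Your overall decomposition (two extremal lengths plus two intersection numbers, then continuity of $\arctan$, squaring, and division by nonvanishing denominators) matches the paper's first reduction, but the core technical step is carried out in a way that does not work. You propose to take $p_n\rightarrow p$ in $\mathcal T$ and apply Lemma \ref{compact}, Lemma \ref{converges}, and Remark \ref{converges_problem} to the sequence $\varphi _j(p_n)$. Those three results concern sequences of holomorphic quadratic differentials on a \emph{single fixed} Riemann surface with the locally uniform convergence topology; here, however, $\varphi _j(p_n)\in JS_0(S_n,\{f_n(\gamma _j)\})$ lives on the surface $S_n$, which changes with $n$. Locally uniform convergence $\varphi _j(p_n)\rightarrow \varphi _j(p)$ is not even defined until the surfaces are identified, and the natural identification --- quasiconformal maps $h_n:S\rightarrow S_n$ with dilatation tending to $1$ --- does not rescue the argument, because the pullback of a holomorphic quadratic differential under a quasiconformal map is no longer holomorphic, so the limit differential and the uniqueness argument you invoke have no meaning on a common surface. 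The same defect undermines both your extremal-length step and your intersection-number step.

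This gap is precisely what the paper's proof is designed to avoid: it never compares differentials over different base points by locally uniform convergence. Instead it invokes the two Hubbard--Masur homeomorphisms, Theorem \ref{HandM1} and Theorem \ref{HandM2}, which supply the correct topology on the bundle $Q$ over $\mathcal T$ and assert that $\varphi \mapsto (p,f^*(H(\varphi )))$ and $\varphi \mapsto (p,(b_1,\ldots ,b_k))$ are homeomorphisms. The paper then writes $p\mapsto \bigl(p,1/\ext _S^{\frac{1}{2}}(f(\gamma _i))\bigr)\mapsto \varphi _i(p)\mapsto f^*(H(\varphi _i(p)))$ as a composition of continuous maps --- note that the classical continuity of extremal length on $\mathcal T$ is used here as an \emph{input} to feed the height coordinate into Theorem \ref{HandM2}, not derived from convergence of differentials as in your plan --- and, crucially, the foliations are pulled back to the fixed surface $\Sigma $ so that the pairing $i(f^*(H(\varphi _i(p))),\gamma _j)$ takes place in $\mathcal {MF}$ on $\Sigma $, where continuity of the intersection form applies. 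To repair your proof you would either need to import these Hubbard--Masur results (reproducing the paper's argument) or develop a genuine notion of convergence of differentials over varying surfaces, which is substantially more work than the steps you sketched.
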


To prove this lemma, we use the two homeomorphisms from the work of Hubbard and Masur \cite{HubMas79}.
Let $Q$ be a vector bundle over $\mathcal T$ of all holomorphic quadratic differentials.
The fiber above $p=(S,f)\in \mathcal T$ is the set of all holomorphic quadratic differentials on $S$.
Let $\mathcal {MF}$ be the set of all (Whitehead equivalence classes of) measured foliations on $\Sigma $.

\begin{thm}[\cite{HubMas79}]\label{HandM1}
The mapping $Q\rightarrow \mathcal T\times \mathcal {MF}$ which sends any $\varphi \in Q$ to the pair of its corresponding point $p=(S,f)\in \mathcal T$ and the pulled back horizontal measured foliation $f^*(H(\varphi ))$ on $\Sigma $ determined by $\varphi $, is a homeomorphism.
\end{thm}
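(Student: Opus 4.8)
The plan is to reduce the theorem to a fiberwise bijection together with a global continuity statement, and then to conclude by invariance of domain. Write $\Phi\colon Q\rightarrow \mathcal T\times \mathcal {MF}$ for the map in question, so that $\Phi(\varphi)=(p,f^*(H(\varphi)))$ whenever $\varphi$ lies in the fiber over $p=(S,f)$. By construction $\Phi$ covers the identity on $\mathcal T$, hence it is a bijection if and only if its restriction $\Phi_p$ to each fiber $Q_S$ is a bijection onto $\mathcal {MF}$, and in that case $\Phi$ is automatically injective across distinct fibers. Both $Q$ and $\mathcal T\times \mathcal {MF}$ are manifolds of the same real dimension $2(6g-6+2n)$, so once $\Phi$ is known to be a continuous bijection, invariance of domain will force it to be open and therefore a homeomorphism.

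First I would prove that $\Phi_p$ is surjective. The measured foliations carried by weighted systems of disjoint simple closed curves are dense in $\mathcal {MF}$, and for such a foliation---whose weights I read as prescribed heights on an admissible curve family---the height problem of Jenkins and Strebel (see \cite{Strebel84}) produces a Jenkins-Strebel differential on $S$ whose characteristic ring domains realize exactly those curves with exactly those heights, so that its horizontal foliation is the given one. To reach an arbitrary $F\in \mathcal {MF}$ I would approximate $F$ by such rational foliations, normalize the associated differentials, extract a locally uniform limit using the compactness behind Lemma \ref{compact}, and identify the horizontal foliation of the limit with $F$ by the continuity of the height functionals recorded in Lemma \ref{converges} and Remark \ref{converges_problem}.

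The injectivity of $\Phi_p$ is the crux. For every simple closed curve $\gamma$ one has
\begin{align*}
i(H(\varphi),\gamma)=\inf_{\gamma'\sim\gamma}\int_{\gamma'}|\im\{\varphi(z)^{\frac{1}{2}}dz\}|,
\end{align*}
as in Remark \ref{converges_problem}, so the horizontal foliation of $\varphi$ is exactly the record of these heights over all $\gamma$. On the rational locus, equality of horizontal foliations is precisely the uniqueness clause of the height problem, so two differentials there must agree; I would then promote this to all of $Q_S$ by density together with the continuity of $\varphi\mapsto H(\varphi)$. The genuine content here is that a holomorphic quadratic differential is determined by its horizontal measured foliation; this is the technical heart of \cite{HubMas79} and is where I expect the main difficulty to lie. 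With existence and uniqueness in hand, $\Phi_p$ is a continuous bijection between two copies of $\mathbf R^{6g-6+2n}$, hence a homeomorphism by invariance of domain.

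It then remains to globalize. The map $\Phi$ is continuous: with the base point fixed, $\varphi\mapsto H(\varphi)$ is continuous for the locally uniform topology by the height continuity of Lemma \ref{converges} and Remark \ref{converges_problem}, while continuity as the base point varies follows from the local triviality of the bundle $Q$ together with the same height estimates. Being fiberwise bijective and fiber-preserving, $\Phi$ is a global bijection; invariance of domain between the equidimensional manifolds $Q$ and $\mathcal T\times \mathcal {MF}$ then shows that this continuous bijection is open, and an open continuous bijection has continuous inverse. Hence $\Phi$ is a homeomorphism. Throughout, the main obstacle is the fiberwise uniqueness---that the horizontal foliation determines the differential---while the compactness of Lemma \ref{compact} is the essential input that legitimizes the limiting step in the surjectivity argument.
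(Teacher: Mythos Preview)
The paper does not prove this theorem at all: it is quoted from \cite{HubMas79} and used as a black box in the proof of Lemma \ref{continuous}. There is therefore no argument in the paper to compare your proposal against.

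That said, your sketch has a genuine gap at the point you yourself flag as the crux. You write that injectivity on the rational (Jenkins--Strebel) locus follows from the uniqueness clause of the height problem, and then propose to ``promote this to all of $Q_S$ by density together with the continuity of $\varphi\mapsto H(\varphi)$.'' Density plus continuity does not propagate injectivity: a continuous map that is injective on a dense set need not be injective on the closure. Concretely, if $H(\varphi_1)=H(\varphi_2)$ for two non-Jenkins--Strebel differentials, there is no mechanism here that produces rational approximants on which the uniqueness statement bites. The actual Hubbard--Masur argument (and the later variational proofs of Gardiner and Wolf) establishes uniqueness directly for arbitrary $\varphi$ via an $L^1$ minimal-norm/second-variation inequality, not by passing through the rational case.

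Your surjectivity step also needs more care. Approximating $F$ by weighted multicurves $F_n$ and solving the height problem gives $\varphi_n$ with $H(\varphi_n)=F_n$, but Lemma \ref{compact} concerns $JS_0(\Gamma)$ for a \emph{fixed} $\Gamma$, whereas your $\varphi_n$ live over varying curve systems. You can instead use compactness of the unit sphere in the finite-dimensional space $Q_S$, but then after normalizing you must control $\|\varphi_n\|$ to recover $F$ (rather than a rescaling of it) as the horizontal foliation of the limit; this amounts to properness of $\Phi_p$, which is itself part of the content of the theorem.

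Finally, your appeal to invariance of domain presupposes that $\mathcal{MF}$ is a topological manifold of the correct dimension. This is true (via Dehn--Thurston or train-track coordinates) and is independent of Hubbard--Masur, so there is no circularity, but it is worth stating explicitly since one common way of \emph{seeing} that $\mathcal{MF}\cong\mathbf R^{6g-6+2n}$ is precisely the theorem you are trying to prove.
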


\begin{thm}[\cite{HubMas79}]\label{HandM2}
Let $\Gamma =\{\gamma _1,\ldots ,\gamma _k\}$ be an admissible curve family on $\Sigma $.
Let $JS^{ess}(\Gamma )\subset Q$ be the space of all Jenkins-Strebel differentials on $S$ with all non-degenerated characteristic ring domains of type $f(\Gamma )$ which are above any $p=(S,f)\in \mathcal T$.
The mapping $JS^{ess}(\Gamma )\rightarrow \mathcal T\times \mathbf R_{>0}^k$ which sends any $\varphi \in JS^{ess}(\Gamma )$ to the pair of its corresponding point $p$ and the heights $(b_1,\ldots ,b_k)$ of the characteristic ring domains determined by $\varphi $, is a homeomorphism.
\end{thm}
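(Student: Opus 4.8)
The plan is to deduce the statement from the Hubbard--Masur homeomorphism of Theorem \ref{HandM1} together with the height problem, by realizing the height map as the restriction of that homeomorphism to the locus of Jenkins--Strebel differentials supported on $\Gamma $. Because the characteristic ring domains of a Jenkins--Strebel differential are swept out by its closed \emph{vertical} trajectories (in the convention of this paper), the measured foliation that records them is the horizontal foliation of $-\varphi $, namely $H(-\varphi )$; its leaves are the closed core-parallel curves and its transverse measure is $|dx|$ in a $\varphi $-coordinate $z=x+iy$. For $\varphi \in JS^{ess}(\Gamma )$ above $p=(S,f)$, whose ring domains are all non-degenerated and of type $f(\Gamma )$, one checks that $H(-\varphi )=\sum _{j=1}^k b_j\, f(\gamma _j)$, where $b_j>0$ is the height (horizontal length) of the $j$-th ring domain; hence $f^*(H(-\varphi ))=\sum _{j=1}^k b_j\,\gamma _j$. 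I would therefore set $\Phi (\varphi )=(p,\,f^*(H(-\varphi )))$ and note that $\Phi \colon Q\to \mathcal T\times \mathcal{MF}$ is a homeomorphism: it is the composition of the fiberwise-linear homeomorphism $\varphi \mapsto -\varphi $ of $Q$ with the map of Theorem \ref{HandM1}.

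Next I would identify the image of $JS^{ess}(\Gamma )$ under $\Phi $ with $\mathcal T\times C(\Gamma )$, where $C(\Gamma )=\{\sum _{j=1}^k b_j\gamma _j\in \mathcal{MF}\mid b_j>0\}$ is the open cone of positively weighted multicurves supported on $\Gamma $. The inclusion $\Phi (JS^{ess}(\Gamma ))\subseteq \mathcal T\times C(\Gamma )$ is the computation above. For the reverse inclusion, fix $(p,\sum b_j\gamma _j)$ with all $b_j>0$; the height problem (the existence-and-uniqueness result recalled in the introduction) produces a unique Jenkins--Strebel differential $\varphi $ on $S$ of type $f(\Gamma )$ whose $j$-th characteristic ring domain has height $b_j$. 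Since all heights are positive, all ring domains are non-degenerated, so $\varphi \in JS^{ess}(\Gamma )$, and by construction $\Phi (\varphi )=(p,\sum b_j\gamma _j)$. As $\Phi $ is a bijection, this shows both $\Phi (JS^{ess}(\Gamma ))=\mathcal T\times C(\Gamma )$ and $JS^{ess}(\Gamma )=\Phi ^{-1}(\mathcal T\times C(\Gamma ))$.

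It then remains to turn this into the asserted homeomorphism onto $\mathcal T\times \mathbf R_{>0}^k$. Restricting the homeomorphism $\Phi $ to the subset $JS^{ess}(\Gamma )=\Phi ^{-1}(\mathcal T\times C(\Gamma ))$, with its subspace topology inherited from $Q$, yields a homeomorphism $JS^{ess}(\Gamma )\to \mathcal T\times C(\Gamma )$. Finally I would compose with the coordinate map $\iota \colon C(\Gamma )\to \mathbf R_{>0}^k$, $\sum b_j\gamma _j\mapsto (b_1,\ldots ,b_k)$. The map $\iota $ is a homeomorphism: it is continuous and bijective, and its inverse is continuous because each weight is recovered as $b_j=i(\cdot ,\delta _j)/i(\gamma _j,\delta _j)$ for a curve $\delta _j$ meeting $\gamma _j$ but disjoint from the other $\gamma _l$, with the intersection-number functionals continuous on $\mathcal{MF}$; equivalently, that weighted-multicurve coordinates on the cone over a fixed support are a homeomorphism is the standard linear structure of $\mathcal{MF}$ established in \cite{FatLauPoe79}. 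Tracing through the definitions, $(\mathrm{id}\times \iota )\circ \Phi |_{JS^{ess}(\Gamma )}$ is exactly the map $\varphi \mapsto (p,(b_1,\ldots ,b_k))$ of the statement, which is therefore a homeomorphism.

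The main obstacle I expect is bookkeeping rather than deep analysis: getting the horizontal/vertical normalization right so that the foliation recorded by $\Phi $ is the weighted multicurve whose weights are the \emph{heights} and not the circumferences, and confirming that the subspace topology on $JS^{ess}(\Gamma )$ induced from the bundle $Q$ agrees with the one for which $\Phi $ restricts to a homeomorphism. The example of $\varphi _1$ in \S \ref{dim2}, where $i(H(\varphi _1),\gamma _2)\neq 0$, is a useful consistency check that it is $H(-\varphi )$, not $H(\varphi )$, that collapses onto the core curves. Care is also needed to guarantee the existence of the dual curves $\delta _j$, or, to sidestep this entirely, simply to invoke the cone structure of $\mathcal{MF}$ from \cite{FatLauPoe79}.
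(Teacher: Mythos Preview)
The paper does not supply a proof of this theorem: it is stated as a quotation from \cite{HubMas79}, to be used as a black box in the proof of Lemma~\ref{continuous}. There is therefore no ``paper's own proof'' to compare against.

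Your derivation is nonetheless sound and is the natural way to extract Theorem~\ref{HandM2} from Theorem~\ref{HandM1} together with the height problem. The key identification $f^*(H(-\varphi))=\sum_{j=1}^k b_j\gamma_j$ for $\varphi\in JS^{ess}(\Gamma)$ is correct in the paper's conventions (closed \emph{vertical} trajectories sweep the ring domains, and the transverse measure of $H(-\varphi)$ is $|dx|$ in a $\varphi$-coordinate, giving total measure $b_j$ across the $j$-th cylinder). The rest is formal: $\varphi\mapsto -\varphi$ is a fiberwise homeomorphism of $Q$, so $\Phi$ inherits the homeomorphism property from Theorem~\ref{HandM1}; the height problem furnishes surjectivity onto $\mathcal T\times C(\Gamma)$; and restricting a homeomorphism to a full preimage yields a homeomorphism onto that subset. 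The final identification $C(\Gamma)\cong\mathbf R_{>0}^k$ is standard, and your suggested route via intersection numbers with dual curves $\delta_j$ works (such $\delta_j$ always exist since $3g-3+n\geq 2$ here), though invoking the cone structure from \cite{FatLauPoe79} is cleaner.

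One minor caution: Theorem~\ref{HandM1} as usually stated concerns nonzero differentials and nonempty foliations, so strictly speaking $\Phi$ is a homeomorphism $Q\setminus\{0\}\to\mathcal T\times(\mathcal{MF}\setminus\{0\})$; since $JS^{ess}(\Gamma)$ and $C(\Gamma)$ avoid the zero locus this causes no trouble, but it is worth saying explicitly.
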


\begin{proof}[Proof of Lemma \ref{continuous}]
It is well-known that each extremal length function is continuous on $\mathcal T$.
Then this is enough to prove that the mapping $p\mapsto i(H(\varphi _i(p)),f(\gamma _j))$ is continuous for $(i,j)=(1,2)$ or $(2,1)$ for any $p=(S,f)\in \mathcal T$.

We apply the above two theorems to $\varphi _i(p)$.
By Theorem \ref{HandM1}, the mapping $\varphi _i(p)\mapsto (p,f^*(H(\varphi _i(p))))$ is continuous.
Next, since the height of the only one characteristic ring domain of $\varphi _i(p)$ is $1/\ext _S^{\frac{1}{2}}(f(\gamma _i))$, the mapping $(p,1/\ext _S^{\frac{1}{2}}(f(\gamma _i)))\mapsto \varphi _i(p)$ is continuous by Theorem \ref{HandM2}.
Therefore, the composition
\begin{align*}
p=(S,f)\mapsto \left(p,\frac{1}{\ext _S^{\frac{1}{2}}(f(\gamma _i))}\right)\mapsto \varphi _i(p) \mapsto (p,f^*(H(\varphi _i(p))))\mapsto f^*(H(\varphi _i(p)))
\end{align*}
is continuous.
The intersection number function $i:\mathcal {MF}\times \mathcal {MF}\rightarrow \mathbf R_{\geq 0}$ is also continuous, so we have that the mapping
\begin{align*}
p\mapsto i(f^*(H(\varphi _i(p))),\gamma _j)=i(H(\varphi _i(p)),f(\gamma _j))
\end{align*}
is also.
\end{proof}

\begin{proof}[Proof of Proposition \ref{dense}]
For any $x\in (0,\sup _{p\in \mathcal T}\Theta (p))$, there are two points $p,q$ in $\mathcal T$ such that $\Theta (p)<x<\Theta (q)$ by Theorem \ref{sequence0}.
Let $r:[0,t]\rightarrow \mathcal T$ be a Teichm\"uller geodesic segment on $\mathcal T$ which joins $p=r(0)$ and $q=r(t)$.
By Lemma \ref{continuous}, the composition $\Theta \circ r:[0,t]\rightarrow (0,\pi /2)$ is continuous.
Then there is $c\in (0,t)$ such that $x=\Theta \circ r(c)$.
\end{proof}

\bibliographystyle{plain}
\bibliography{references}
\end{document}